\numberwithin{equation}{section}
\theoremstyle{plain}
\newtheorem{defin}{Definition}[section]
\newtheorem{lemma}[defin]{Lemma}
\newtheorem{prop}[defin]{Proposition}
\newtheorem{teorema}[defin]{Theorem}
\newtheorem*{teorema*}{Theorem}
\theoremstyle{definition}
\newtheorem{oss}[defin]{Remark}
\newcommand{\bd}{\partial}
\newcommand{\R}{\mathbb{R}}
\newcommand{\N}{\mathbb{N}}
\newcommand{\Z}{\mathbb{Z}}
\newcommand{\T}{\mathbb{T}}
\newcommand{\ud}{\,\textnormal{d}}
\newcommand{\dist}{\textnormal{dist}}
\newcommand{\virg}[1]{``#1''}
\let\div\undefined
\newcommand{\div}{\textnormal{div}}
\newcommand{\udH}{\ud\mathcal H^{N-1}}
\let\dist\undefined
\newcommand{\dist}{\textnormal{dist}}
\newcommand{\sd}{\textnormal{sd}}
\newcommand\rosso[1]{{\textcolor{black}{#1}}}
\begin{document}

\title{Long Time Behaviour of the Discrete Volume Preserving Mean Curvature Flow in the Flat Torus}
\author[1]{Daniele De Gennaro}
\author[2]{Anna Kubin}
\affil[1]{CEREMADE department, Université Paris IX - Dauphine, pl. du Maréchal de Lattre de Tassigny, 75775 Paris Cedex 1. Email address: degennaro@ceremade.dauphine.fr}
\affil[2]{Dipartimento di Scienze Matematiche “G.L.~Lagrange”, Politecnico di Torino, c.so Duca degli Abruzzi 24, 10129 Torino, Italy. Email address: anna.kubin@polito.it}

\date{}

\maketitle
\begin{abstract}
	\noindent We show that the discrete approximate volume preserving mean curvature flow in the flat torus $\T^N$ starting near a strictly stable critical set $E$ of the perimeter converges in the long time to a translate of $E$ exponentially fast. As an intermediate result we establish a new quantitative estimate of Alexandrov type for periodic strictly stable constant mean curvature hypersurfaces. Finally, in the two dimensional case a complete characterization of the long time behaviour of the discrete flow with  arbitrary initial sets of finite perimeter is provided.
\end{abstract}

\tableofcontents


\section{Introduction}
We consider the geometric evolution of sets called \textit{the volume preserving mean curvature flow}. The \textit{classical mean curvature flow} is defined as a flow of sets $(E_t)_{0\leq t\leq T}$ in $\R^N$ following the motion law
\begin{equation*}
	v_t=-H_{E_t}\quad \text{on} \quad \bd E_t,
\end{equation*}
where $v_t$ denotes the component of the velocity relative to the outer normal vector of $\bd E_t$  and $H_E$ is the mean curvature of the set $E$.
In order to include the volume constraint, one can consider the following velocity
\begin{equation}
	v_t= \overline{H}_{E_t}- H_{E_t}\quad \text{on} \quad \bd E_t
	\label{legge evoluzione}
\end{equation}
for all $t\in[0,T]$ , where $\overline{H}_{E_t}$ denotes the average of $H_{E_t}$ over $\bd E_t$.

The defined geometric evolution is called \textit{volume constrained mean curvature flow} or \textit{volume preserving mean curvature flow}.  One can observe that the volume of the evolving sets is indeed preserved during the evolution and that the perimeters of the sets $E_t$ are non-increasing.

This geometric flow has been used to describe some types of solidification processes and coarsening phenomena in physical systems. For example, one can consider mixtures that, after a first relaxation time, can be described by two subdomains of nearly pure phases far from equilibrium, evolving in a way to minimize the total interfacial area between the phases while keeping their volume constant (further details on the physical background can be found in \cite{CRCT,MS}, see also the introduction of \cite{MPS}).
Moreover, some variants of the volume-preserving mean curvature flow were also applied in the context of shape recovery in image processing in \cite{CD}.

One of the main mathematical difficulties of the volume preserving mean curvature flow is the non-local nature of the functional given by the constraint. Moreover, the generated flow may present singularities of different kinds, even in a finite time-span and even if the initial data is smooth. For example, we can see merging or collision of near sets, pinch-offs or shrinking of connected components to points. There exist examples of singular solutions even in the two dimensional case, see \cite{May01,MaSi}. After the onset of singularities,
the classical or smooth formulation of the flow \eqref{legge evoluzione} ceases to hold and needs to be replaced by a weaker one.
Due to the lack of a comparison principle, a natural approach is the minimizing movement approach proposed independently by Almgren, Taylor and Wang in \cite{ATW} and by Luckhaus and Sturzenhecker in \cite{LS} for the unconstrained case and adapted to the volume-preserving setting in \cite{MSS}.

We briefly recall the scheme in the volume contrained setting.
First of all we define a discrete-in-time approximation of the flow that will be called the \textit{discrete (volume-preserving) flow}. Given any initial set $E_0$ and a time-step $h>0$ we define iteratively $E_h^0:=E_0$ and for all $n\ge 0$
\[ E_h^{n+1}\in \text{argmin}\left\lbrace P(F)+\dfrac 1h \int_{F\triangle E_h^n} \dist_{\bd E_h^n}(x)\ud x\ :\ F\subset \T^N,\,|F|=|E_0| \right\rbrace,  \]
where $\dist_{\bd E_h^n}$ is the distance function from the set $\bd E_h^n$. We can define for every $t \ge 0$, the approximate flow by $E_h(t):=E_h^{[t/h]}$.
It can be proved (see \cite[Proposition~2.2]{MPS} ) that the discrete flow is well defined. 
Any limit point of this flow as the time-step $h$ converges to zero will be called a \textit{flat flow}. 
As for the classical mean curvature flow, this approach produces global-in-time solutions as shown in \cite{MSS}. The existence of such global solutions then permits to analyse the equilibrium configurations reached in the long time asymptotics.

The long time behaviour of the volume preserving mean curvature flow has been previously studied only in some particular cases, when the existence of global smooth solutions could be ensured by choosing suitably regular initial sets. For example one can consider uniformly convex and nearly spherical initial sets (see \cite{ES,Hui}), or $C^\infty-$regular initial sets that are $H^3-$close to strictly stable critical sets in the three and four dimensional flat torus (see \cite{Nii}). 
For more general initial data, the long time behaviour in the context of flat flows of convex and star-shaped sets (see \cite{BCCN, KiKU}) has been characterized only up to (possibly diverging in the case of \cite{BCCN}) translations. 
In \cite{MPS} the authors characterized the long-time limits of the discrete-in-time approximate flows constructed by the Euler implicit scheme introduced in \cite{ATW, LS} under the volume constraint in arbitrary space dimension. They proved that the discrete flow starting from an arbitrary bounded initial set converges exponentially fast to a finite union of disjoint balls with equal radii. The same authors and collaborators were also able to send the discretization parameter $h$ to 0 in the preprint~\cite{JMPS}, in the case $N=2$.
Indeed, an explicit penalization is used in order to enforce the volume constraint.

In this paper the long-time convergence analysis is developed in the flat torus $\T^N$ for the discrete flow. In such framework the class of possible long-time limits is much richer as it includes not only union of  balls with equal radii but also different type of critical sets for the perimeter. 
The notion of \textit{strictly stable critical set} is crucial to our result; for the precise definition we refer to Section~\ref{preliminaries}, but it can be summarized as a regular, critical set for the perimeter (i.e. with a constant mean curvature boundary) with strictly positive (volume-constrained) second variation. The main result of the paper is the theorem below. It provides a complete characterization of the long-time behaviour of the discrete mean curvature flow in the flat torus starting near a strictly stable critical set. Moreover, an estimate on the convergence speed is provided. 
\begin{teorema}
	Let $E$ be a strictly stable critical set in the flat torus. Then there exist $\delta^*=\delta^*(E)>0$ and $h^*=h^*(E)>0$  with the following property: if $h< h^*$ and $E_0\subset \T^N$ is a set of finite perimeter satisfying
	\[|E_0| =|E|, \qquad \overline{E}_0\subset (E)_{\delta^*},\]
	then every discrete volume  preserving mean curvature flow $(E_h^n)_{n\in\N}$ starting from $E_0$ converges to a translate of $E$ in $C^k$ for every $k\in\N$ and the convergence is exponentially fast.
	\label{teorema convergenza esponenziale}
\end{teorema}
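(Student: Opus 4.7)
My plan is to build the argument around three ingredients: the sharp regularity of the Almgren–Taylor–Wang minimizers, the new quantitative Alexandrov-type estimate announced in the abstract, and a Łojasiewicz–Simon-type inequality coming from the strict stability assumption on $E$. These are glued together by an energy-dissipation/trap mechanism: I fix a small tubular neighborhood of the orbit $\{E+\tau:\tau\in\T^N\}$ and show that, for $\delta^*,h^*$ small enough, the flow stays in this neighborhood for all iterations, the perimeter excess $a_n:=P(E_h^n)-P(E)\ge 0$ decays geometrically, and by bootstrap one obtains $C^k$ exponential convergence to a single translate of $E$.

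For the setup I exploit the minimality of $E_h^{n+1}$ in the ATW step, which yields the Euler–Lagrange equation
\[H_{E_h^{n+1}}+\tfrac{1}{h}\,\sd_{E_h^n}=\lambda_n\quad\text{on }\bd E_h^{n+1},\]
with $\lambda_n$ the Lagrange multiplier enforcing the volume constraint. Standard $\Lambda$-minimality regularity theory applied to the penalized functional yields, for $h\le h^*$ and initial data sufficiently close to $E$, uniform $C^{1,\alpha}$ bounds on $\bd E_h^{n+1}$; hence I can write $\bd E_h^{n+1}$ as the normal graph of a small function $\psi_{n+1}$ over a suitable translate $E+\tau_{n+1}$ of $E$, and elliptic bootstrap on the Euler–Lagrange equation then upgrades this to $C^k$ closeness. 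The quantitative Alexandrov-type estimate promised in the abstract provides, for any such $F$ close to $E$ in $C^{1,\alpha}$ with $|F|=|E|$,
\[\|\psi\|_{W^{2,2}}\le C\,\|H_F-\overline H_F\|_{L^2},\]
translating $L^2$-smallness of the mean curvature oscillation into strong control on the graph function.

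Next, the strict stability of $E$ delivers a Łojasiewicz–Simon inequality of the form
\[P(F)-P(E)\le C\,\|H_F-\overline H_F\|_{L^2}^2,\]
valid for every $F$ sufficiently $C^1$-close to a translate of $E$ with $|F|=|E|$: this follows from the strict positivity of the volume-constrained second variation on the $L^2$-orthogonal complement of the translational modes, combined with the invariance of perimeter and mean curvature under translations. On the other hand, the minimality of $E_h^{n+1}$ in the ATW step produces the dissipation estimate
\[P(E_h^{n+1})-P(E_h^n)\le -\tfrac h2\,\|v_h^{n+1}\|_{L^2}^2,\qquad v_h^{n+1}:=\overline H_{E_h^{n+1}}-H_{E_h^{n+1}},\]
where the fact that $\lambda_n$ coincides with $\overline H_{E_h^{n+1}}$ up to a negligible term is forced by volume preservation. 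Combining these two inequalities yields $a_{n+1}\le(1-ch)\,a_n$, hence $a_n\le(1-ch)^n a_0$: geometric decay of the perimeter excess.

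Finally, I upgrade this to $C^k$ exponential convergence to a single translate. The estimates above give $\|\psi_{n+1}\|_{W^{2,2}}^2\le C(a_n-a_{n+1})/h$, so the $W^{2,2}$ norms of the graph functions decay geometrically and the shifts $\tau_n$ (chosen, for instance, as minimizers of $\tau\mapsto|E_h^n\triangle(E+\tau)|$) form a Cauchy sequence with geometric rate and converge to some $\tau_\infty$; bootstrapping the Euler–Lagrange equation with the uniform $C^{1,\alpha}$ bounds upgrades $W^{2,2}$ decay to $C^k$ decay for every $k$. The main obstacle I anticipate, and the reason for the smallness thresholds $\delta^*,h^*$, is the trapping step: I must verify that the trajectory never escapes the tubular neighborhood where all of the above local estimates hold. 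This is handled by a continuation/induction argument in which the accumulated drift up to step $n$ is controlled by $\sqrt{a_0}$, hence by $\sqrt{\delta^*}$; choosing $\delta^*$ small enough makes this drift smaller than $\delta^*/2$, and the induction closes.
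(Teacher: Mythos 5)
Your plan for the exponential-decay part is sound and in fact a clean variant of the paper's: where the paper derives a dissipation-dissipation inequality $\mathcal D(E,E_{f_2})\le C\mathcal D(E_{f_2},E_{f_1})$ (Lemma \ref{lemma 3.5}), tests minimality against a translate $\mathscr E_n=E-\tau_{n-1}-\sigma_{n-1}$, and invokes a combinatorial summation lemma, you compress this into $a_{n+1}\le(1-ch)\,a_n$ directly by combining the dissipation bound $P(E_h^n)-P(E_h^{n+1})\ge\tfrac{(1-\eta)h}{2}\|H_{E_h^{n+1}}-\overline H_{E_h^{n+1}}\|_{L^2}^2$ with a Łojasiewicz--Simon inequality $P(E_h^{n+1})-P(E)\le C\|H_{E_h^{n+1}}-\overline H_{E_h^{n+1}}\|_{L^2}^2$. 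Both routes are anchored in Theorem \ref{teo alex}, and either works once one is entitled to write the iterates as small $C^1$-normal graphs over translates of $E$.

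The genuine gap is precisely at that entitlement. You assert that ``standard $\Lambda$-minimality regularity theory\ldots yields, for $h\le h^*$ and initial data sufficiently close to $E$, uniform $C^{1,\alpha}$ bounds'' and hence a normal-graph representation of $E_h^{n+1}$ over some $E+\tau_{n+1}$. But $\Lambda$-minimality alone only gives interior regularity; it does not tell you that $E_h^1$ is $L^1$-close to $E$, and without $L^1$-closeness you cannot invoke the $\Lambda$-minimizer improvement-of-convergence (Theorem \ref{teorema convergenza C^1}) to obtain the $C^{1,\alpha}$-graph structure that your induction lives on. The initial datum $E_0$ is merely of finite perimeter and Hausdorff-close to $E$; it is neither a $\Lambda$-minimizer nor a graph over $E$. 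Closing this gap is exactly the content of Lemma \ref{L^inf estimate} and Proposition \ref{uniform L1 estimate}: one must first prove that $E$ is the \emph{unique} volume-constrained minimizer of $\tilde J_h(F)=P(F)+h^{-1}\int_F\dist_E$ for small $h$ (a consequence of strict stability via Theorem \ref{coroll 1.2}), and then run a compactness/$\Gamma$-convergence argument, crucially using the Hausdorff closeness $\overline E_0\subset(E)_{\delta^*}$ to control the limit of $\sd_{E_0}$. The counterexample of Proposition \ref{first counterexample} shows this step cannot be dispensed with: there are $E_n\to E$ in $L^1$ with $P(E_n)\to P(E)$ for which the one-step minimizers converge to a set far from $E$, because $\bd E_n$ scatters all over $\T^N$. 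So neither small $a_0$ nor $L^1$-closeness of $E_0$ triggers your trap; the Hausdorff-closeness hypothesis enters in an essential, non-quantitative way that your sketch omits.

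Two smaller points. Theorem \ref{teo alex} gives $\|f\|_{H^1(\bd E)}\le C\|H_{E_f}-\overline H_{E_f}\|_{L^2}$, not a $W^{2,2}$ bound; the $W^{2,2}$ decay you want in Step~3 should instead be obtained a posteriori by interpolating the exponentially decaying $L^2$ norms against uniform higher-order bounds (Gagliardo--Nirenberg), as in Step~3 of the paper's proof. And the assertion that the shifts $\tau_n$ are Cauchy with geometric rate does not follow from $\|\psi_n\|\to 0$ alone: you need the barycenter/dissipation estimate \eqref{stima baricentri}, $|\mathrm{bar}(E_{f_1})-\mathrm{bar}(E_{f_2})|^2\lesssim\mathcal D(E_{f_1},E_{f_2})$, together with the geometric decay of the dissipations, which is how the paper pins down the limiting translate.

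Finally, note that Theorem \ref{teo alex} carries the hypotheses $|\int_{\bd E}f\,\nu_E|\le\delta\|f\|_{L^2}$, which is not automatic for the optimal $L^1$-minimizing translate; the paper inserts an auxiliary shift $\sigma_n$ from Lemma \ref{lemma 3.8_AFM} before applying the Alexandrov estimate, a normalization your sketch leaves implicit.
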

We would like to give some details to highlight the major differences between our results and the analysis carried out in \cite{Nii}. In the aforementioned work, the author studied the flat flow, albeit in low dimension ($N\le 4$). In the article, it was assumed the initial set to be a $C^\infty-$deformation of a strictly stable critical set, close in the $H^3-$sense to the latter set. 
Under these assumptions, it was proved the exponential convergence of the flat flow to a translated of the strictly stable critical set. We remark that our result addresses the long time behaviour of the discrete flow but holds in much weaker hypotheses: we only assume the initial set to be of finite perimeter and close in the Hausdorff sense to a strictly stable critical set. Moreover, our result holds in every dimension and we are also able to provide the complete characterization of the long-time behaviour starting from any initial set in dimension $N=2$. In order to state the precise result in the two-dimensional case we first introduce the following notation. 

We will call \rosso{\textit{lamella} any connected set in $\T^2$ whose $1-$periodic extension in $R^2$ is a  stripe bounded by two parallel lines.} Our final result in two dimension is the following theorem.
\begin{teorema}
	Fix $h$, $m>0$ and an initial set $ E_0 \subset \T^2$ with finite perimeter and such that $|E_0|=m$.
	Let $(E_h^n)_{n \in \N}$ be a discrete flow starting from $E_0$ and let $P_\infty$ be the limit of the non-increasing sequence $P(E_h^n)$. Then either one of the following holds: 
	\begin{itemize}
		\item[i)] $(E_h^n)_{n \in \N}$ converges to a disjoint union of $l$ discs of equal radii and total area $m$, where $l=\pi^{-1} (4m)^{-1}P_{\infty}^2 \in \N$;
		\item[ii)] $\left((E_h^n)^c\right)_{n \in \N}$ converges to a disjoint union of $l$ discs of equal radii and total area $1-m$, where $l=\pi^{-1} (4-4m)^{-1}P_{\infty}^2 \in \N$;
        \item[iii)]\rosso{
		$(E_h^n)_{n \in \N}$ converges to a disjoint union of $l$ lamellae of total area $m$, with the same slope and $l\le P_{\infty}/2$.
        Moreover, the equality $l=P_{\infty}/2 \in \N$ holds if and only if the limit is given by vertical or horizontal lamellae.}
	\end{itemize}
	In all cases the convergence is exponentially fast in $C^k$ for every $k \in N$.
	\label{teorema convergenza dimensione 2}
\end{teorema}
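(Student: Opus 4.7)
My plan is to identify the possible long-time limits of the discrete flow by extracting a subsequential $L^1$ limit, exploit the classification of constant mean curvature sets in $\T^2$ to pin down its shape, and then upgrade subsequential convergence to full exponential $C^k$-convergence by invoking Theorem~\ref{teorema convergenza esponenziale}. First, taking $F=E_h^n$ as a competitor in the step-wise minimization gives $P(E_h^{n+1})\le P(E_h^n)$ along with the summable dissipation bound $\sum_n h^{-1}\int_{E_h^{n+1}\triangle E_h^n}\dist_{\bd E_h^n}\ud x\le P(E_0)$. From the uniform BV bound I extract (up to subsequence) an $L^1$-limit $E_\infty$ with $|E_\infty|=m$ and $P(E_\infty)\le P_\infty$. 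The Euler--Lagrange equation for the scheme, $H_{E_h^{n+1}}+h^{-1}\sd_{\bd E_h^n}=\lambda_{n+1}$ on $\bd E_h^{n+1}$, combined with the vanishing-dissipation property, then forces $H_{E_\infty}$ to be constant on $\bd E_\infty$, so $E_\infty$ is a volume-constrained critical set of the perimeter.

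Next, I would invoke the classical classification of critical sets of the perimeter in $\T^2$ under a volume constraint: since in dimension two the only connected smooth curves of constant geodesic curvature are round circles and closed geodesics, and since all boundary components must share the same Lagrange multiplier, such a critical set is either a finite disjoint union of equal-radii discs, or a finite disjoint union of same-slope lamellae (necessarily of rational slope, since otherwise they would not close up in $\T^2$), or the complement of one of these. This directly yields the trichotomy (i)--(iii), and the counting of $l$ reduces to a perimeter/area calculation: equal-radii discs of total area $m$ have perimeter $2\sqrt{\pi m l}$, giving $l=P_\infty^2/(4\pi m)$, whereas $l$ lamellae of primitive rational slope $(p,q)$ have perimeter $2l\sqrt{p^2+q^2}\ge 2l$, with equality characterising horizontal or vertical configurations.

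The hardest step is promoting subsequential $L^1$ convergence to full $C^k$-convergence with an exponential rate. I would verify that each candidate limit $E_\infty$ is strictly stable in the sense of Section~\ref{preliminaries} (modulo the natural finite-dimensional space of translation-type null directions coming from the components): for a union of equal-radii discs this follows from local isoperimetric rigidity in $\T^2$, since any redistribution of the masses among the components strictly increases the perimeter at fixed total volume in the relevant regime, while for parallel lamellae the volume-constrained second variation reduces to the Dirichlet energy of mean-zero boundary perturbations, which is strictly positive. The uniform $C^{1,\alpha}$ regularity of the $E_h^n$ coming from the $\Lambda$-minimality built into the scheme then upgrades $L^1$-closeness to Hausdorff closeness; hence, for some sufficiently large $n_k$, the set $E_h^{n_k}$ lies in the $\delta^*(E_\infty)$-neighbourhood of Theorem~\ref{teorema convergenza esponenziale}, whose application from that step yields exponential $C^k$-convergence of the whole tail to a translate of $E_\infty$. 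I expect the main technical obstacles to be the strict stability verification for configurations with multiple components (where the null space of the second variation is strictly larger than the translation group of the ambient torus) and, relatedly, ruling out that the number $l$ of components differs along different subsequences --- which I would handle by combining the dissipation estimate with the quasi-minimality of $E_h^n$ to exclude sudden creation or annihilation of components in the long-time asymptotics.
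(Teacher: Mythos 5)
Your overall architecture (extract a subsequential limit, classify it among unions of discs/lamellae/complements via the constant-curvature constraint, then promote to full exponential $C^k$ convergence) matches the paper's strategy at a high level, and your perimeter counting for $l$ in the disc and lamella cases is correct. But the step where you ``invoke Theorem~\ref{teorema convergenza esponenziale}'' has a genuine gap, and it is exactly the hard point that the paper handles differently.

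Theorem~\ref{teorema convergenza esponenziale} requires the target set $E$ to be strictly stable in the sense of Definition~\ref{definizione strictly stable set}: the second variation must be positive on $T^\perp(\bd E)$, the $L^2$-orthogonal complement in $\tilde H^1(\bd E)$ of the span of $\nu_1,\nu_2$ (a subspace of dimension at most $N=2$). When the limit $E_\infty$ has $l\ge 2$ connected components, translating each component independently (perpendicular to its slope for lamellae, in any direction for discs) produces a null space of the volume-constrained second variation of dimension $l$ (lamellae) or $2l$ (discs), which strictly contains the $\le 2$-dimensional translation directions coming from the ambient torus. Consequently $E_\infty$ is \emph{not} strictly stable per Definition~\ref{definizione strictly stable set}, and Theorem~\ref{teorema convergenza esponenziale} simply does not apply to it. Your parenthetical ``modulo the natural finite-dimensional space of translation-type null directions coming from the components'' names the difficulty but does not resolve it: modding out by per-component translations is a genuinely different hypothesis than the one under which the theorem was proved, and several of its ingredients (Lemma~\ref{lemma 3.8_AFM}, Theorem~\ref{teo alex}, Lemma~\ref{lemma 3.5}) are formulated for a single strictly stable set with only the ambient translation kernel.

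The paper's way out is to exploit that the \emph{individual connected components} of the limit are strictly stable, together with the uniform positive Hausdorff separation between components (Proposition~\ref{prop regularity discrete flow}(v) and Remark~\ref{remark scrittura lamelle}). It then localizes the dissipation $\mathcal D(E_n,E_{n-1})=\sum_i \mathcal D(E_{i,n},E_{i,n-1})$ component-by-component, applies the dissipation--dissipation inequality (Lemma~\ref{lemma 3.5}, which rests on Theorem~\ref{teo alex}) to each component against a single-component competitor $F_{i,n-1}$, and sums, obtaining $P(E_n)-P_\infty\le (C/h)\,\mathcal D(E_{n-1},E_{n-2})$; the conclusion then follows from Lemma~\ref{lemma 3.10} as in the proof of Theorem~\ref{teorema convergenza esponenziale}. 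This per-component localization is the content you would need to supply in place of the direct invocation of Theorem~\ref{teorema convergenza esponenziale}. Relatedly, the uniqueness of the limit's \emph{type} along the whole flow (disc vs.\ lamella vs.\ complement-of-disc, and the fixed slope of the lamellae) is established in the paper separately (Lemma~\ref{limite unico}) by showing via Lemma~\ref{lemma 3.5n} that consecutive $E_n$, $E_{n-1}$ become Hausdorff-close while the distinct candidate configurations are at uniformly positive Hausdorff distance; your plan, which would deduce this for free from Theorem~\ref{teorema convergenza esponenziale}, also inherits the gap above.
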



\subsection{Comments about the proof of the main results}

The first step towards proving our main result Theorem \ref{teorema convergenza esponenziale} is Proposition \ref{teorema convergenza a meno di traslazioni}. More precisely, we prove the convergence up to translations of any discrete flow, starting Hausdorff-close to a strictly stable critical set $E$, to the latter set.  Such a convergence holds in the $C^k-$norm for every $k\in\N$. Since at this point we can not rule out that different subsequences of the discrete flow may converge to different translates of $E$, the subsequent step consists in proving the convergence of the whole flow to a unique translate of the set $E$ (with exponential rate).

In order to prove Proposition \ref{teorema convergenza a meno di traslazioni} we first show (see Step 1 of the proof of the aforementioned proposition) that every long-time limit of the flow is a critical point of the perimeter. When the ambient space is $\R^N$, this implies that the limit points can only be balls or finite union of balls with the same radii. However, in the periodic setting, we may end up with different critical points of the perimeter. Indeed, already in the three dimensional torus $\T^3$ we find a wealth of different critical points in addition to balls: for example,  lamellae, cylinders and gyroids (see Figure~\ref{figure stable sets}) .
\begin{figure}
	\centering
	\includegraphics[scale=0.2]{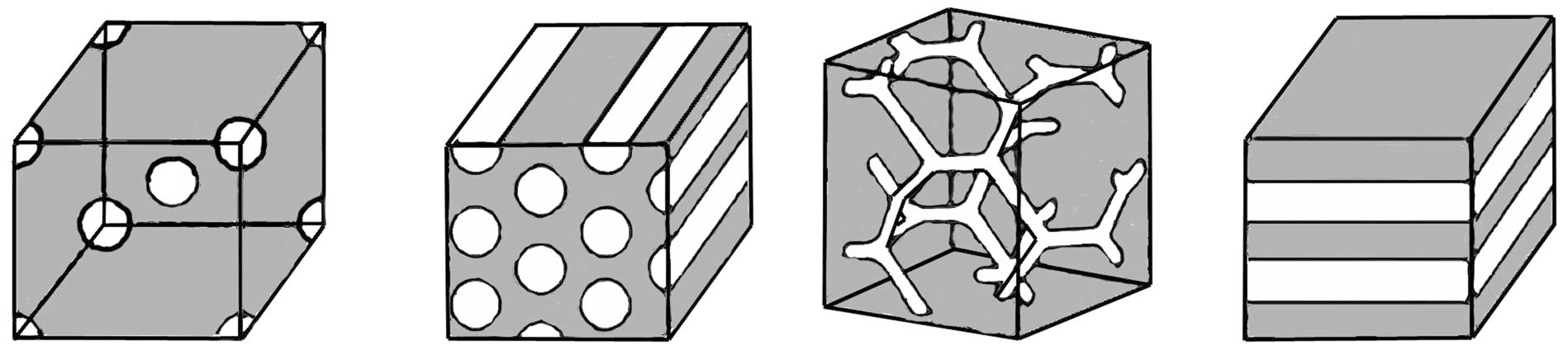}
	\caption{The critical points in $\T^3$. Balls, cylinders, gyroids and $3-$dimensional lamellae.}
	\label{figure stable sets}
\end{figure}
We then exploit the strict stability of $E$ (Proposition \ref{uniform L1 estimate}) to ensure that the flow remains $L^1$-close up to translations to the set $E$. To conclude, a regularity argument shows that the convergence in $L^1$ of the flow to a regular stable set implies the convergence in $C^k$ for every $k\in\N$, thus proving Proposition \ref{teorema convergenza a meno di traslazioni}.

The proof of Proposition \ref{uniform L1 estimate} is based on the following idea: from a stability result in \cite{AFM}, one can estimate the $L^1-$distance (up to translations) of a set $F$ from a strictly stable critical set $E$ in terms of the differences of the perimeters, provided that the $L^1-$distance between $E$ and $F$ remains below a certain threshold. Moreover, a counterexample shows that the Hausdorff-closeness assumption can not be weakened to $L^1-$closeness, as  we will discuss in details in Subsection \ref{sec_contr}.

In order to establish the uniqueness of the limit and, therefore, the main Theorem~\ref{teorema convergenza esponenziale}, Section~\ref{sec convergence} is devoted to proving the convergence of the barycenters of the evolving sets. A crucial intermediate  result consists in generalizing the Alexandrov-type estimate \cite[Theorem~1.3]{MPS} (see also \cite{KM}) to the flat torus. This result provides a stability inequality for $C^1-$normal deformations of strictly stable critical sets in the periodic setting. It could also be seen as an higher-order  \L ojasiewicz-Simon inequality for the perimeter functional.
We briefly give some definitions to present some further details.  Given a set $E$ of class $C^1$ and a function $f:\bd E\to \R$ such that $\|f\|_{L^{\infty}(\bd E)}$ is sufficiently small, the\textit{ normal deformation} $E_f$ of the set $E$ is defined as
\[ \bd E_f:=\{ x+f(x) \nu_E(x)\ :\ x\in\bd E  \}, \]
where $\nu_E$ is the normal outer vector of $E$. A normal deformation $E_f$ is said to be of class $C^k$ if \rosso{$E$ is of class $C^k$ and} $f\in C^k(\bd E)$. The result proved in \cite{MPS} is the following.

\begin{teorema*}
	There exist $\delta \in (0,1/2)$ and $C>0$ with the following property: for any $f \in C^1(\bd B) \cap H^2(\bd B)$ such that $\|f\|_{C^1(\bd B)} \le \delta$, $|E_f|=\omega_N$ and $\textnormal{bar}(E_f)=0$, we have
	\[\|f\|_{H^1(\bd B)} \le C\|H_{E_f}-\overline H_{E_f}\|_{L^2(\bd B)} .\]
\end{teorema*}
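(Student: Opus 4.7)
The plan is to reduce the inequality to a spectral estimate for the Jacobi operator on $\bd B$, by Taylor-expanding the mean curvature of the normal graph $x\mapsto x+f(x)\nu_B(x)$, and then to use the two scalar constraints (volume and barycenter) to project $f$ away from the kernel of that operator. The nonlinear remainder, controlled by $\|f\|_{C^1}\le\delta$, is absorbed for $\delta$ small via elliptic regularity.

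A direct second-variation computation gives
\[ H_{E_f}\circ(\mathrm{Id}+f\nu_B) = (N-1) - J f + R[f], \qquad J:=\Delta_{\bd B} + (N-1), \]
with $R[f]$ at least quadratic in $(f,\nabla f)$ and linear in $\nabla^2 f$, whence $\|R[f]\|_{L^2(\bd B)}\le C\|f\|_{C^1(\bd B)}\|f\|_{H^2(\bd B)}$. Averaging over $\bd E_f$ and transferring between the two Hausdorff measures yields, setting $\tilde f:=f-\bar f$,
\[ H_{E_f}-\bar H_{E_f} = -J\tilde f + R'[f], \qquad \|R'[f]\|_{L^2(\bd B)}\le C\|f\|_{C^1}\|f\|_{H^2}. \]
Decomposing $f=\sum_{k\ge 0}f_k$ in spherical harmonics with $-\Delta_{\bd B}f_k=\lambda_k f_k$ and $\lambda_k=k(k+N-2)$, the operator $J$ acts on the $k$-th eigenspace by the scalar $(N-1)-\lambda_k$: its kernel is precisely the direct sum of the modes $k=0$ and $k=1$, and for $k\ge 2$ one has $\lambda_k-(N-1)\ge N+1$ together with $(\lambda_k-(N-1))^2\ge c(1+\lambda_k)$, giving the spectral gap
\[ \|J\tilde f\|_{L^2(\bd B)}^2 \ge c\,\|P_{\ge 2}f\|_{H^1(\bd B)}^2. \]

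Expanding the two constraints to quadratic order kills the kernel modes. The volume condition $|E_f|=\omega_N$ gives $|\hat f_0|\le C\|f\|_{L^2}^2$, while $\mathrm{bar}(E_f)=0$ gives $|\hat f_{1,i}|\le C\|f\|_{L^\infty}\|f\|_{L^2}$; combined with $\|f\|_{C^1}\le\delta$ these produce $\|P_0 f\|_{H^1}^2+\|P_1 f\|_{H^1}^2\le C\delta^2\|f\|_{H^1}^2$. Summing the contributions from all spherical-harmonic modes gives
\[ (1-C\delta^2)\|f\|_{H^1(\bd B)}^2 \le C\|H_{E_f}-\bar H_{E_f}\|_{L^2(\bd B)}^2 + C\|R'[f]\|_{L^2(\bd B)}^2. \]

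The main obstacle is that $\|R'[f]\|_{L^2}\le C\delta\|f\|_{H^2}$ involves an $H^2$ norm not present in the statement. To close, I would bootstrap using elliptic regularity: since $J$ restricts to an isomorphism from $H^2\cap(\ker J)^\perp$ onto $L^2\cap(\ker J)^\perp$, the identity $JP_{\ge 2}f=-(H_{E_f}-\bar H_{E_f})-R'[f]-JP_{\le 1}f$ yields
\[ \|f\|_{H^2(\bd B)}\le C\bigl(\|H_{E_f}-\bar H_{E_f}\|_{L^2}+\delta\|f\|_{H^2}+\delta\|f\|_{H^1}\bigr), \]
where the last piece follows from the finite-dimensional norm equivalence on $\ker J$ together with the constraint bounds. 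Absorbing the middle term for $\delta$ small gives an $H^2$ bound, which substituted back in the preceding display and combined with one further absorption produces the claimed $H^1$ inequality. A cleaner variant, which avoids tracking $\delta$ explicitly, is a contradiction--compactness argument on sequences $\delta_n\to 0$, $f_n$ violating the claim, rescaled as $g_n:=f_n/\|f_n\|_{H^1}$: the bootstrap makes $\{g_n\}$ bounded in $H^2$, so a weak $H^2$ limit $g_\infty$ exists and solves $Jg_\infty=0$ together with $\int g_\infty\,\udH=0$ and $\int x_i g_\infty\,\udH=0$ (inherited from the constraints); the spectral picture then forces $g_\infty\equiv 0$, contradicting $\|g_\infty\|_{H^1}=\lim\|g_n\|_{H^1}=1$ obtained from the compact embedding $H^2\hookrightarrow H^1$.
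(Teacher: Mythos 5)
Your proposal is correct, but it takes a genuinely different route from the paper's (the paper cites this theorem from \cite{MPS} and, when generalising it to the torus in Theorem~\ref{teo alex}, follows the same method as \cite{MPS}). The paper never writes a pointwise linearisation of $H_{E_f}$: it compares two expressions for the \emph{weak} first variation $\delta P(E_f)[\varphi]$ — one coming from the definition $\delta P(E_f)[\varphi]=\int_{\partial E}H_{E_f}\varphi\,\prod(1+\kappa_i f)\,\mathrm{d}\mathcal H^{N-1}$ and one computed by differentiating the explicit area formula — and then tests the resulting identity with $\varphi=1$ and $\varphi=f$. This produces an identity of the form $\int_{\partial E}(|\nabla f|^2-|B_E|^2f^2)=\int(H_{E_f}-\overline H_{E_f})f+O(\delta\|f\|_{H^1}^2)$ whose error is controlled purely by $\|f\|_{H^1}$, so no $H^2$ norm ever enters, and the strict stability (Lemma~\ref{lemma 1.4}) plus an absorption closes the argument in one pass. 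You instead expand $H_{E_f}$ to strong form as $(N-1)-Jf+R[f]$, which inevitably places $\nabla^2 f$ in the remainder $R$ and forces the intermediate estimate $\|R'[f]\|_{L^2}\le C\delta\|f\|_{H^2}$; this is why you need the elliptic bootstrap on $\|f\|_{H^2}$ before the $H^1$ inequality can be extracted. Both arguments are sound — your bootstrap closes correctly once $C\delta<1/2$, your spectral gap $(\lambda_k-(N-1))^2\gtrsim 1+\lambda_k$ for $k\ge 2$ is right, and your handling of the constraints to kill the $k\le 1$ modes is the standard one — but the cost of your route is the extra bootstrap step, while its benefit is that the spherical-harmonic picture on $\partial B$ makes the kernel of $J$ and the spectral gap completely explicit. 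The paper's test-function argument, in contrast, uses no spectral decomposition at all, which is precisely what allows it to extend verbatim to arbitrary strictly stable critical sets in $\mathbb T^N$ (lamellae, cylinders, gyroids, …) where no explicit eigenbasis is available; your Fourier-mode decomposition is specific to the sphere and would not generalise in the same way (though your terminal compactness–contradiction variant would, replacing the explicit kernel by $\ker\,\delta^2 P(E)\cap\widetilde{H}^1$).
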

We are able to show that in the periodic setting the above quantitative estimate holds with $B$ replaced by any strictly stable critical set. More precisely, we have the following:
\begin{teorema}
	\label{teo alex}
	Let $E \subset \T^N$ be a strictly stable critical set.
	There exist $\delta \in(0,1/2)$ and $C>0$ with the following property: for any $f\in C^1(\bd E)\cap H^2(\bd E)$ such that $\|f\|_{C^1(\bd E)}\leq \delta$ and satisfying
	\begin{equation}
		\left |\int_{\bd E} f\ud \mathcal{H}^{N-1}\right |\le \delta \|f\|_{L^2(\bd E)},\qquad \left |\int_{\bd E} f\nu_E\ud \mathcal H^{N-1}\right|\leq \delta \|f\|_{L^2(\bd E)},
		\label{ipotesi alex}
	\end{equation}
	we have 
	\begin{equation}
		\|f\|_{H^1(\bd E)}\leq C\|H_{E_f}-\overline H_{E_f}\|_{L^2(\bd E)}.
		\label{e:aleq_gen}
	\end{equation}
\end{teorema}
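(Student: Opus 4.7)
The plan is to extend the strategy used in \cite{MPS} for the ball to an arbitrary strictly stable critical set $E \subset \T^N$, combining the coercivity of the second variation quadratic form (afforded by strict stability) with a linearization of the mean curvature operator around $f=0$.

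First I would convert the strict stability of $E$ into a spectral/coercivity statement. With $J_E := -\Delta_{\bd E} - |B_{\bd E}|^2$ the Jacobi operator on $\bd E$, strict stability asserts that the quadratic form
\[\Pi_E[g,g] := \int_{\bd E}\bigl(|\nabla_{\tau} g|^2 - |B_{\bd E}|^2 g^2\bigr)\ud\mathcal H^{N-1}\]
is strictly positive on the admissible subspace
\[\mathcal A := \Bigl\{g \in H^1(\bd E)\,:\, \int_{\bd E} g\ud\mathcal H^{N-1} = 0,\ \int_{\bd E} g\,\nu_E\ud\mathcal H^{N-1} = 0\Bigr\}.\]
Since $J_E$ is self-adjoint elliptic with compact resolvent, this upgrades to the coercivity bound $\Pi_E[g,g] \ge c_0\|g\|_{H^1(\bd E)}^2$ for every $g \in \mathcal A$, with $c_0 > 0$ depending only on $E$.

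Second, I would reduce the general $f$ appearing in the statement to an admissible direction and exploit the linearization of the mean curvature. Writing $f = f_0 + g$ with $f_0$ the $L^2(\bd E)$-orthogonal projection of $f$ onto $\mathrm{span}\{1,\nu_E\cdot e_1,\ldots,\nu_E\cdot e_N\}$ and $g \in \mathcal A$, the hypotheses \eqref{ipotesi alex} yield precisely $\|f_0\|_{L^2(\bd E)} \le C\delta\|f\|_{L^2(\bd E)}$. On the other hand, a standard Taylor expansion of $H_{E_f}$ around $f=0$ gives
\[H_{E_f} = H_E + J_E[f] + \mathcal R(f),\]
with a remainder $\mathcal R(f)$ controlled in $L^2$ by $C\|f\|_{C^1(\bd E)}$ times lower-order norms of $f$. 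Testing $H_{E_f} - \overline H_{E_f}$ against $g$, integrating by parts, and using the smallness of $f_0$ together with $|\overline H_{E_f} - H_E| \le C\|f\|_{C^1(\bd E)}$, I would obtain
\[\Pi_E[g,g] \le \|H_{E_f}-\overline H_{E_f}\|_{L^2(\bd E)}\|g\|_{L^2(\bd E)} + C\delta\|f\|_{H^1(\bd E)}^2.\]
Combining this with the coercivity of Step 1, choosing $\delta$ small enough to absorb the error term, and noting that $\|f\|_{H^1(\bd E)} \simeq \|g\|_{H^1(\bd E)}$ (since $f_0$ is small and lies in a fixed finite-dimensional subspace), one concludes \eqref{e:aleq_gen}.

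The main obstacle is the careful handling of the kernel of the second variation. Unlike the ball case, where this kernel is exactly the span of the $N$ linearly independent translations $\nu_{\bd B}\cdot e_i$ and the barycenter condition neutralises them cleanly, for a general strictly stable critical set $E \subset \T^N$ the functions $\nu_E\cdot e_i$ may be linearly dependent (consider lamellae or cylinders, where only the translations transverse to $E$ generate a nontrivial kernel). One must therefore identify which of the quantities $\int_{\bd E}f\,\nu_E\cdot e_i \ud\mathcal H^{N-1}$ genuinely control the kernel of $\Pi_E$ and verify that \eqref{ipotesi alex} is sharp enough in every such configuration. A further technical subtlety is that the remainder $\mathcal R(f)$ involves second tangential derivatives of $f$ and must be reshaped by integration by parts on $\bd E$ so as to produce only first-order quantities controllable by $\delta\|f\|_{H^1(\bd E)}^2$, since otherwise the absorption in the final step would fail.
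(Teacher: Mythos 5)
Your proposal follows essentially the same strategy as the paper: coercivity of the second variation on the subspace orthogonal to the kernel directions, a weak (divergence-form) linearization of $H_{E_f}$ so that only first-order quantities appear after testing, and an absorption argument in the end. The only technical deviation is how the finite-dimensional degeneracy is removed: you project $f$ off $\mathrm{span}\{1,\nu_E\cdot e_1,\dots,\nu_E\cdot e_N\}$ and carry the small remainder $f_0$, whereas the paper subtracts only the mean and invokes Lemma~\ref{Lemma_3.6_AFM_migl} (coercivity under an approximate rather than exact $\nu_E$-orthogonality constraint). These are interchangeable; both lead to Lemma~\ref{lemma 1.4} and then the final estimate. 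One remark on the obstacle you flag at the end: the possible linear dependence among the $\nu_E\cdot e_i$ is not actually an issue, because strict stability and the hypotheses~\eqref{ipotesi alex} are both phrased in terms of the same family $\{1,\nu_E\cdot e_i\}$; if some $\nu_E\cdot e_i$ vanish identically (as for lamellae or cylinders) the corresponding constraints are vacuous on both sides, so no gap opens up.
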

We will prove in details in Section \ref{section Alexandrof} that the conditions \eqref{ipotesi alex} have a geometric explanation. Indeed, the first one ensures that $|E_f|\approx|E|$, up to higher-order error terms, and the second one, \rosso{for some choices of $E$, is implied by imposing $\text{bar}(E_f) \approx\text{bar}(E)$.} 
We finally remark that the estimate \eqref{e:aleq_gen} is optimal for what concerns the power of the norms, see \cite[Remark 1.5]{MPS}.

The last section of the paper is devoted to the two-dimensional case. This particular choice of the dimension is purely technical and it is motivated by the availability of a complete characterization of the critical points of the perimeter in the two-dimensional flat torus. In this setting we are able to prove the exponential convergence of the flow starting from any initial set to either a finite union of balls or a finite union of lamellae or the complement of these configurations.

\subsection*{Acknowledgements}
The authors wish to sincerely thank Professor  Massimiliano Morini for the support provided during the preparation of the paper and for the helpful discussions. We also wish to thank the anonymous referee for the comments which helped to improve the manuscript.
D. De Gennaro has received funding from the European Union’s Horizon 2020 research and innovation program under the Marie Skłodowska-Curie grant agreement No 94532. \includegraphics[scale=0.01]{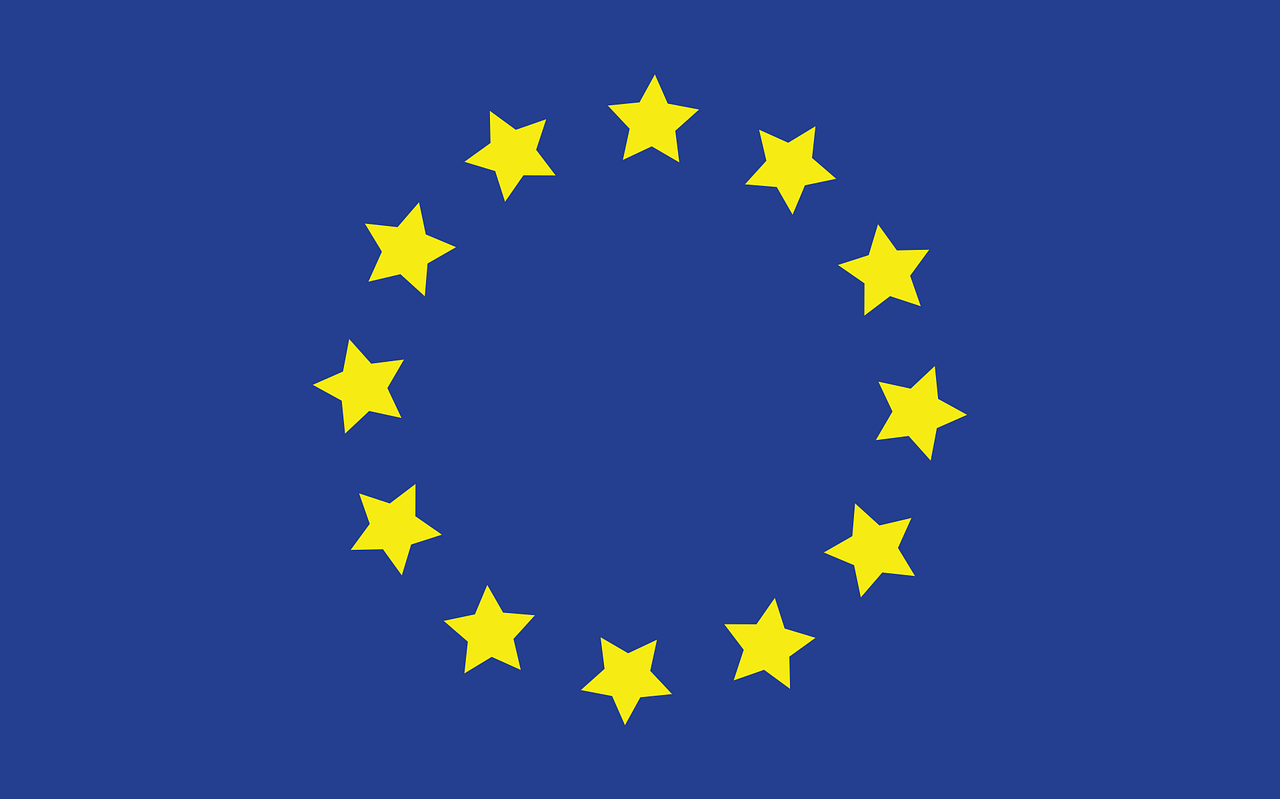}


\section{Preliminaries}
\label{preliminaries}

Let $ \T^N:=\R^N/\mathbb Z^N $ be the $N-$dimensional torus, that is the quotient space $\R^N/ \sim$ where $\sim$ is the equivalence relation given by $ x\sim y $ if and only if $ x-y\in \mathbb Z^N$. We can define the distance between two points $x,y\in\T^N$ simply by 
\[ \dist(x,y)=\min_{z\in\Z^N}|(x+z)-y|. \]
The definition of functional spaces on the torus is straightforward: for example, $W^{k,p}(\T^N)$ can be identified with the subspace of $W^{k,p}_{loc}(\R^N)$ of functions that are one-periodic with respect to all coordinate directions.  When we need to be specific about functions on the torus, it is often convenient to give coordinates to $\T^N$ via the unit cube  $Q=[0,1)^N$.

Firstly, we recall the definition of functions of bounded variation in the periodic setting. We say that a function $u\in L^1(\T^N)$ is of bounded variation if its total variation is finite, that is
\[ |Du|=\sup\left\lbrace \int_{\T^N} u\,\div \varphi \ud x : \varphi\in \rosso{C^1}(\T^N; \R^N),\ |\varphi|\le 1 \right\rbrace <+\infty.\]
We denote the space of such functions by $BV(\T^N)$. We say that a measurable set $E\subset\T^N$ is of finite perimeter in $\T^N$ if its characteristic function $\chi_E\in BV(\T^N)$. The perimeter $P(E)$ of $E$ in $\T^N$ is nothing but the total variation $|D\chi_E|(\T^N)$. We refer to Maggi's book \cite{Mag} for a more complete reference about sets of finite perimeter and their properties.

We recall the following notation.
\begin{defin}
	Let $E$ be a set of class $C^1$.
	Given a function $f:\bd E \to \R$ such that $\|f\|_{L^{\infty}(\bd E)}$ is sufficiently small, we set
	\begin{equation}
		\label{normal_deformation}
		\bd E_f:=\{ x+f(x)\nu_E(x) : x\in \bd E \}
	\end{equation}
	and we call $E_f$ the \textit{normal deformation} of $E$ induced by $f$.
\end{defin}
With a slight abuse of notation, we give the following definition. 
\begin{defin}
	Let $E$ be a set of class $C^1$.
	Let $X(\bd E)$ denote a functional space that can either be $L^p(\bd E)$, $W^{k,p}(\bd E)$, $C^{k,\alpha}(\bd E)$ for some $k\in \N$, $p \ge 1$ and $\alpha \in [0,1]$. For any $F=E_f$ with $f\in  X(\bd E)$, we set  
	\[ \dist_X(F,E)=\|f\|_{X(\bd E)}. \]
\end{defin}
We recall the classical definition of $C^{1,\alpha}-$convergence of sets.
\begin{defin}
	\label{definizione convergenza C^1}
	Given $\alpha\in [0,1]$, a sequence $(E_n)_{n\in\N}$ of $C^{1,\alpha}-$regular sets is said to converge in $C^{1,\alpha}$ to a set $E$ if:
	\begin{itemize}
		\item for any $x\in \bd E$, up to rotations and relabelling the coordinates, we can find a cylinder $C=B'\times (-1,1)$, where $B'\subset \R^{N-1}$ is the unit ball centred at the origin, and functions $f,f_n\in C^{1,\beta}(B';(-1,1))$ such that for $n$ large enough, it holds
		\begin{align*}
			(E-x)\cap C&=\{ (x',x_N)\in B'\times (-1,1)\ :\ x_N\le f (x')  \}\\
			(E_n-x)\cap C&=\{ (x',x_N)\in B'\times (-1,1)\ :\ x_N\le f_n (x')  \};    \end{align*}
		\item it holds 
		\[ f_n\to f\quad \text{in } C^{1,\alpha}(B').  \]
	\end{itemize}
\end{defin}
The following is a simple rephrasing of a classical result concerning the $C^{1,\alpha}-$convergence of $\Lambda-$minimizers of the perimeter (see e.g. \cite[Theorem 4.2]{AFM}).

\begin{teorema}
	\label{teorema convergenza C^1}
	Let $\Lambda >0$ and let $E$ be a set of class $C^2$. Then for every $\varepsilon>0$, there exists $\delta=\delta(\varepsilon,E)>0$ with the following property: for every $\Lambda-$minimizer $F$ such that $|E\triangle F|\le \delta$, then $F$ is of class $C^{1,1/2}$ and
	\[\dist_{C^{1,\beta}}(E,F) \le \varepsilon\quad \text{for} \quad \beta\in (0,1/2). \]
\end{teorema}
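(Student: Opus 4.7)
The plan is to argue by contradiction, leveraging the $\varepsilon$-regularity theorem for $\Lambda$-minimizers of the perimeter, which is the engine behind \cite[Theorem 4.2]{AFM}. Suppose the conclusion fails: then there exist $\varepsilon_0>0$, $\beta\in(0,1/2)$ and a sequence $(F_n)$ of $\Lambda$-minimizers with $|F_n\triangle E|\to 0$ but $\dist_{C^{1,\beta}}(E,F_n)>\varepsilon_0$ for every $n$.

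The first step is to upgrade the $L^1$-convergence $F_n\to E$ to Hausdorff convergence of the boundaries $\partial F_n\to\partial E$. This is a standard consequence of the uniform lower and upper density estimates enjoyed by $\Lambda$-minimizers: for $\delta$ small the sets $F_n$ cannot develop boundary points isolated from $\partial E$, and symmetrically $\partial E$ cannot lie far from $\partial F_n$. Combined with the $C^2$ regularity of $E$, this implies that for each $x\in\partial E$ one can choose a cylinder $C_x=B'\times(-1,1)$, in coordinates adapted to $\nu_E(x)$, in which $\partial E$ is the graph of some $f\in C^2(B')$, and for $n$ large enough $\partial F_n\cap C_x$ is contained in a thin tubular neighbourhood of this graph.

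The second step is to apply Tamanini's $C^{1,1/2}$ regularity theorem to $F_n$ in each such cylinder. The smallness of the spherical flatness excess, which follows from the Hausdorff closeness of $\partial F_n$ to the $C^2$-graph of $f$ together with $\Lambda$-minimality, yields for $n$ large a graph representation $\partial F_n\cap C_x=\{x_N=f_n(x')\}$ with $\|f_n\|_{C^{1,1/2}(B')}\le M$ for some constant $M$ independent of $n$. By Arzelà--Ascoli, up to a subsequence $f_n\to g$ in $C^{1,\beta}(B')$ for every $\beta\in(0,1/2)$, and the $L^1$-convergence $F_n\to E$ forces $g=f$. Since $\partial E\subset\T^N$ is compact, it is covered by finitely many such cylinders, so a diagonal extraction yields a subsequence realising $\dist_{C^{1,\beta}}(E,F_n)\to 0$, contradicting our assumption.

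The main obstacle is ensuring that the flatness hypothesis of Tamanini's theorem is met uniformly over a finite cover of $\partial E$ from the \emph{sole} assumption of $L^1$-closeness of $F_n$ to $E$. This is precisely what the density estimate step accomplishes, and it is where $\Lambda$-minimality is essential: without it, $L^1$-closeness alone would not suffice to conclude Hausdorff-closeness of the boundaries, let alone the stronger $C^{1,\beta}$ convergence.
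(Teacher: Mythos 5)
Your argument is correct and is essentially the classical one: the paper itself offers no proof of this statement, calling it a ``simple rephrasing of a classical result'' and pointing to \cite[Theorem~4.2]{AFM}, whose proof is exactly the compactness-by-contradiction scheme you sketch -- density estimates for $\Lambda$-minimizers to upgrade $L^1$-closeness to Hausdorff closeness of boundaries, Tamanini's $\varepsilon$-regularity to obtain uniform local $C^{1,1/2}$ graph bounds, and Arzel\`a--Ascoli over a finite cylinder cover to extract $C^{1,\beta}$ convergence, contradicting the standing lower bound on $\dist_{C^{1,\beta}}$. There is no gap.
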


We now recall some preliminary results from \cite{AFM} regarding the second variation of the perimeter in the flat torus. 
Firstly, we fix some notation. Let $E\subset \T^N$ be a set of class $C^2$ and let $\nu_E$ be its exterior normal.
Throughout the section, when no confusion is possible, we shall omit the subscript $E$ and write $\nu$ instead of $\nu_E$. Given a vector $X$, its tangential part on $\bd E$ is defined as $X_\tau=X-(X\cdot \nu)\nu$. In particular, we will denote by $D_\tau$ the tangential gradient operator given by $D_\tau \varphi=(D\varphi)_\tau$ . We also recall that the second fundamental form $B_E$ of $\bd E$ is given by $D_\tau \nu$, its eigenvalues are called principal curvatures and its trace is called mean curvature, and we denote it by $H_E$.

Let $X:\T^N\to\rosso{\R^N}$ be a vector field of class $C^2$. Consider the associated flow $\Phi:\T^N\times (-1,1)\to\T^N$ defined by $\frac {\bd \Phi}{\bd t}=X(\Phi),\ \Phi(\cdot,0)=Id$. We define the \textit{first and second variation of the perimeter at $E$} with respect to the flow $\Phi$ to be respectively the values
\[\dfrac {\ud} {\ud t}\Big\lvert_{t=0}P(E_t), \quad \dfrac {\ud^2} {\ud t^2}\Big\lvert_{t=0}P(E_t) \]
where $E_t=\Phi(\cdot,t)(E).$ It is a classical result of the theory of sets of finite perimeter (see \cite{Mag}) that the the first variation of the perimeter has the following expression
\[ \dfrac {\ud} {\ud t}\Big\lvert_{t=0}P(E_t)=\int_{\bd^* E} H_E \nu_E\cdot X\udH,   \]
where $H_E$ is the (weak) scalar curvature of $E$. The following equation for the second variation of the perimeter holds.
\begin{teorema}[Theorem 3.1 in \cite{AFM}]
	If $E$, $X$ and $\nu$ are as above, we have
	\begin{align*}
		\dfrac {\ud^2} {\ud t^2}\Big\lvert_{t=0}P(E_t)=&\int_{\bd E} \left( |D_\tau (X\cdot \nu)|^2-|B_E|^2 (X\cdot \nu)^2 \right)\udH -\int_{\bd E} H_E \div_\tau (X_\tau(X\cdot \nu)) \udH\\
		&+\int_{\bd E} H_E(\div X)(X\cdot \nu)  \udH.
	\end{align*}
\end{teorema}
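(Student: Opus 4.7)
The plan is to use the area formula to pull $P(E_t)$ back to an integral over the fixed surface $\bd E$, and then differentiate twice in $t$. Concretely, one writes
\[P(E_t) = \int_{\bd E} J_\tau \Phi_t(x) \,\udH(x),\]
where $J_\tau \Phi_t = \sqrt{\det g_t}$ is the tangential Jacobian and $g_t$ is the pull-back of the ambient metric to $\bd E$ via $\Phi_t$. The computation reduces to evaluating the first two $t$-derivatives of $J_\tau \Phi_t$ at $t=0$ and then integrating.

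First I would recover the known first variation. A direct differentiation of $\sqrt{\det g_t}$ at $t=0$ yields $\tfrac{\ud}{\ud t}\big\lvert_{t=0} J_\tau \Phi_t = \div_\tau X$, so that by the tangential divergence theorem on the closed hypersurface $\bd E$,
\[\dfrac{\ud}{\ud t}\Big\lvert_{t=0} P(E_t) = \int_{\bd E} \div_\tau X\,\udH = \int_{\bd E} H_E\,(X\cdot\nu)\,\udH.\]
Next, differentiating a second time and invoking Jacobi's formula for determinants together with $\bd_t \Phi_t = X(\Phi_t)$, I would obtain at $t=0$ an expression of the form
\[\dfrac{\ud^2}{\ud t^2}\Big\lvert_{t=0} J_\tau \Phi_t = (\div_\tau X)^2 - \mathrm{tr}_\tau\!\bigl[(D_\tau X)(D_\tau X)^T\bigr] + \div_\tau(DX\cdot X).\]
Then I would substitute the orthogonal decomposition $X = X_\tau + \zeta\nu$ with $\zeta := X\cdot\nu$, using the identities $D_\tau\nu = B_E$ and $|B_E|^2 = \mathrm{tr}(B_E^2)$, and integrate by parts on the closed surface $\bd E$. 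The normal component $\zeta$ produces the principal quadratic form $|D_\tau\zeta|^2 - |B_E|^2\zeta^2$; the tangential component $X_\tau$, which would yield no contribution were $E$ a critical set, generates the remaining terms involving $H_E$, which via the tangential divergence theorem reassemble into the expressions $-\int_{\bd E} H_E\,\div_\tau(X_\tau\zeta)\,\udH$ and $\int_{\bd E} H_E(\div X)\zeta\,\udH$.

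An alternative, and in my view cleaner, route is to apply the first variation formula at each time $t$, writing $\tfrac{\ud}{\ud t} P(E_t) = \int_{\bd E_t} H_{E_t}(X\cdot\nu_t)\,\udH$, then pulling back by $\Phi_t$ to a fixed integral on $\bd E$ and differentiating once more. This transfers the problem to computing the material derivatives $\tfrac{\ud}{\ud t}\bigr\lvert_{t=0}(\nu_t\circ\Phi_t)$ and $\tfrac{\ud}{\ud t}\bigr\lvert_{t=0}(H_{E_t}\circ\Phi_t)$, for which well-known formulas are available (for a purely normal flow $X = \zeta\nu$ one has $\dot\nu = -D_\tau\zeta$ and $\dot H = -\Delta_\tau\zeta - |B_E|^2\zeta$).

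The main obstacle is bookkeeping: carefully tracking the interaction between $X_\tau$ and $\zeta\nu$, because the second $t$-derivative of $\Phi_t$ at $0$ is $DX\cdot X$, which genuinely mixes tangential and normal parts and introduces cross-terms that must be cancelled by integration by parts. Controlling these cross-terms and recognizing the right integration-by-parts identity on $\bd E$ — so that one arrives at the stated expression rather than a messier equivalent one — is where the computation requires the most care, but it is a standard albeit tedious manipulation once the decomposition $X = X_\tau + \zeta\nu$ is in place.
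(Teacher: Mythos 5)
The paper does not prove this statement: it is quoted verbatim from \cite{AFM} (Theorem~3.1 there) and used as a black box, so there is no in-house proof to compare your attempt against. Your overall plan --- pull $P(E_t)$ back to $\int_{\bd E} J_\tau\Phi_t\,\udH$, differentiate twice in $t$, decompose $X=X_\tau+\zeta\nu$, and integrate by parts --- is indeed the standard route and the one taken in \cite{AFM}.

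However, the intermediate formula you propose for the second derivative of the tangential Jacobian is wrong, and the error is not a bookkeeping slip but a missing term that produces precisely the principal part of the statement. Writing $g_t=D_\tau\Phi_t^\top D_\tau\Phi_t$ and using $\frac{\ud^2}{\ud t^2}\sqrt{\det g}=\sqrt{\det g}\,\bigl[\tfrac14(\mathrm{tr}\,g^{-1}\dot g)^2+\tfrac12\mathrm{tr}(g^{-1}\ddot g)-\tfrac12\mathrm{tr}(g^{-1}\dot g\,g^{-1}\dot g)\bigr]$, one finds at $t=0$, in an orthonormal tangent frame $\{e_i\}$,
\[
\frac{\ud^2}{\ud t^2}\Big\lvert_{t=0}J_\tau\Phi_t
=(\div_\tau X)^2+\div_\tau(DX\!\cdot\!X)
+\sum_i\bigl(\bd_{e_i}X\cdot\nu\bigr)^2
-\sum_{i,j}\bigl(\bd_{e_i}X\cdot e_j\bigr)\bigl(\bd_{e_j}X\cdot e_i\bigr).
\]
Your proposed expression $(\div_\tau X)^2-\mathrm{tr}_\tau[(D_\tau X)(D_\tau X)^\top]+\div_\tau(DX\!\cdot\!X)$ differs in two ways: the quadratic in the tangent-tangent components is the Gram form $\sum_{ij}(\bd_{e_i}X\cdot e_j)^2$ rather than the trace of the \emph{square} $\sum_{ij}(\bd_{e_i}X\cdot e_j)(\bd_{e_j}X\cdot e_i)$, and --- more importantly --- the positive normal contribution $\sum_i(\bd_{e_i}X\cdot\nu)^2$ is absent. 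That missing term is exactly what becomes $|D_\tau(X\cdot\nu)|^2$ after the decomposition $\bd_{e_i}X\cdot\nu=\bd_{e_i}(X\cdot\nu)-X_\tau\cdot B_E e_i$. A quick sanity check on a flat piece of $\bd E$ with $X=(0,\psi(x_1))$ shows $J_\tau\Phi_t=\sqrt{1+t^2\psi'^2}$, hence $\ddot J(0)=\psi'^2>0$, while your formula returns $0$ (with the tangential trace) or $-\psi'^2$ (with the full trace). So, as written, the proposed computation could not arrive at the stated identity; you need to restore the normal part of $D_\tau X$ coming from $\mathrm{tr}\,\ddot g_0$ (i.e.\ from $|\bd_{e_i}X|^2$) and take the correct, unsymmetrized quadratic $\mathrm{tr}\bigl((D_\tau X)_\tau^2\bigr)$ from $\mathrm{tr}(\dot g_0^2)$.
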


\begin{oss}
	We remark that the last two integral in the above expression vanish when $E$ is a critical set for the perimeter and if $|\Phi(\cdot,t)(E)|=|E|$ for all $t\in  [0,1]$. Indeed, if $E$ is a regular critical set for the perimeter then its curvature is constant, therefore the second integral vanishes. Moreover, if the flow $\Phi$ is volume-preserving then it can be shown (see equation (2.30) in \cite{CS}) that 
	\[ 0=\dfrac{\ud^2 |E_t|}{\ud t^2}=\int_{\bd E} (\div X)(X\cdot \nu)\udH. \]
	Hence, if $\Phi$ is a volume-preserving variation of a regular critical set $E$ we have
	\[ \dfrac {\ud^2} {\ud t^2}\Big\lvert_{t=0} P(E_t)=\int_{\bd E} \left( |D_\tau (X\cdot \nu)|^2 -|B_E|^2 (X\cdot \nu)^2  \right)\udH=: \delta ^2 P(E)[X\cdot \nu_E].\]
\end{oss}
We remark that due to the translation invariance of the perimeter functional, the second variation degenerates along flows of the form $\Phi(x,t)=x+t\eta,$ where 
$\eta\in \R^N$. In view of this it is convenient to introduce the subspace $T(\bd E)$ of $\tilde H^1(\bd E):=\left\lbrace \varphi\in H^1(\bd E) : \int_{\bd E} \varphi \udH=0   \right\rbrace$ generated by the functions $\nu_i,$ $i=1,\dots, N$. Its orthogonal subspace, in the $L^2-$sense, will be denoted by $T^\perp(\bd E)$ and is given by
\[ T^\perp(\bd E)= \left\lbrace \varphi\in\tilde H^1(\bd E) : \int_{\bd E} \varphi  \nu_i\udH=0,\ i=1,\dots,N \right\rbrace.\]

\begin{defin}
	\label{definizione strictly stable set}
	We say that a regular critical set $E$ is \textit{a strictly stable set} if it has positive second variation of the perimeter, in the sense that 
	\[\delta^2P(E)[\varphi]>0,\qquad \forall \varphi \in T^{\perp} (\bd E) \setminus \{0\}.\]
\end{defin}

The following result ensures that the second variation of a strictly stable set $E$ is coercive on the subspace $T^\perp(\bd E).$
\begin{lemma}[Lemma 3.6 in \cite{AFM}]
	\label{Lemma_3.6_AFM}
	Assume that $E$ is a strictly stable set, then
	\[m_0:= \inf\{ \delta^2 P(E)[\varphi]: \varphi \in T^{\perp}(\bd E),\, \| \varphi \|_{H^1(\bd E)}=1\}>0 \]
	and
	\[\delta^2 P(E)[\varphi] \geq m_0 \|\varphi \|^2_{H^1(\bd E)} \quad \forall \varphi \in T^{\perp}(\bd E). \]
\end{lemma}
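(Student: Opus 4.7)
The second inequality reduces immediately to the first by the degree-two homogeneity of $\delta^2 P(E)$: any nonzero $\varphi\in T^\perp(\bd E)$, once rescaled to have unit $H^1$-norm, yields $\delta^2 P(E)[\varphi/\|\varphi\|_{H^1}]\ge m_0$. So the real task is to show $m_0>0$, and my plan is a standard direct-method / contradiction argument on a minimizing sequence.

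Suppose towards a contradiction that $m_0=0$. Then pick a sequence $(\varphi_n)\subset T^\perp(\bd E)$ with $\|\varphi_n\|_{H^1(\bd E)}=1$ and $\delta^2P(E)[\varphi_n]\to 0$. Since $E$ is of class $C^2$ (and in fact smooth, being a strictly stable critical set) the boundary $\bd E$ is a smooth compact $(N-1)$-manifold, so the embedding $H^1(\bd E)\hookrightarrow L^2(\bd E)$ is compact. Hence up to a subsequence $\varphi_n\rightharpoonup \varphi_\infty$ weakly in $H^1(\bd E)$ and $\varphi_n\to\varphi_\infty$ strongly in $L^2(\bd E)$. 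The constraints defining $T^\perp(\bd E)$, namely $\int_{\bd E}\varphi\udH=0$ and $\int_{\bd E}\varphi\,\nu_i\udH=0$ for $i=1,\dots,N$, are continuous on $L^2(\bd E)$, so they pass to the limit and $\varphi_\infty\in T^\perp(\bd E)$.

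Now I split into two cases. If $\varphi_\infty\ne 0$, then by weak lower semicontinuity of $\varphi\mapsto\int_{\bd E}|D_\tau\varphi|^2\udH$ combined with the strong $L^2$-convergence applied to the lower-order term $\int_{\bd E}|B_E|^2\varphi_n^2\udH$ (using that $|B_E|\in L^\infty(\bd E)$ by smoothness of $E$), I obtain
\[
\delta^2 P(E)[\varphi_\infty]\le \liminf_{n\to\infty}\delta^2 P(E)[\varphi_n]=0,
\]
which contradicts the strict stability assumption since $\varphi_\infty\in T^\perp(\bd E)\setminus\{0\}$. If instead $\varphi_\infty=0$, then the strong $L^2$-convergence gives $\int_{\bd E}|B_E|^2\varphi_n^2\udH\to 0$, and together with $\delta^2 P(E)[\varphi_n]\to 0$ this forces $\int_{\bd E}|D_\tau\varphi_n|^2\udH\to 0$. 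Combined with $\|\varphi_n\|_{L^2}\to 0$, we conclude $\|\varphi_n\|_{H^1(\bd E)}\to 0$, contradicting the normalization $\|\varphi_n\|_{H^1(\bd E)}=1$.

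Thus $m_0>0$, and as noted at the start this yields the coercivity estimate for every $\varphi\in T^\perp(\bd E)$. The only mildly delicate point in the argument is the case $\varphi_\infty=0$: one must not forget that ruling out the weak limit via strict stability is not enough on its own, because the sequence could a priori concentrate oscillation. This is handled by observing that the quadratic form, apart from the term $\int|D_\tau\varphi|^2\udH$, depends on $\varphi$ only through $L^2$-quantities, so the vanishing of $\delta^2 P(E)[\varphi_n]$ together with the $L^2$-vanishing of $\varphi_n$ automatically drives the Dirichlet part to zero as well.
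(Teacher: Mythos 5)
Your proof is correct and is essentially the standard direct-method compactness argument that \cite{AFM} uses for its Lemma~3.6; the paper at hand does not reprove the statement but simply cites that reference. The only cosmetic point worth mentioning is that strict stability already gives $m_0\ge 0$ (it is an infimum of strictly positive numbers), so the contradiction hypothesis is cleanly stated as $m_0=0$, exactly as you set it up, and both cases of your dichotomy on $\varphi_\infty$ close correctly.
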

Moreover, from the Step 1 in the proof of \cite[Theorem 3.9]{AFM} we obtain also the following result.
\begin{lemma}
	Assume that $E$ is a strictly stable set, then
	\[ \inf\left\{ \delta^2 P(E)[\varphi]: \varphi\in \tilde H^1,\  \| \varphi \|_{H^1(\bd E)}=1,\ \left| \int_{\bd E}\varphi\nu_E\udH  \right|\leq \delta \right\}\geq \dfrac{m_0}2,\]
	where the constant $m_0$ is the one in Lemma \ref{Lemma_3.6_AFM}.
	\label{Lemma_3.6_AFM_migl}
\end{lemma}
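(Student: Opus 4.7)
The plan is to reduce the statement to Lemma \ref{Lemma_3.6_AFM} via a perturbative argument: decompose $\varphi$ into its $L^2$-orthogonal projections onto $T(\bd E)$ and $T^\perp(\bd E)$, then exploit the near-orthogonality hypothesis to show that the component lying in $T(\bd E)$ is small in $H^1$.

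Concretely, I would write $\varphi=\varphi^T+\varphi^\perp$ with $\varphi^T\in T(\bd E)$ and $\varphi^\perp\in T^\perp(\bd E)$, noting that both summands lie in $\tilde H^1(\bd E)$: indeed $\int_{\bd E}\nu_i\udH=0$ by the divergence theorem, so $T(\bd E)\subset\tilde H^1(\bd E)$, and the mean-zero property of $\varphi$ transfers to $\varphi^\perp$. Writing $Q$ for the symmetric bilinear form on $H^1(\bd E)$ associated to the quadratic form $\delta^2P(E)[\,\cdot\,]$, polarization gives
\[\delta^2P(E)[\varphi]=Q(\varphi^\perp,\varphi^\perp)+2\,Q(\varphi^\perp,\varphi^T)+Q(\varphi^T,\varphi^T).\]
Two facts then drive the argument. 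First, rigid translations of $\T^N$ preserve both the volume and the perimeter of $E$, and they induce the normal velocities $\eta\cdot\nu_E$ spanning $T(\bd E)$; hence the volume-preserving second variation vanishes identically on $T(\bd E)$ and so $Q(\varphi^T,\varphi^T)=0$. Second, since $E$ is of class $C^2$, $|B_E|^2$ is bounded, so $Q$ is continuous on $H^1(\bd E)\times H^1(\bd E)$, producing a constant $M=M(E)$ with $|Q(u,v)|\le M\|u\|_{H^1(\bd E)}\|v\|_{H^1(\bd E)}$.

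The next step is to quantify the smallness of $\varphi^T$ in terms of the given data. Since $T(\bd E)$ is a finite-dimensional subspace of $L^2(\bd E)$ spanned by $\nu_1,\ldots,\nu_N$, I would pick an $L^2(\bd E)$-orthonormal basis $\{e_1,\ldots,e_k\}$ of $T(\bd E)$ (this step handles the possibility that the $\nu_i$'s are $L^2$-dependent, as occurs when $E$ is a lamella). Each $e_\ell$ is a fixed linear combination of the $\nu_i$'s, so the coefficients of $\varphi^T=\sum_\ell c_\ell e_\ell$, namely $c_\ell=\int_{\bd E}\varphi\,e_\ell\udH$, are linear functionals of the vector $\int_{\bd E}\varphi\,\nu_E\udH\in\R^N$. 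By equivalence of norms on the finite-dimensional space $T(\bd E)$ there is $C_E>0$ with
\[\|\varphi^T\|_{H^1(\bd E)}\le C_E\left|\int_{\bd E}\varphi\,\nu_E\udH\right|\le C_E\,\delta.\]

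Putting the pieces together with $\|\varphi\|_{H^1(\bd E)}=1$, the triangle inequality yields $\|\varphi^\perp\|_{H^1(\bd E)}\ge 1-C_E\delta$, and Lemma \ref{Lemma_3.6_AFM} applied to $\varphi^\perp\in T^\perp(\bd E)$ gives $Q(\varphi^\perp,\varphi^\perp)\ge m_0(1-C_E\delta)^2$. The cross term is bounded by $|2\,Q(\varphi^\perp,\varphi^T)|\le 2MC_E\delta$, so
\[\delta^2 P(E)[\varphi]\ge m_0(1-C_E\delta)^2-2MC_E\delta\ge m_0-C'\delta,\]
for a constant $C'=C'(E,m_0)$. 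Choosing $\delta$ sufficiently small so that $C'\delta\le m_0/2$ (this smallness is the implicit content of the statement) delivers the claim. I do not anticipate any serious obstacle; the only step requiring some care is carrying out the $L^2$-decomposition intrinsically to $T(\bd E)$ rather than via the possibly redundant spanning set $\{\nu_1,\ldots,\nu_N\}$, which is precisely why an orthonormal basis of $T(\bd E)$ is preferable to working directly with the Gram matrix of the $\nu_i$'s.
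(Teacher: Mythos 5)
Your proof is correct. The paper does not present its own argument for this lemma — it simply invokes Step~1 of the proof of Theorem~3.9 in Acerbi--Fusco--Morini \cite{AFM} — and your direct reduction to Lemma~\ref{Lemma_3.6_AFM} via the $L^2$-orthogonal decomposition $\varphi=\varphi^T+\varphi^\perp$ is precisely the standard argument underlying that reference: the quadratic form vanishes on the degenerate translation space $T(\bd E)$ (note this uses criticality of $E$, i.e.\ $H_E$ constant, for the full second-variation formula to reduce to $\delta^2P(E)$ along translation fields, which is worth stating explicitly), the smallness hypothesis controls $\|\varphi^T\|_{H^1}$, and continuity of the associated bilinear form on $H^1\times H^1$ absorbs the cross term. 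Two minor points: in the cross-term estimate one should use $\|\varphi^\perp\|_{H^1}\le 1+C_E\delta$ (triangle inequality from $\|\varphi\|_{H^1}=1$), not $\le 1$, which only alters the constant $C'$; and your care in passing to an $L^2$-orthonormal basis of $T(\bd E)$ rather than the possibly redundant spanning set $\{\nu_1,\dots,\nu_N\}$ is indeed necessary (e.g.\ for lamellae).
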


In the proof of our main result we will also need the following key lemma which shows
that any set $F$ sufficiently close to $E$ can be translated in such a way that the resulting set $\tilde F$ satisfies $\bd \tilde F= \{ x+\varphi(x)\nu_E(x) : x\in\bd E \}$, with $\varphi$ having a suitably small projection on $T(\bd E)$.
\begin{lemma}[Lemma 3.8 in \cite{AFM}]
	Let $E\subset \T^N$ be of class $C^3$ and let $p>N-1$. For every $\delta>0$ there exist $C>0$ and $\eta_0>0$ such that if $F\subset\T^N$ satisfies $\bd F=\{ x+\psi(x)\nu_E(x) : x\in\bd E \}$ for some $\psi\in C^2({\bd E})$ with $\|\psi\|_{W^{2,p}(\bd E)}\le \eta_0$, then there exist $\sigma\in\T^N$ and $\varphi\in W^{2,p}(\bd E)$ with the properties that 
	\[ |\sigma|\le C\|\psi\|_{W^{2,p}(\bd E)},\quad \|\varphi\|_{W^{2,p}(\bd E)}\le C\|\psi\|_{W^{2,p}(\bd E)} \]
	and
	\[\bd F+\sigma=\{ x+\varphi(x)\nu_E(x) : x\in\bd E \},\quad \left\lvert \int_{\bd E}\varphi\nu_E\udH  \right\rvert\le \delta \|\varphi\|_{L^2(\bd E)}.\]
	\label{lemma 3.8_AFM}
\end{lemma}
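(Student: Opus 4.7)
The natural plan is to find $\sigma$ by solving, in $\sigma$, the equation $\int_{\bd E}\varphi_\sigma\,\nu_E\udH=0$, where $\varphi_\sigma$ is the function parameterizing $\bd F+\sigma$ as a normal graph over $\bd E$. Since the statement only requires the bound $\delta\|\varphi\|_{L^2}$, hitting zero would be more than enough, so the core of the proof is an implicit function theorem / contraction argument on a finite-dimensional equation. First I would make $\varphi_\sigma$ well-defined: because $\|\psi\|_{W^{2,p}}\le\eta_0$ is small and $p>N-1$ gives $W^{2,p}(\bd E)\hookrightarrow C^{1,\alpha}(\bd E)$, the boundary $\bd F$ lies in a tubular neighbourhood of $\bd E$; translating by a small $\sigma$ keeps us in that tubular neighbourhood, so $\bd F+\sigma$ is again a normal graph over $\bd E$ and $\varphi_\sigma$ is obtained smoothly via the nearest-point projection. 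A direct linearization (writing $x+\varphi_\sigma(x)\nu_E(x)=y+\psi(y)\nu_E(y)+\sigma$ and projecting on $\nu_E(x)$) gives the expansion
\[
\varphi_\sigma(x)=\psi(x)+\sigma\cdot\nu_E(x)+R(x,\sigma,\psi),
\]
with $\|R\|_{W^{2,p}}\le C(|\sigma|+\|\psi\|_{W^{2,p}})(|\sigma|+\|\psi\|_{W^{2,p}})$.

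Next comes the finite-dimensional algebra. Introduce the symmetric positive semidefinite matrix $A:=\int_{\bd E}\nu_E\otimes\nu_E\udH$ and let $V:=\operatorname{Range}(A)=(\ker A)^\perp\subset\R^N$. The key observation is that $A$ may be degenerate (e.g.\ when $E$ is a lamella, $\ker A$ is the hyperplane of directions orthogonal to the normal), but if $w\in\ker A$ then $\int_{\bd E}(w\cdot\nu_E)^2\udH=w\cdot Aw=0$, hence $w\cdot\nu_E\equiv0$ on $\bd E$; consequently
\[
w\cdot\int_{\bd E}f\,\nu_E\udH=\int_{\bd E}f\,(w\cdot\nu_E)\udH=0
\]
for every $f\in L^1(\bd E)$, so the vector $\int_{\bd E}f\,\nu_E\udH$ always lies in $V$. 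Moreover $A|_V\colon V\to V$ is positive definite and therefore invertible. These two facts mean the degeneracy of $A$ is harmless: we just work in $V$.

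Now define $G\colon B_{r_0}\cap V\to V$ by $G(\sigma):=\int_{\bd E}\varphi_\sigma\,\nu_E\udH$. From the expansion above,
\[
G(\sigma)=\int_{\bd E}\psi\,\nu_E\udH+A\sigma+O\bigl((|\sigma|+\|\psi\|_{W^{2,p}})^2\bigr),
\]
so $G(0)=\int_{\bd E}\psi\,\nu_E\udH\in V$ is of order $\|\psi\|_{W^{2,p}}$ and $DG(0)=A|_V$ is invertible on $V$. Either the implicit function theorem or the Banach fixed point theorem applied to $\sigma\mapsto\sigma-(A|_V)^{-1}G(\sigma)$ on a small ball of $V$ then produces, for $\eta_0$ small enough, a unique $\sigma\in V$ with $G(\sigma)=0$ and $|\sigma|\le C\|\psi\|_{W^{2,p}}$. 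The corresponding $\varphi:=\varphi_\sigma$ satisfies $\int_{\bd E}\varphi\,\nu_E\udH=0$ (which is stronger than the required $\delta$-inequality) and, by the expansion, $\|\varphi\|_{W^{2,p}}\le C\|\psi\|_{W^{2,p}}$, proving the claim.

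The main obstacle I anticipate is precisely the possible degeneracy of $A$: without the observation that $\int_{\bd E}f\,\nu_E\udH$ automatically lies in $\operatorname{Range}(A)$, one would not be able to invert the linearized equation when $E$ has symmetries (e.g.\ translation invariance in some direction, as for lamellae, cylinders or gyroids shown in Figure~\ref{figure stable sets}). The other technical point to verify carefully is that the IFT is applied in the Banach space $W^{2,p}(\bd E)$ with $p>N-1$, so that the tubular neighbourhood construction and the $C^1$ control on the normal graph are available uniformly in the parameters.
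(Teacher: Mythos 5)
The paper does not prove this lemma; it is recalled verbatim from Acerbi--Fusco--Morini \cite{AFM}, so there is no in-paper proof to compare against. Judged on its own terms, your argument is sound and in fact proves the slightly stronger statement $\int_{\bd E}\varphi\,\nu_E\udH=0$ (from which the $\delta$-inequality is immediate). The two points on which everything hinges are handled correctly: $(1)$~since $A=\int_{\bd E}\nu_E\otimes\nu_E\udH$ is symmetric and positive semidefinite, $w\in\ker A$ forces $w\cdot\nu_E\equiv 0$ on $\bd E$, so the vector $\int_{\bd E} f\,\nu_E\udH$ automatically lies in $V=\operatorname{Range}(A)=(\ker A)^\perp$, making $G\colon V\to V$ a square system with $DG(0)=A|_V$ genuinely invertible; $(2)$~the embedding $W^{2,p}\hookrightarrow C^{1,\alpha}$ for $p>N-1$ and the $C^3$ regularity of $\bd E$ give a tubular neighbourhood of uniform reach in which $\bd F+\sigma$ remains a normal graph over $\bd E$, so $\varphi_\sigma$ depends $C^1$ on $\sigma$ and the contraction/IFT argument is legitimate. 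The expansion $\varphi_\sigma=\psi+\sigma\cdot\nu_E+R$ with $\|R\|_{W^{2,p}}\lesssim(|\sigma|+\|\psi\|_{W^{2,p}})^2$ is correct (the cross term $-\nabla\psi\cdot\sigma_\tau$ coming from reparametrization is within this bound), and it yields both $|\sigma|\le C\|\psi\|_{W^{2,p}}$ and $\|\varphi\|_{W^{2,p}}\le C\|\psi\|_{W^{2,p}}$ as claimed. One remark on presentation: your constants $C,\eta_0$ end up being independent of $\delta$, whereas the statement allows them to depend on $\delta$; that is not a defect but a strengthening, consistent with the fact that you land exactly on the kernel of the constraint rather than merely $\delta$-close to it. If you wanted to mirror the original [AFM] formulation more faithfully, a single Newton step $\sigma=-(A|_V)^{-1}\int_{\bd E}\psi\,\nu_E\udH$ already gives $\bigl|\int_{\bd E}\varphi\,\nu_E\udH\bigr|\le C\eta_0\|\psi\|_{W^{2,p}}$, but one then has to argue separately that $\|\varphi\|_{L^2}$ is not too small relative to $\|\psi\|$ to convert this into the ratio bound; your fixed-point version bypasses that delicate comparison, which is a genuine advantage.
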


Let $E, F\subset \T^N$ be measurable sets. We define
\begin{equation*}
	\alpha(E,F) := \min_{x \in \T^N} |E \triangle (F+x)|.
\end{equation*}
In one of the main results of \cite{AFM} the authors proved that the distance $\alpha(\cdot,\cdot)$ between a set and a strictly stable set can be bounded by the square root of the difference of their perimeters.
\begin{teorema}[Corollary 1.2 in \cite{AFM}]
	\label{coroll 1.2}
	Let $E\subset\T^N$ be a strictly stable set. Then, there exist $ \sigma=\sigma(E)$, $C=C(E)>0$ such that 
	\[ C\alpha^2(E,F)\leq P(F)-P(E) \]
	for all $F\subset \T^N$ with $|F|=|E|$ and $\alpha(E,F)< \sigma$.
\end{teorema}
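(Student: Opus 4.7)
The plan is to argue by contradiction, exploiting a selection–principle/penalization argument to replace a potentially irregular near-competitor $F$ by a smooth normal graph over $\bd E$, and then use the coercivity of $\delta^2 P(E)$ on the appropriate subspace.

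First I would assume toward a contradiction that there exist sequences $F_n\subset\T^N$ with $|F_n|=|E|$, $\varepsilon_n:=\alpha(E,F_n)\to 0$, and
\[ P(F_n)-P(E) < \tfrac{1}{n}\,\varepsilon_n^2.\]
Translating, I may assume the minimum in $\alpha$ is attained at $0$, so $|F_n\triangle E|=\varepsilon_n$. The goal is to reach a lower bound $P(F_n)-P(E)\ge c\,\varepsilon_n^2$ via the second variation, which fails on the raw sets $F_n$ because of possible irregularity. Hence the next step is a selection principle: for $\Lambda_n\to+\infty$ suitably large, consider the penalized problem
\[ \min\left\{ P(F)+\Lambda_n\bigl||F\triangle E|-\varepsilon_n\bigr|\ :\ F\subset\T^N,\ |F|=|E|\right\},\]
and let $\tilde F_n$ be a minimizer (existence by a standard compactness argument in $BV$). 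Testing with $F_n$ gives $P(\tilde F_n)\le P(F_n)$ and $|\tilde F_n\triangle E|=\varepsilon_n+o(\varepsilon_n)$; in particular $\tilde F_n\to E$ in $L^1$. A standard computation then shows that $\tilde F_n$ is a $\Lambda$-minimizer of the perimeter on small balls (for some $\Lambda$ independent of $n$), since the volume constraint can be absorbed into a local variation via a fixed vector field and the penalization contributes a Lipschitz perturbation in $|\cdot\triangle E|$.

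Second, I would apply Theorem~\ref{teorema convergenza C^1}: $\Lambda$-minimality together with $|\tilde F_n\triangle E|\to 0$ yields $\tilde F_n\to E$ in $C^{1,\beta}$, so for $n$ large $\bd\tilde F_n=\{x+\psi_n(x)\nu_E(x):x\in\bd E\}$ with $\psi_n\to 0$ in $C^{1,\beta}$; higher regularity from the Euler–Lagrange equation (constant mean curvature up to the penalization term) upgrades this to $\psi_n\to 0$ in $W^{2,p}$. Lemma~\ref{lemma 3.8_AFM} then furnishes translations $\sigma_n\to 0$ such that $\tilde F_n+\sigma_n$ is the normal graph of some $\varphi_n$ over $\bd E$ satisfying
\[ \Bigl|\int_{\bd E}\varphi_n\nu_E\udH\Bigr|\le\delta\|\varphi_n\|_{L^2(\bd E)}, \qquad \|\varphi_n\|_{W^{2,p}}\to 0.\]
The volume constraint $|E_{\varphi_n}|=|E|$ expands as $\int_{\bd E}\varphi_n\udH=O(\|\varphi_n\|_{L^2}^2)$, so after subtracting a constant $c_n=O(\|\varphi_n\|_{L^2})$ with $c_n^2=O(\|\varphi_n\|_{L^2}^2)$ we may work with $\bar\varphi_n:=\varphi_n-c_n\in\tilde H^1(\bd E)$, which still satisfies the orthogonality condition of Lemma~\ref{Lemma_3.6_AFM_migl} up to the constant $\delta$.

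Third, I would combine a Taylor expansion of the perimeter along the normal deformation with the coercivity of the second variation. Using the remark following the second-variation formula and controlling the remainder in $C^{1,\beta}$, one gets
\[ P(\tilde F_n)-P(E)=\tfrac{1}{2}\delta^2P(E)[\bar\varphi_n]+o\bigl(\|\bar\varphi_n\|_{H^1(\bd E)}^2\bigr),\]
and Lemma~\ref{Lemma_3.6_AFM_migl} provides $\delta^2 P(E)[\bar\varphi_n]\ge\tfrac{m_0}{2}\|\bar\varphi_n\|_{H^1}^2$, yielding $P(\tilde F_n)-P(E)\ge\tfrac{m_0}{8}\|\bar\varphi_n\|_{H^1}^2$ for $n$ large. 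On the other hand, by the area formula,
\[ \alpha(E,\tilde F_n)\le |E\triangle(\tilde F_n+\sigma_n)|\le C\|\varphi_n\|_{L^1(\bd E)}\le C'\|\bar\varphi_n\|_{H^1(\bd E)},\]
so $P(\tilde F_n)-P(E)\ge c_0\,\alpha(E,\tilde F_n)^2$. Since the selection principle forces $\alpha(E,\tilde F_n)\ge(1-o(1))\varepsilon_n$ while $P(\tilde F_n)-P(E)\le P(F_n)-P(E)<\varepsilon_n^2/n$, we obtain a contradiction.

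The main obstacle I expect is the construction and analysis of the penalized problem: one must choose the penalization so that (i) $|\tilde F_n\triangle E|$ stays at essentially $\varepsilon_n$ without collapsing to $0$, (ii) $\tilde F_n$ remains a genuine $\Lambda$-minimizer of the perimeter at scale independent of $n$ so that the regularity theory applies uniformly, and (iii) the volume constraint is compatible with these two requirements. Once this regularization is in place, the second-variation argument proceeds linearly along the lines of Fuglede's strategy, adapted to the periodic setting via Lemmas~\ref{Lemma_3.6_AFM_migl} and~\ref{lemma 3.8_AFM}.
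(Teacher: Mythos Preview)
This statement is not proved in the present paper: it is quoted from \cite{AFM} (Corollary~1.2 there, a consequence of their Theorem~1.1 and Theorem~3.9) and recorded in Section~\ref{preliminaries} as a preliminary fact, with no argument supplied. So there is no proof in this paper to compare against.

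That said, your outline is essentially the strategy carried out in \cite{AFM}: contradiction plus a selection principle to replace the irregular $F_n$ by regular competitors, then $\Lambda$-minimizer regularity (cf.\ Theorem~\ref{teorema convergenza C^1}) to write the replaced sets as $W^{2,p}$-small normal graphs, Lemma~\ref{lemma 3.8_AFM} to kill the translational degeneracy, and finally a second-order Taylor expansion of the perimeter combined with the coercivity in Lemmas~\ref{Lemma_3.6_AFM}--\ref{Lemma_3.6_AFM_migl}. The one substantive deviation is your choice of penalty. You take $\Lambda_n\bigl||F\triangle E|-\varepsilon_n\bigr|$ with $\Lambda_n\to\infty$; this makes it hard to get a \emph{uniform} $\Lambda$-minimality constant (needed for the regularity step) while simultaneously forcing $|\tilde F_n\triangle E|\sim\varepsilon_n$. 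In \cite{AFM} the penalization is of the form $\Lambda\bigl(\alpha(E,F)-\varepsilon_n\bigr)^2$ (plus a volume penalty), with $\Lambda$ \emph{fixed} large: the quadratic term is Lipschitz in $|F\triangle E|$ uniformly in $n$, so $\Lambda$-minimality is automatic, and a short argument shows the minimizer cannot collapse to $E$. This resolves exactly the obstacle you flag at the end; with that adjustment your sketch matches the original proof.
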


\section{A quantitative generalized Alexandrov Theorem}
\label{section Alexandrof}

In this section, we will prove that local minimizers of the perimeter in the flat torus satisfy a quantitative Alexandrov-type estimate. We reproduce some arguments similar to the ones used in the proof of Theorem~1.3 in \cite{MPS}.
In this section, we consider $E\subset \T^N$ a strictly stable set. Thanks to some classical results for sets of finite perimeter (see for example \cite[Theorem  27.4]{Mag}), the previous hypothesis implies that $E$ is connected and it is of class~$C^{\infty}$.

First of all, we compute the $(N-1)-$Jacobian of the map \[\Phi:\bd E\to\bd E_f \subset \R^N, \qquad x \mapsto x+f(x)\nu_E(x).\]
Given $x \in \bd E$, we choose an orthonormal basis
\[\mathcal{B'}:=\left\lbrace v_1(x),\dots ,v_{N-1}(x)\right\rbrace\]
of $T_x  E$ such that in this basis the second fundamental form of $E$, $B_E(x): T_x  E\to T_{x} E \subset \R^N$,
has the following expression
\[B_E(x)=\begin{pmatrix} 
	\kappa_1(x)  &  & \\
	&\ddots  & \\
	&&\kappa_{N-1}(x) \\
	0&\dots & 0
\end{pmatrix}, \]
where $\kappa_1(x), \ldots, \kappa_{N-1}(x)$ are the principal curvatures of $E$ in $x$. 
We then complete $\mathcal{B'}$ to a basis $\mathcal{B}$ of the whole $\R^N$ with the normal vector $v_N(x):=\nu_E(x)$. 
In the following, to simplify the notation, we will drop the dependence on $x$. 
The tangential differential 
of $\Phi$ with respect to the basis $\mathcal{B}$ is given by
\[ D\Phi=I+\nu_E\otimes \nabla f+f D\nu_E,\]
where $I$ is the immersion $T_x  E\hookrightarrow \R^N$, $\nabla f$ is the tangential gradient of $f$ and $D\nu_E$ is the tangential differential of $\nu_E$. 
Given the regularity of $\bd E$, we recall that $D\nu_E$ is equal to $B_E$. 
Moreover, by definition of $\mathcal{B}$, we have that 
\begin{equation*}
	(\nu_E \otimes \nabla f)(v_i, v_j)= \delta_{N,i}\, \nabla f \cdot v_j, \quad i=1, \ldots,N, \,\, j =1, \ldots, N-1.
\end{equation*}
Thanks to the previous observations we obtain
\begin{equation*}
	D\Phi=
	\begin{pmatrix} 
		1 &  &  \\
		&\ddots  & \\
		&&1\\
		0 &\dots & 0
	\end{pmatrix} 
	+
	\begin{pmatrix} 
		0 &  & \\
		&\ddots  & \\
		&&0\\
		\bd_{v_1} f &\dots & \bd_{v_{N-1}} f
	\end{pmatrix} 
	+
	\begin{pmatrix} 
		\kappa_1 f &  & \\
		&\ddots  & \\
		&&\kappa_{N-1} f\\
		0 &\dots & 0
	\end{pmatrix} ,
\end{equation*}
thus we find the following expression
\begin{equation}
	D\Phi=\begin{pmatrix} 
		1+\kappa_1 f &  & \\
		&\ddots  & \\
		&&1+\kappa_{N-1} f\\
		\bd_{v_1} f &\dots & \bd_{v_{N-1}} f
	\end{pmatrix}.
	\label{D Phi}
\end{equation}
By Binet formula, the Jacobian $J\Phi$ can be explicitly computed as
\begin{align}
	J \Phi&=\left (\prod_{i=1}^{N-1}(1+\kappa_i f)^2+\sum_{j=1}^{N-1} (\bd_{v_j} f)^2\prod_{i\neq j}(1+\kappa_i f)^2\right )^{1/2}\nonumber\\
	&=\prod_{i=1}^{N-1}(1+\kappa_i f)\left(  1+ \sum_{j=1}^{N-1} \dfrac{(\bd_{v_j} f)^2}{(1+\kappa_j f)^2} \right)^{1/2}.
	\label{Jac_gen}
\end{align}
To show the previous formula, we characterize the minors of $D\Phi$. If we omit the $N-$th row of $D\Phi$, we obtain the minor
\[ M_N=\begin{pmatrix} 
	1+\kappa_1 f &  & \\
	&\ddots  & \\
	&&1+\kappa_{N-1} f
\end{pmatrix}, \]
if we omit the $i-$th row of $D \Phi$ for $1\le i\le N-1$, we obtain the minor 
\[M_i=\begin{pmatrix} 
	1+\kappa_1 f &  && & &  & \\
	& \ddots& &  & &&& \\
	& & &1+\kappa_{i-1} f & & && \\
	& & & &1+\kappa_{i+1} f & && \\
	& & && & & \ddots && \\
	& & && & & & 1+\kappa_{N-1}f\\    
	\bd_{v_1} f & \dots&&\bd_{v_{i-1}}f & \bd_{v_{i+1}}f & &\dots & \bd_{v_{N-1}} f
\end{pmatrix}.\]
We then deduce \eqref{Jac_gen} by explicitly computing
\[ \det(M_N)^2=\prod_{i=1}^{N-1}(1+\kappa_i f)^2,\quad \det (M_i)^2=(\bd_{v_i} f)^2\prod_{j\neq i}(1+\kappa_j f)^2 .\]

The previous formula for $J\Phi$ allows us to calculate some quantities that will be useful later on. Observe that, if $\|f\|_{C^1}$ is small enough, the map $\Phi$ is a diffeomorphism from $\bd E$ to $\Phi(\bd E)=\bd E_f$, and thus the tangential differential $D\Phi: T_x  E\to T_{\Phi(x)} E_f$ is a surjective map. In particular, this allows us to calculate the normal \rosso{vector} $\nu_{E_f}$ in $\Phi(x)$. We remark that a vector $v$ orthogonal to every column of \eqref{D Phi} is a normal vector to the whole tangent space $T_{\Phi(x)} E_f$, therefore a possible $v$ is given by
\[ v=-\sum_{i=1}^{N-1} \frac{\bd_{v_i} f}{1+\kappa_i f}v_i+\nu_E, \]
where the sign of the component along $\nu_E$ is taken positive so that the case $f=0$ is consistent with the orientation of $\nu_E$.
Since $|v|\ge 1$, by normalizing $v$ we obtain the normal \rosso{vector} 
\begin{equation}
	\nu_{E_f}=\left(\nu_E-\sum_{i=1}^{N-1} \frac{\bd_{v_i} f}{1+\kappa_i f}v_i\right)  \left(  1+ \sum_{j=1}^{N-1} \dfrac{(\bd_{v_j} f)^2}{(1+\kappa_j f)^2} \right)^{-1/2},
	\label{vers normale E_f}
\end{equation}
moreover, we remark that
\begin{equation}
	\nu_E\cdot \nu_{E_f}=\left(  1+ \sum_{j=1}^{N-1} \dfrac{(\bd_{v_j} f)^2}{(1+\kappa_j f)^2} \right)^{-1/2}.
	\label{prod scal vers normali}
\end{equation}

We can now compute explicitly the formula for the first variation of the perimeter.
\begin{lemma}
	Setting $Q:=\left(  1+ \sum_{j=1}^{N-1} \dfrac{(\bd_{v_j} f)^2}{(1+\kappa_j f)^2} \right)^{1/2}$, the following formulas hold true:
	\begin{enumerate}
		\item If $f\in L^\infty(\bd E)\cap H^1(\bd E)$ with $\|f\|_{L^\infty}$ sufficiently small, then
		\[ P(E_f)=\int_{\bd E}Q\prod_{i=1}^{N-1}(1+\kappa_i f) \ud \mathcal H^{N-1}. \]
		\item If $f\in L^{\infty}(\bd E)\cap H^1(\bd E)$ with $\|f\|_{L^\infty} $ sufficiently small, then the first variation $\delta P(E_f)[\varphi]$ exists for all $\varphi\in C^1(\bd E)$ and is given by
		\begin{align}
			\delta P(E_f)[\varphi]
			=&\int_{\bd E} \varphi \,Q\sum_{i=1}^{N-1}\kappa_i\prod_{j\neq i}(1+\kappa_j f)  \udH \nonumber\\
			&+\int_{\bd E}\dfrac 1Q\prod_{i=1}^{N-1}(1+\kappa_i f) \left (\sum_{j=1}^{N-1}\frac{\bd_{v_j} \varphi \,  \bd_{v_j} f}{(1+k_j f)^2} -
			\varphi\,\sum_{j=1}^{N-1}\frac{ k_j\,(\bd_{v_j} f)^2}{(1+k_j f)^3}\right)\udH.
			\label{variaz_gen}
		\end{align}
	\end{enumerate}
\end{lemma}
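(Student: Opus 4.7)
The plan is to derive part (1) from the area formula applied to the parametrization $\Phi:\bd E\to\bd E_f$ and to derive part (2) by differentiating the resulting integral in a one-parameter family at $t=0$.

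For part (1), I would first observe that since $\|f\|_{L^\infty}$ is small and $E$ is smooth, the map $\Phi(x)=x+f(x)\nu_E(x)$ is a bijective $H^1$-parametrization of $\bd E_f$ over $\bd E$, with tangential differential given by \eqref{D Phi} and Jacobian \eqref{Jac_gen}. After a standard smooth approximation of $f$, the area formula for rectifiable sets then yields
\[
P(E_f)=\int_{\bd E_f}\udH=\int_{\bd E}J\Phi\,\udH=\int_{\bd E}Q\prod_{i=1}^{N-1}(1+\kappa_i f)\,\udH,
\]
which is the claimed expression.

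For part (2), I would set $g_t:=f+t\varphi$ and note that for $|t|\le t_0$ with $t_0$ small enough, $\|g_t\|_{L^\infty}$ remains small, so that part (1) applies to each $g_t$. Thus
\[
P(E_{g_t})=\int_{\bd E}Q(g_t)\prod_{i=1}^{N-1}(1+\kappa_i g_t)\,\udH.
\]
The next step is to differentiate under the integral at $t=0$ by means of the Leibniz rule. A direct computation gives
\[
\frac{\ud}{\ud t}\Big|_{t=0}\prod_{i=1}^{N-1}(1+\kappa_i g_t)=\varphi\sum_{i=1}^{N-1}\kappa_i\prod_{j\neq i}(1+\kappa_j f),
\]
while the chain rule applied to $Q(g_t)=\bigl(1+\sum_j(\bd_{v_j}g_t)^2/(1+\kappa_j g_t)^2\bigr)^{1/2}$ produces
\[
\frac{\ud}{\ud t}\Big|_{t=0}Q(g_t)=\frac{1}{Q}\sum_{j=1}^{N-1}\left[\frac{\bd_{v_j}f\,\bd_{v_j}\varphi}{(1+\kappa_j f)^2}-\frac{\kappa_j\varphi\,(\bd_{v_j}f)^2}{(1+\kappa_j f)^3}\right].
\]
Assembling the two contributions by the product rule then reproduces \eqref{variaz_gen} exactly.

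The one delicate point is the justification of the differentiation under the integral, since $f$ is only $H^1$ and not $C^1$. The strategy I would follow is to dominate, uniformly for $|t|\le t_0$, the $t$-derivative of the integrand by a fixed $L^1$ function on $\bd E$. This is feasible because $\varphi$ and the ambient curvatures $\kappa_j$ are bounded with bounded tangential gradients, the quantities $1+\kappa_j g_t$ stay uniformly bounded away from zero for $\|f\|_{L^\infty}$ small, and $\bd_{v_j}f\in L^2(\bd E)$, so that products like $\bd_{v_j}f\,\bd_{v_j}\varphi$ and $(\bd_{v_j}f)^2$ are integrable by Cauchy–Schwarz. Dominated convergence then permits the exchange and completes the proof.
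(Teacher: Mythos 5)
Your argument follows the same route as the paper: part (1) via the area formula and the Jacobian \eqref{Jac_gen}, and part (2) by differentiating $t\mapsto P(E_{f+t\varphi})$ at $t=0$ under the integral sign. The computations match \eqref{variaz_gen} exactly, and where the paper simply states the result follows "by taking derivatives," you have usefully flagged and addressed the dominated-convergence justification needed because $f$ is only $H^1$.
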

\begin{proof}
	The first formula is a straightforward consequence of the area formula
	\[ P(E_f)=\int_{\bd E_f}\ud \mathcal H^{N-1}=\int_{\bd E}J\Phi \ud \mathcal H^{N-1} \]
	and of the expression of the Jacobian $J\Phi$ in \eqref{Jac_gen}. Now, \eqref{variaz_gen} easily follows by taking the derivatives
	\[ \dfrac{d}{d\varepsilon}\Big|_{\varepsilon=0}  \ P(E_{f+\varepsilon \varphi})\]
	in the first formula.
\end{proof}

In the following, with $C$ we will refer to a positive constant, possibly changing from line to line, and we will specify its explicit dependence when needed.
\begin{oss}
	\label{oss}
	We observe that, if $\|f\|_{L^{\infty}(\bd E)}$ is small enough and $|E_f|=|E|$, then there exists a constant $ C>0$, only depending on $E$, such that
	\begin{equation}
		\left |\int_{\bd E}f(x)\udH(x)\right|\leq  C\int_{\bd E} f(x)^2\udH(x).
		\label{conto media curvatura non nulla}
	\end{equation}  
	Firstly, since $\bd E$ is regular, for every $\varepsilon >0$ sufficiently small there exists a tubular neighborhood $\mathcal{N}$ of $\bd E$ such that $\mathcal{N}$ is diffeomorphic to $\bd E\times (-\varepsilon,\varepsilon)$ via the diffeomeorphism $\Psi(x,t)=x+\nu_E(x)t$.
	The Jacobian of $\Psi$ is given by
	\begin{equation}\label{quiquo}
		J\Psi(x,t)=\prod_{i=1}^{N-1}(1+\kappa_i(x) t). 
	\end{equation} 
	Secondly, if $\|f\|_{L^{\infty}(\bd E)}$ is small enough, we remark that the condition $|E_f|=|E|$ is equivalent to
	\[ 0=|E_f|-|E|=\int_{\bd E}\int_0^{f(x)}J\Psi(x,t)\ud t\ud \mathcal H^{N-1}(x). \] 
	Then, we can conclude that
	\begin{align*}
		0&=\int_{\bd E}\int_0^{f(x)}J\Psi(x,t)\ud t\ud \mathcal H^{N-1}(x)\\
		&=\int_{\bd E}f(x)\udH(x)+\int_{\bd E}\int_0^{f(x)}(J\Psi(x,t)-1) \ud t\ud \mathcal H^{N-1}(x)  \\
		&= \int_{\bd E}f(x)\udH(x)+\int_{\bd E}\int_0^{f(x)} (H_E(x) \, t + o(t))\ud t \udH(x),
	\end{align*}
	that implies \eqref{conto media curvatura non nulla} for a constant depending only on $N$ and the principal curvatures of $E$.
\end{oss}

We are now able to prove the following stability result; it ensures that the second variation of the perimeter remains strictly positive for small normal deformations of a strictly stable set~$E$.
\begin{lemma}
	Fix $N\geq 2$.
	There exists $\delta=\delta(E)>0$ small such that, if $f\in L^{\infty}(\bd E)\cap H^1(\bd E)$ with $\|f\|_{L^\infty(\bd E)}\le \delta$,
	\begin{equation}\label{1988}
		\left\lvert\int_{\bd E} f(x)\ud \mathcal H^{N-1}(x)\right\rvert\leq \delta \|f\|_{L^2(\bd E)} \quad \text{and} \quad
		\left \lvert\int_{\bd E} f(x)\nu_E(x)\ud \mathcal H^{N-1}(x)\right \rvert\leq \delta \|f\|_{L^2(\bd E)},
	\end{equation}
	then we have
	\begin{equation*}
		\delta^2 P(E)[f]=\int_{\bd E}(|\nabla f(x)|^2-|B_E(x)|^2 f(x)^2)\udH(x)\geq \frac{m_0}{8} \|f\|_{H^1(\bd E)}^2,
	\end{equation*}
	where $m_0$ is the constant given by Lemma \ref{Lemma_3.6_AFM}.
	\label{lemma 1.4}
\end{lemma}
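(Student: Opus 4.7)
The plan is to reduce the statement to the coercivity bound of Lemma \ref{Lemma_3.6_AFM_migl} applied to the zero-average part of $f$, and to absorb all cross terms using the smallness of the hypotheses in \eqref{1988}. Concretely, set $\bar f := |\bd E|^{-1}\int_{\bd E} f\udH$ and $\tilde f := f - \bar f$. The first condition in \eqref{1988} gives $|\bar f| \le C(E)\,\delta\|f\|_{L^2(\bd E)}$, so $\tilde f \in \tilde H^1(\bd E)$ is close to $f$ in the $H^1$ norm. Since $\bar f$ is constant, the quadratic form expands as
\[
\delta^2 P(E)[f] \;=\; \delta^2 P(E)[\tilde f] \;-\; 2\bar f\int_{\bd E} |B_E|^2\tilde f\udH \;-\; \bar f^{\,2}\int_{\bd E} |B_E|^2 \udH .
\]

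Next, I would verify the hypotheses of Lemma \ref{Lemma_3.6_AFM_migl} for $\tilde f$. The critical-set identity $\int_{\bd E}\nu_E\udH=0$ (apply the divergence theorem to each constant coordinate field on $\T^N$) yields $\int_{\bd E}\tilde f\nu_E\udH = \int_{\bd E} f\nu_E\udH$, so the second bound in \eqref{1988} gives $|\int_{\bd E}\tilde f\nu_E\udH|\le \delta\|f\|_{L^2}\le\delta\|f\|_{H^1}$. A short computation shows $\|\tilde f\|_{H^1}^2 \ge (1-C\delta^2)\|f\|_{H^1}^2$, hence for $\delta$ small enough $|\int \tilde f\,\nu_E\udH|\le 2\delta\|\tilde f\|_{H^1}$; up to taking $\delta$ smaller than the threshold in Lemma \ref{Lemma_3.6_AFM_migl}, this produces
\[
\delta^2 P(E)[\tilde f] \;\ge\; \tfrac{m_0}{2}\|\tilde f\|_{H^1}^2 \;\ge\; \tfrac{m_0}{4}\|f\|_{H^1}^2 .
\]

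Finally, I would absorb the two cross terms. Using $\|B_E\|_{L^\infty(\bd E)}\le C(E)$ (recall $E$ is smooth) together with $|\bar f|\le C\delta\|f\|_{L^2}$ and $\|\tilde f\|_{L^2}\le \|f\|_{L^2}$, both error terms are bounded by $C(E)\,\delta\|f\|_{L^2}^2\le C(E)\,\delta\|f\|_{H^1}^2$. Choosing $\delta=\delta(E)$ so small that $C(E)\,\delta\le m_0/8$ then yields $\delta^2 P(E)[f]\ge (m_0/4-m_0/8)\|f\|_{H^1}^2=(m_0/8)\|f\|_{H^1}^2$. There is no genuine obstacle: the only point requiring care is that the splitting $f=\tilde f+\bar f$ preserves the second hypothesis of \eqref{1988} for $\tilde f$, which is exactly where the periodic identity $\int_{\bd E}\nu_E\udH=0$ (true because $E$ is a set of finite perimeter in $\T^N$, a compact manifold without boundary) plays the decisive role.
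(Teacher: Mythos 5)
Your argument is correct and follows essentially the same line as the paper's proof: decompose $f$ into its mean $\bar f$ and the zero-average part $\tilde f$, apply the coercivity in Lemma \ref{Lemma_3.6_AFM_migl} to $\tilde f$, and absorb the $\bar f$-cross terms using the first hypothesis in \eqref{1988}. The only cosmetic difference is that you invoke the exact identity $\int_{\bd E}\nu_E\udH=0$ to pass from $\int\tilde f\nu_E$ to $\int f\nu_E$, whereas the paper simply applies the triangle inequality and bounds $|\bar f|\,|\int_{\bd E}\nu_E\udH|$ via $|\bar f|\le C\delta\|f\|_{L^2}$; both variants yield the same conclusion.
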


\begin{proof}
	Set $g=f-\bar f$, where $\bar f=\fint_{\bd E}f \ud \mathcal{H}^{N-1}$, then $g$ has zero average and, by the first inequality in \eqref{1988}, we have
	\begin{equation}\label{nuova1980}
		\bar f^2= \frac{1}{P(E)^2} \left (\int_{\bd E} f \ud \mathcal{H}^{N-1}\right )^2  \le 
		C \delta^2\|f\|^2_{L^2(\bd E)}.
	\end{equation}
	If $\delta$ is sufficiently small, from \eqref{nuova1980} we obtain
	\[\|g\|_{L^2(\bd E)}^2 =\|f-\bar f\|_{L^2(\bd E)}^2= \|f\|_{L^2(\bd E)}^2-\bar f^2 P(E) \geq \|f\|_{L^2(\bd E)}^2\left(1- C\delta^2   \right ) \geq \frac{1}{2}\|f\|_{L^2(\bd E)}^2. \]
	Using the previous inequality, \eqref{nuova1980} again and the second inequality in \eqref{1988} we infer that the function $g$ satisfies
	\[\left\lvert\int_{\bd E} g\nu_E\ud \mathcal H^{N-1}  \right\rvert\leq \left\lvert\int_{\bd E} f\nu_E\ud \mathcal H^{N-1}  \right\rvert+\left\lvert\int_{\bd E} \bar f\nu_E\ud \mathcal H^{N-1}  \right\rvert \leq C \delta\|g\|_{L^2(\bd E)}.\]
	Then, we can apply Lemma \ref{Lemma_3.6_AFM_migl} to obtain
	\[\delta^2 P(E)[g]\geq\dfrac{m_0}2\|g\|_{H^1(\bd E)}^2,\]
	provided $\delta$ small enough. We conclude
	\begin{align*}
		\delta^2P(E)[f]&= \delta^2P(E)[g]-\delta^2 P(E)[g ]+\delta^2 P(E)[f]\\
		&=\delta^2P(E)[g] -2\bar f \int_{\bd E} |B_E(x)|^2f(x) \ud \mathcal{H}^{N-1}(x)+\bar f^2 \int_{\bd E} |B_E(x)|^2 \ud \mathcal{H}^{N-1}(x) \\
		&\geq \dfrac{m_0}2\|g\|_{H^1(\bd E)}^2-C|\bar f | \|f\|_{L^2(\bd E)}
		\ge \dfrac{m_0}2(\|g\|_{L^2(\bd E)}^2+\|\nabla g\|_{L^2(\bd E)}^2)-C\delta \|f\|_{L^2(\bd E)}^2\\
		&\ge \dfrac{m_0}4(\|f\|_{L^2(\bd E)}^2+\|\nabla f\|_{L^2(\bd E)}^2)-C\delta \|f\|_{L^2(\bd E)}^2
		\geq\dfrac{m_0}8 \|f\|_{H^1(\bd E)}^2,
	\end{align*}
	up to taking $\delta$ smaller if needed, and where the constant $C>0$ only depends on $E$.
\end{proof}

\begin{oss}
	Remark \ref{oss} ensures that the conclusion of the previous lemma also holds if we replace the hypothesis $|\int_{\bd E} f\ud \mathcal H^{N-1}|\le \delta \|f\|_{L^2(\bd E)}$ with $\|f\|_{L^\infty(\bd E)}$ small enough and $|E_f|=|E|$.
\end{oss}

We are now able to prove the generalized version of the quantitative Alexandrov's inequality in the periodic setting, Theorem \ref{teo alex}.

\begin{proof}[Proof of Theorem \ref{teo alex}]
	First of all we notice that, if we take the constant $C$ in \eqref{e:aleq_gen} to be bigger than $\sqrt{P(E)/2}$, then it is enough to consider only the case $\|H_{E_f}-\overline H_{E_f}\|_{L^2(\bd E)}\leq 1$.

	Set $p=x+f(x)\nu_E(x)$ and let $\varphi \in C^1(\bd E)$, by the definition of scalar mean curvature $H_{E_f}$ and a change of coordinates we obtain
	\begin{equation}
		\delta P(E_f)[\varphi]=\int_{\bd E} (H_{E_f}\nu_{E_f})(p)\cdot \nu_E\,\varphi\, J\Phi\ud \mathcal H^{N-1}.
		\label{def_variaz_gen}
	\end{equation}
	Combining \eqref{def_variaz_gen}, \eqref{Jac_gen} and \eqref{prod scal vers normali} we obtain
	\[\delta P(E_f)[\varphi]=\int_{\bd E}H_{E_f}\varphi\,J\Phi\left(  1+ \sum_{j=1}^{N-1} \dfrac{(\bd_{v_j} f)^2}{(1+\kappa_j f)^2} \right)^{-1/2} \ud \mathcal H^{N-1}=\int_{\bd E}H_{E_f}\varphi \prod_{i=1}^{N-1}(1+\kappa_i f)\udH.\]
	In the following, with a slight abuse of notation, with the symbol $O(g)$ we will mean any function $h$ of the form $h(x)=r(x)g(x)$, where $|r(x)|\le C$ for all $x\in \bd E$ and $C$ is a constant depending only on $N$ and $E$.

	By a simple Taylor expansion we have
	\begin{equation}
		\delta P(E_f)[\varphi]=\int_{\bd E}H_{E_f}\varphi\,\left(1+H_E f+O(f^2)  \right) \ud \mathcal H^{N-1}. 
		\label{cp_1_1_gen}
	\end{equation}
	From \eqref{variaz_gen} and again by Taylor expansion, we obtain
	\begin{align}
		\delta P(E_f)[\varphi]=&\int_{\bd E} \left (H_E +f\sum_{i=1}^{N-1} \kappa_i \sum_{s\ne i} \kappa_s  + O(f^2)  + O(| \nabla f |^2)\right)\varphi\ud \mathcal H^{N-1}   \nonumber\\
		&+\int_{\bd E}(\nabla f + h) \cdot \nabla \varphi\udH\nonumber\\
		=&  \int_{\bd E} \left( H_E +f H^2_E -|B_E|^2f + O(f^2)  + O(| \nabla f |^2)  \right) \varphi  \ud \mathcal H^{N-1} \nonumber\\
		&+\int_{\bd E}(\nabla f + h) \cdot \nabla \varphi\udH
		\label{cp_2_gen}
	\end{align}
	where $\nabla f,\nabla \varphi$ are respectively the tangent gradient of $f,\varphi$ on $\bd E$  and  $h$ is a vector field satisfying $|h|\leq C(|f|+|\nabla f|^2))|\nabla f|$.
	Set $R=O(f^2)+O(|\nabla f|^2)$, by comparing \eqref{cp_1_1_gen} and \eqref{cp_2_gen} we infer that
	\begin{align}
		\int_{\bd E}(\nabla f\cdot\nabla \varphi-|B_E|^2 f\varphi) \udH\nonumber =&\int_{\bd E}(H_{E_f}-H_E) \left(1+H_E f+ R\right)\varphi\udH \nonumber\\
		&-\int_{\bd E}  (h\cdot\nabla \varphi+(O(f^2)+O(|\nabla f|^2))\varphi) \udH.
		\label{eq_lin3_gen}
	\end{align}
	Testing \eqref{eq_lin3_gen} with $\varphi=1$, we get
	\begin{align*}
		\int_{\bd E} (H_{E_f}-H_E)\left( 1+H_E f + R \right)\udH
		=\int_{\bd E}(O(|f|)+O(|\nabla f|^2))\ud \mathcal H^{N-1},
	\end{align*}
	then, for $\delta$ sufficiently small, using Hölder inequality we obtain 
	\begin{align*}
		\left| \overline H_{E_f}-H_E \right|
		&=\left|-\fint_{\bd E}  (H_{E_f}-H_E)(H_E f+R)\udH + \fint_{\bd E}(O(|f|)+O(|\nabla f|^2))\ud \mathcal H^{N-1}\right|\\
		&\le \left| \fint_{\bd E}  (H_{E_f}-\overline H_{E_f})(H_E f+R)\udH \right| + \left| \fint_{\bd E}  (\overline H_{E_f}-H_E) (H_E f+R)\udH \right| \\
		&\ \ + \int_{\bd E}(O(|f|)  +O(|\nabla f|^2))\ud \mathcal H^{N-1}\\
		&\le \delta\, \dfrac{|H_E|+C\delta}{P(E)}\|H_{E_f}-\overline H_{E_f}\|_{L^2}+\delta  \left(|H_E|+C\delta \right )|\overline H_{E_f}-H_E|\\
		&\ \ + \int_{\bd E}(O(|f|) +O(|\nabla f|^2))\ud \mathcal H^{N-1},
	\end{align*}
	with $C=C(N,E)$ since $\delta\le 1$. 
	For $\delta $ small enough, recalling that $\|H_{E_f} - \overline H_{E_f}\|_{L^2}\leq1$, the previous inequality implies
	\begin{equation}
		\frac 12 |\overline H_{E_f}-H_E|\le  C\delta \|H_{E_f}-\overline H_{E_f}\|_{L^2}+ \int_{\bd E}(O(|f|)  +O(|\nabla f|^2))\ud \mathcal H^{N-1} \le C\delta.
		\label{eq_lin7_gen}
	\end{equation}
	Using the bound $\|f\|_{C^1}\leq \delta$ and the definition of $h$ we easily see that
	\[h\cdot\nabla f=\delta \,O(|\nabla f|^2) . \]
	Testing \eqref{eq_lin3_gen} with $\varphi=f$, using Hölder's inequality and by the previous remark, we get
	\begin{align}
		\label{fine_catena_gen}
		\int_{\bd E} (|\nabla f|^2&-|B_E|^2 f^2)\udH\nonumber = \int_{\bd E}\left(  H_{E_f}-H_E \right)( 1+H_E f + R)f\udH\nonumber\\
		&+\delta\int_{\bd E}(O(f^2) +O(|\nabla f|^2))\ud \mathcal H^{N-1}\nonumber\\
		=& \int (H_{E_f}-\overline H_{E_f})(1+H_E f + R)f\udH\nonumber+\int(\overline H_{E_f}-H_E)(1+H_E f + R)f\udH\nonumber\\
		& +\delta\int_{\bd E}(O(f^2) +O(|\nabla f|^2))\ud \mathcal H^{N-1}\nonumber\\
		\leq&\, C\|H_{E_f}-\overline H_{E_f}\|_{L^2}\|f\|_{L^2}+ \nonumber |\overline H_{E_f}-H_E|\int (1+H_E f + R)f\udH\nonumber\nonumber\\
		&+\delta\int_{\bd E}(O(f^2) +O(|\nabla f|^2))\ud \mathcal H^{N-1}\nonumber\\
		=&\, C\|H_{E_f}-\overline H_{E_f}\|_{L^2}\|f\|_{L^2}+|\overline H_{E_f}-H_E|\int (f+O(f^2)+f O(|\nabla f|^2))\udH\nonumber\\
		&+\delta\int_{\bd E}(O(f^2) +O(|\nabla f|^2))\ud \mathcal H^{N-1}.
	\end{align}
	By \eqref{conto media curvatura non nulla}, \eqref{eq_lin7_gen} and by Hölder inequality, we obtain
	\[|\overline H_{E_f}-H_E|\int (f+O(f^2)+f O(|\nabla f|^2))\udH  \le  \delta \int_{\bd E}(O(f^2)+O(|\nabla f|^2)).\]
	Finally, by the above inequality, \eqref{conto media curvatura non nulla} again and by combining \eqref{fine_catena_gen} with \eqref{eq_lin7_gen} 
	we deduce that, for any $\eta>0$, it holds 
	\begin{align}
		\int_{\bd E}(|\nabla f|^2-|B_E|^2 f^2)\ud \mathcal H^{N-1}&\leq C\|H_{E_f}-\overline H_{E_f}\|_{L^2}\|f\|_{H^1}+\delta \int_{\bd E}(O( f^2)+O(|\nabla f|^2))\ud \mathcal H^{N-1}\nonumber\\
		&\leq \dfrac 1{\eta} C^2\|H_{E_f}-\overline H_{E_f}\|_{L^2}^2+\eta\|f\|_{H^1}^2+C\delta \|f\|_{H^1}^2.
		\label{fine_gen}
	\end{align}
	The conclusion then follows combining \eqref{fine_gen} with Lemma \ref{lemma 1.4} and taking $\delta$ and $\eta$ sufficiently small.
\end{proof}

\begin{oss}
	For some particular choices of the set $E$, a geometric explanation of the condition  
	\begin{equation}
		\left \lvert\int_{\bd E} f\nu_E\ud \mathcal H^{N-1}\right \rvert\leq \delta \|f\|_{L^2}
		\label{condition strange}
	\end{equation}
	can be found. It is the case for the ball, the cylinder or the lamella. For example consider $E=B_r$, the case where $E$ is a cylinder or a lamella being analogous. We show that in this case condition \eqref{condition strange} follows from enforcing 
	\[\text{bar}(E_f)=\text{bar}(B_r)=0.\]
	Indeed, consider the case $r=1$ for simplicity, the barycenter in polar coordinates is given by
	\[0=\dfrac 1{(N+1)\omega_N}\int_{\bd B} (1+f)^{N+1}x\ud\mathcal H^{N+1}\]
	and thus, by a simple Taylor expansion, we obtain
	\begin{align*}
		0&=\int_{\bd B} \left( 1+(N+1)f+\frac 12 R f^2 \right) x \ud \mathcal H^{N-1}\nonumber \\
		&=(N+1)\int_{\bd B} f(x)x   \ud \mathcal H^{N-1} +\frac12\int_{\bd B} xRf^2    \ud \mathcal H^{N-1}
	\end{align*}
	where $ |R(x)|\leq C(N)$ for every $x\in\bd B.$
	We can then estimate
	\[     \left\lvert\int_{\bd B} f(x)x\ud \mathcal H^{N-1}  \right\rvert\leq C\|f\|_{L^2}^2 \] provided $\|f\|_{C^1}\le \delta$ and the conclusion follows recalling $\nu_B(x)=x$.
	
\end{oss}


\section{Uniform $L^1-$estimate on the discrete flow}
\label{section L^1-estimate}
In this section we give the precise definition of the discrete volume preserving flow in the flat torus and we study some of its properties. In particular, we prove Proposition \ref{uniform L1 estimate} that will play a crucial role in the proof of our main result.

\subsection{Discrete volume preserving mean-curvature flow}

Let $E \ne \emptyset$ be a measurable subset of $\T^N$. In the following we will always assume that $E$ coincides with its Lebesgue representative.
Fixed $h>0$, $m\in (0,1)$, we consider the minimum problem
\begin{equation}
	\min \left \{ P(F)+\frac{1}{h} \int_{F} \sd_E(x) \ud x : F \subset \T^N, \,\lvert F \rvert =m \right \},
	\label{pb_min_1}
\end{equation}
where $\sd_E(x):= \dist_E (x)-\dist_{\T^N \setminus E}(x)$ is the signed distance from the set $E$. Observe that the minimum problem \eqref{pb_min_1} is equivalent to the problem 
\begin{equation*}
	\min \left \{ P(F)+\frac{1}{h} \int_{F \triangle E} \dist_{\bd E}(x) \ud x : F \subset \T^N, \,\lvert F \rvert =m \right \}.
\end{equation*}
For every $F \subset \T^N$, we set
\begin{equation}
	J_h^E(F):=P(F)+\frac 1h\int_{F\triangle E}\dist_{\bd E}(x) \ud x =: P(F)+\frac{1}{h} \mathcal D(F,E),
	\label{def funzionale}
\end{equation}
with a little abuse of notation we will sometimes denote by $J_h^E$ also the functional
\[F \mapsto P(F)+\frac{1}{h} \int_{F} \sd_E(x) \ud x\]
and, when no ambiguity arises, we will write  $J_h$ instead of $J_h^E$.

By induction we can now define the \emph{discrete-in-time, volume preserving mean curvature flow} $(E_h^n)_{n\in \N}$ and we will refer to it as the \textit{discrete flow.}
Let $E_0 \subset \T^N$ be a measurable set such that $\lvert E_0 \rvert = m$, we define $E_h^1$ as a solution of \eqref{pb_min_1} with $E_0$ instead of $E$, i.e.
\[E_h^1 \in \textnormal{argmin}\left\{ P(F)+\frac{1}{h} \int_{F} \sd_{E_0}(x) \ud x : F \subset \T^N, \,\lvert F \rvert =m \right \}.\] Assume that $E_h^k$ is defined for $1 \le k \le n-1$, we define $E_h^{n}$ as a solution of \eqref{pb_min_1} with $E$ replaced by $E_h^{n-1}$, i.e.
\[E_h^n \in \textnormal{argmin}\left\{ P(F)+\frac{1}{h} \int_{F} \sd_{E_h^{n-1}}(x) \ud x : F \subset \T^N, \,\lvert F \rvert =m \right \}.\]

\begin{oss}
	\label{oss limitatezza perimetro}
	We start by remarking that the sequence of the perimeters along the discrete flow is non-increasing. Indeed, from the minimality of $E_h^n$ and considering $E^{n-1}_h$ as a competitor we obtain
	\[ P(E_h^n)\le P(E_h^n)+\frac{1}{h}\int_{E^{n-1}_h \triangle E^n_h} \dist_{\bd E^{n-1}_h}(x) \ud x\le P(E^{n-1}_h). \]
	From this simple remark we observe that, even if the starting set of the flow $E_0$ is not of finite perimeter, the perimeters of the sets $E_h^n$ are uniformly bounded by a constant that only depends on the dimension $N$, the fixed volume $m$ and $h$. Given any set $E_0$ of volume $m$, consider the cube $Q_m$ of the same volume. From the minimality of $E_h^1$ and using $Q_m$ as a competitor we obtain 
	\begin{align*}
		P(E^1_h)&\le P(Q_m)+\frac 1h \int_{E_0\triangle Q_m}\dist_{\bd E_0}(x) \ud x-\frac 1h \int_{E_0\triangle E_1^h}\dist_{\bd E_0}(x) \ud x\\
		&\le P(Q_m) + \dfrac 1h\int_{\T^N}\sqrt{N}=C(N,m,h),
	\end{align*}
	where we estimated $\dist_{\bd E_0}\le \text{diam}(\T^N)=\sqrt N$.
\end{oss}

We recall some preliminary results that can be found in \cite{MPS}. If not otherwise stated, their original proofs can be easily adapted to the periodic case, the major difference being that in our case we work in the flat torus, which is compact, thus simplifying some arguments.
First of all, we observe that  that the problem \eqref{pb_min_1} admits solutions via the standard method of the calculus of variations.

The regularity properties of the discrete flow are investigated in the following proposition. Some of the results are classical, others follow from \cite[Proposition 2.3]{MPS}.

\begin{prop}
	\label{proprieta flusso}
	Let $h,$ $m$, $M>0$ and let $E \subset \T^N$ be a set with $\lvert E \rvert =m$ and $P(E)\le M$.  Then, any solution $F \subset \T^N$ to \eqref{pb_min_1} satisfies the following regularity properties:
	\begin{itemize}
		\item[i)] There exist $c_0=c_0(N)>0$ and a radius $r_0=r_0(m,h,N,M)>0$ such that for every $x \in \bd^* F$ and $r \in (0,r_0]$ we have
		\begin{equation*}
			\lvert B_r(x) \cap F \rvert \ge c_0 r^N \quad \text{and} \quad \lvert B_r(x) \setminus F \rvert \ge c_0 r^N.
		\end{equation*}
		In particular, $F$ admits an open representative whose topological boundary coincides with the closure of its reduced boundary, i.e. $\bd F= \overline{\bd^* F}$.
		
		\item[ii)] There exists $\Lambda=\Lambda(m,h,N,M)>0$ such that $F$ is a $\Lambda$-minimizer of the perimeter, that is
		\begin{equation*}
			P(F) \le P(F') + \Lambda \lvert F \triangle F' \rvert
		\end{equation*}
		for all measurable set $F' \subset \T^N$.
		
		\item[iii)] The following Euler-Lagrange equation holds: there exists $\lambda \in \R$ such that for all $X \in C^1_c(\T^N,\T^N)$ we have
		\begin{equation}
			\int_{\bd^*F}\dfrac {\sd_{ E}}h X\cdot \nu_F \ud \mathcal{H}^{N-1}+\int_{\bd^* F} \div_{\tau} X \ud \mathcal{H}^{N-1}=\lambda\int_{\bd^* F} X\cdot \nu_F  \ud \mathcal{H}^{N-1}.
			\label{euler-lagrange equation}
		\end{equation}
		
		\item[iv)] There exists a closed set $\Sigma$,
		whose Hausdorff dimension is less than or equal to $N-8$, such that $\bd^* F= \bd F \setminus \Sigma$ is an $(N-1)$-submanifold of class $C^{2,\alpha}$ for all $\alpha \in (0,1)$ with
		\begin{equation*}
			\lvert H_{F}(x) \rvert \le \Lambda, \quad \text{for all } x \in \bd F \setminus \Sigma.
		\end{equation*}
		
		\item[v)] There exists $k_0=k_0(m,h,N,M)\in \N$ and $s_0=s_0(m,h,N,M)>0$ such that $F$ is made up of at most $k_0$ connected components having mutual Hausdorff distance at least $s_0$.
		
	\end{itemize}
	\label{prop regularity discrete flow}
\end{prop}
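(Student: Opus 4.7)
My plan is to first establish the $\Lambda$-minimality in (ii), as items (i), (iii), and (iv) will follow from it by essentially classical arguments. For (ii), I would start with competitors $F'$ satisfying $|F'|=m$: the minimality of $F$ combined with the pointwise bound $|\sd_E|\le \textnormal{diam}(\T^N)=\sqrt{N}$ yields $P(F)\le P(F')+(\sqrt{N}/h)|F\triangle F'|$ directly. For competitors not satisfying the volume constraint I would use the standard volume-fixing device (cf.~\cite[Thm.~21.14]{Mag}): if $|F\triangle F'|$ is smaller than a threshold depending on $(m,M)$, one can modify $F'$ inside a small ball lying either in $F'$ or in $\T^N\setminus F'$ to match the prescribed volume at a perimeter cost bounded linearly by $\bigl||F'|-m\bigr|$; for larger $|F\triangle F'|$ the inequality is trivial upon taking $\Lambda$ sufficiently large.

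Item (i) is then a routine consequence of $\Lambda$-minimality via the standard density estimates (cf.~\cite[Thm.~21.11]{Mag}), with $r_0\sim 1/\Lambda$, and the characterization $\bd F=\overline{\bd^*F}$ comes from the density lower bounds. For (iii), I would consider volume-preserving deformations $\Phi_t=\textnormal{id}+tX+o(t)$ of $F$ and impose $\frac{d}{dt}\big|_{t=0} J_h^E(\Phi_t(F))=0$: the first variation of the perimeter produces the tangential divergence integral, the derivative of the dissipation term (via change of variables) produces the $\sd_E/h$ contribution, and the volume constraint introduces the multiplier $\lambda$. For (iv), the classical De Giorgi--Almgren--Tamanini regularity theory applied to $\Lambda$-minimizers yields $C^{1,1/2}$ regularity of $\bd^*F$ together with the bound $\dim_{\mathcal H}(\Sigma)\le N-8$; reading (iii) on the regular part as $H_F=\lambda-\sd_E/h$ with $\sd_E$ Lipschitz, a Schauder bootstrap upgrades the regularity to $C^{2,\alpha}$, while the pointwise bound $|H_F|\le \Lambda$ follows after first controlling $|\lambda|$ by testing (iii) against a vector field with $\int_{\bd^*F}X\cdot \nu_F\ne 0$.

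The step I expect to be the main obstacle is (v). The bound on the number of connected components is a quick corollary of (i), since each component must have $\mathcal H^N$-measure at least $c_0 r_0^N$, yielding $k_0\le m/(c_0 r_0^N)$. For the uniform Hausdorff separation I would argue by contradiction: if two components $C_1,C_2$ of $F$ were at distance $s\ll 1$, I would rigidly translate $C_1$ towards $C_2$ by a short vector to produce an admissible competitor with unchanged perimeter and volume, whose dissipation $\mathcal D(\cdot,E)$ changes by at most $C\,s\,P(C_1)$. Continuing the translation until $C_1$ and $C_2$ meet and then merging them yields a strict decrease of the perimeter by a definite amount, quantified through the density estimates near the contact point; for $s$ smaller than a suitable $s_0(m,h,N,M)$ this gain dominates the dissipation cost, contradicting the minimality of $F$.
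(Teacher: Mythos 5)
The paper itself does not prove this proposition: it simply states that "some of the results are classical, others follow from \cite[Proposition~2.3]{MPS}", the compactness of $\T^N$ simplifying several points. Your blind proposal is therefore a genuinely different (self-contained) route, and for items (i)--(iv) it reproduces the standard arguments that would appear in such a proof: $\Lambda$-minimality by volume-fixing and the pointwise bound $\lvert\sd_E\rvert\le\sqrt{N}$; density estimates for $\Lambda$-minimizers; first variation with a Lagrange multiplier; De~Giorgi--Almgren--Tamanini regularity plus a Schauder bootstrap through the relation $H_F=\lambda-\sd_E/h$. These are all correct. One very small inaccuracy: when translating a single component, the change in the dissipation is controlled by the Lipschitz bound on $\sd_E$ as $\lvert v\rvert\,\lvert C_1\rvert/h$, not by $\lvert v\rvert\,P(C_1)$; this is harmless since both quantities are uniformly bounded.

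The genuine gap is in item (v), and you correctly flagged it as the main obstacle. Your translation-then-merge argument does not close. Rigidly translating $C_1$ until it \emph{touches} $C_2$ preserves both volume and perimeter; at the moment of first contact the shared boundary is $\mathcal H^{N-1}$-null, so no perimeter is saved, and you have only paid a positive dissipation cost. To actually gain perimeter you must slide past tangency to create overlap, but then volume is lost and must be restored elsewhere (at perimeter cost $\lesssim\Lambda\cdot(\text{lost volume})$); the perimeter saved and the compensating cost both vanish as the overlap shrinks, and the density estimates at the contact point do not, by themselves, produce a \emph{definite} (i.e.\ $s$-independent) perimeter gain. The comparison therefore becomes a delicate balance of small quantities, not the clean ``fixed gain beats $O(s)$ cost'' that your sketch asserts. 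A correct proof of the uniform separation must be set up more carefully (this is precisely the technical content the paper delegates to \cite[Proposition~2.3]{MPS}); as stated, your contradiction does not go through.
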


The following result characterizes the stationary sets of the discrete scheme.
The last assertion of the proposition is a technical result that will be employed in the proof of Lemma~\ref{L^inf estimate}.
\begin{prop}
	\label{prop 3.1 rivisitata}
	Every stationary set $E$ for the discrete flow is a critical set of the perimeter.
	\\
	Viceversa, if $E$ is a \rosso{regular} critical set of the perimeter, then there exists $h^*=h^*(E)>0$ such that, for every $h<h^*$, the volume preserving discrete flow starting from $E$ is unique and given by $E_h^n=E.$
	Moreover, if $E$ is a strictly stable set then it is also the unique volume-constrained minimizer of the functional
	\begin{equation*}
		\tilde J_h(F):= P(F)+\frac{1}{h}\int_F \dist_E(x) \ud x .
	\end{equation*}
\end{prop}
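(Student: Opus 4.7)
My plan is to treat the three assertions in sequence, with each step reusing tools from the previous. For the first implication, assume $E$ is stationary, i.e.\ $E_h^{n+1}=E$ is a solution of the minimization problem \eqref{pb_min_1} with $E_h^n=E$. Applying the Euler--Lagrange equation \eqref{euler-lagrange equation} to $F=E$ I observe that $\sd_E\equiv 0$ on $\bd^* E$, so the dissipation term drops and we are left with
\[\int_{\bd^* E}\div_\tau X\,\ud\mathcal H^{N-1}=\lambda\int_{\bd^* E}X\cdot\nu_E\,\ud\mathcal H^{N-1}\]
for every $X\in C^1_c(\T^N,\T^N)$. The tangential divergence theorem rewrites the left-hand side as $\int_{\bd^*E}H_E(X\cdot\nu_E)\,\ud\mathcal H^{N-1}$, which forces $H_E\equiv\lambda$ a.e.\ on $\bd^*E$; hence $E$ is a critical set of the perimeter.

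For the converse, let $E$ be a regular critical set and let $F$ be any minimizer of $J_h^E$ at volume $|E|$. Testing with the competitor $E$ gives $P(F)\le P(E)$ and $\mathcal D(F,E)\le hP(E)$. As $h\to 0$, $\mathcal D(F,E)\to 0$ and, since $\dist_{\bd E}>0$ off a null set, $BV$-compactness plus Fatou's lemma forces $F\to E$ in $L^1$. Combining Proposition~\ref{prop regularity discrete flow}(ii) with Theorem~\ref{teorema convergenza C^1} and a bootstrap via the Euler--Lagrange equation $H_F+\sd_E/h=\lambda$ yields that for $h<h^*$ we may write $F=E_f$ with $\|f\|_{C^{2,\alpha}}$ arbitrarily small. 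I then Taylor-expand $J_h^E(E_f)-P(E)$: criticality of $E$ together with Remark~\ref{oss} (which ensures $|\int_{\bd E}f|\le C\|f\|_{L^2}^2$ under the volume constraint) makes the first-order contribution negligible; the second-order expansion of the perimeter yields $\tfrac{1}{2}\delta^2P(E)[f]$, and the tubular-coordinate formula \eqref{quiquo} gives $\mathcal D(E_f,E)=\tfrac{1}{2}\|f\|_{L^2(\bd E)}^2+O(\|f\|_{L^3}^3)$. Since $\delta^2 P(E)[f]\ge -\|B_E\|_\infty^2\|f\|_{L^2}^2$, we obtain
\[J_h^E(E_f)-P(E)\;\ge\;\tfrac{1}{2}\|\nabla f\|_{L^2}^2+\Bigl(\tfrac{1}{2h}-\tfrac{1}{2}\|B_E\|_\infty^2\Bigr)\|f\|_{L^2}^2-C\|f\|_{C^1}\|f\|_{H^1}^2,\]
which is strictly positive unless $f\equiv 0$ once $h<h^*$ is chosen small enough relative to $\|B_E\|_\infty$ and the $C^2$-geometry of $E$. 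Hence $F=E$, proving uniqueness of the flow.

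For the final statement, let $E$ be strictly stable. The direct method ensures a minimizer $F_h$ of $\tilde J_h$ exists, and $\tilde J_h(F_h)\le\tilde J_h(E)=P(E)$ gives $P(F_h)\le P(E)$ and $\int_{F_h\setminus E}\dist_E\le hP(E)$. Repeating the $L^1$-compactness argument above (any cluster point $F_0$ satisfies $|F_0\setminus E|=0$ and $|F_0|=|E|$, hence $F_0=E$), we get $F_h\to E$ in $L^1$ and therefore $\alpha(F_h,E)\to 0$. For $h<h^*$ this puts $F_h$ within the range of validity of Theorem~\ref{coroll 1.2}, giving $P(F_h)\ge P(E)$, while $\int_{F_h\setminus E}\dist_E>0$ unless $|F_h\setminus E|=0$ (equivalently, thanks to the volume constraint, $F_h=E$ a.e.). Combining, $\tilde J_h(F_h)>P(E)$ unless $F_h=E$, proving uniqueness. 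The main obstacle is in step two: the $\Lambda$-minimality constant from Proposition~\ref{prop regularity discrete flow}(ii) degenerates like $1/h$, so obtaining $C^{1,\beta}$-closeness uniformly as $h\to 0$ requires care; moreover, the quadratic error terms must be tracked with sufficient precision so that the positive dissipation $\tfrac{1}{2h}\|f\|_{L^2}^2$ dominates the (possibly indefinite) second variation $\delta^2 P(E)[f]$ in every direction, yielding a quantitative threshold $h^*$ that depends only on the $C^2$-geometry of $E$.
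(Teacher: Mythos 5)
Your plan matches the paper's proof closely in structure. \textbf{Part 1} (stationary $\Rightarrow$ critical) is identical: with $\sd_E\equiv 0$ on $\bd^*E$, the Euler--Lagrange equation \eqref{euler-lagrange equation} collapses to the criticality condition for the perimeter. \textbf{Part 3} (strict stability $\Rightarrow$ unique minimizer of $\tilde J_h$) is also essentially the paper's argument: testing with $E$ gives $\mathcal D(F_h,E)\le hP(E)$ and $P(F_h)\le P(E)$, hence $F_h\to E$ in $L^1$ as $h\to 0$; for $h<h^*$ one then lands in the threshold of Theorem~\ref{coroll 1.2}, and combining $P(F_h)\ge P(E)$ with $\int_{F_h}\dist_E\ge 0$ (equality only when $F_h\subset\overline E$) forces $F_h=E$. \textbf{Part 2} also runs on the same mechanism the paper cites from \cite[Proposition~3.2]{MPS} — the $1/h$ term makes $\bd^2 J^E_h(E)$ strictly positive once $h$ is small, so any sufficiently $C^{2,\alpha}$-small normal deformation of $E$ has strictly larger $J_h^E$-energy — and your Taylor expansion of $J_h^E(E_f)-P(E)$ is correct.

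The only genuine gap, and you flag it yourself, is the step that turns $L^1$-closeness into $C^{2,\alpha}$-closeness \emph{uniformly in $h$}: the $\Lambda$-minimality constant in Proposition~\ref{prop regularity discrete flow}\textit{(ii)} scales like $1/h$, so the admissible $L^1$-threshold $\delta(\Lambda,E)$ in Theorem~\ref{teorema convergenza C^1} shrinks as $h$ decreases, and one must check that $|E\triangle F_h|\to 0$ at a compatible rate. You name this as "an obstacle" but do not close it. The paper does not close it explicitly either; it is delegated to \cite[Proposition~3.2]{MPS}, where the correct scaling (Hausdorff closeness of $\bd F_h$ to $\bd E$, hence a uniform $L^\infty$ bound on $\sd_E/h$ restricted to $\bd F_h$, hence a uniform mean-curvature bound on $F_h$) is established before the second-variation argument is invoked. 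So your route is the right one, but as written the second part rests on an unproved regularity assertion rather than on a finished argument.
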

\begin{proof}
	The first statement is an immediate consequence of \eqref{euler-lagrange equation}. Since $E$ is a stationary point for the discrete flow, it satisfies
	\begin{equation*}
		\int_{\bd^* E}\div_{\tau} X \ud \mathcal{H}^{N-1} =
		\lambda \int_{\bd^* E} X \cdot \nu_E\ud \mathcal{H}^{N-1}
	\end{equation*}
	for all $X \in C^1_c(\T^N,\T^N)$, i.e. $E$ is a critical point for the perimeter.

	The second part follows using the same argument of the proof of \cite[Proposition~3.2]{MPS}. Indeed, recall that the second variation has the following expression
	\[ \bd^2 J_h(E)[\varphi]=\int_{\bd E} |\nabla \varphi|^2 + \left(  \dfrac 1h -|B_E|^2 \right) \varphi^2\udH, \]
	which is positive if $h$ is small enough. Then we procede as in the proof of  \cite[Proposition~3.2]{MPS}.

	Analogously, we prove that $E$ is the unique volume-constrained minimizer of $\tilde J_h$. Firstly, observe that, by Theorem \ref{coroll 1.2}, $E$ is a strict local $L^1$-minimizer of the perimeter and it is a global minimizer of the second term in $\tilde J_h$. Therefore, there exists $\varepsilon>0$ such that \[\tilde J_h(E) <\tilde J_h(F) \] for all measurable set $F$ such that $|F|=|E|$ and $|E \triangle F|\le \varepsilon$, i.e. $E$ is an isolated local minimizer for $\tilde J_h$ in $L^1$ with the volume constraint, with minimality neighbourhood uniform with respect to $h$. Now, given any sequence $(h_n)_{n \in \N}$ going to zero, let $F_n$ be a volume constrained minimizer of $J_{h_n}$; we then easily deduce that $|E \triangle F_n| \to 0$ as $n \to \infty$, and therefore, for $n$ large enough, $|E\triangle F_n|\le \varepsilon$. The strict minimality of $E$ therefore implies  $F_n=E$.

\end{proof}


\subsection{Uniform $L^1$ estimate}

In this subsection we prove a uniform $L^1-$estimate on the discrete flow starting from an initial set $E_0$ sufficiently \virg{close} to a strictly stable set of the perimeter. We will devote the next subsection to a discussion upon the hypotheses of the estimate. Before we recall the definition of Hausdorff distance and some of its properties, for a complete reference see e.g. \cite[Section~4.4]{AmT}, \cite[Section~10.1]{MoSo}.

Given a set $C \subset \T^N$, we denote by $(C)_{\delta}$  the $\delta$ fattened of $C$, that is the set 
\[\{x \in \T^N: \dist_C(x)\le \delta  \}.\]
Let $C_1,$ $C_2 \subset \T^N$ be closed sets, we define the \emph{Hausdorff distance} between $C_1$ and $C_2$ as 
\[\ud_H(C_1,C_2):= \inf \left \{\rho>0: C_1 \subset (C_2)_{\rho}, \, C_2 \subset (C_1)_{\rho} \right \}.\]
Given $C_n,$ $C$ closed sets in $\T^N$, we say that $(C_n)_{n \in \N}$ converges to $C$ in the Hausdorff distance and we write $C_n \overset{H}{\to} C$, if $\ud_H(C_n,C)\to 0$ as $n \to \infty$. We recall that the space of closed subsets of a compact set equipped with the Hausdorff metric is compact (see e.g \cite[Theorem~4.4.15]{AmT}  or \cite[Proposition~10.1]{MoSo}) and also that the convergence in the Hausdorff distance is equivalent to the uniform convergence of the respective distance functions, i.e. 
\[ C_n\overset{H}{\to}C\quad \iff \quad \dist_{C_n}\to \dist_{C}\quad \text{uniformly}. \]
In the following, given two open smooth sets $E_1$, $E_2$, we will denote by $\ud_H(E_1,E_2)$ the Hausdorff distance between their closures.

\begin{lemma}
	\label{L^inf estimate}
	Let $E \subset \T^N$ be a strictly stable set and let $\varepsilon>0$. Then, there exist $\delta=\delta(\varepsilon, E)>0$ and $h^*=h^*(E)>0$ such that, for every $h < h^*$ and for every set $E_0$ satisfying
	\[|E_0| =|E|, \qquad \ud_H(\overline{E}_0,\overline{E}) \le\delta,\]
	we have
	\[|E \triangle F| \le \varepsilon, \]
	where $F$ is a solution of \eqref{pb_min_1} with $E_0$ replacing $E$.
\end{lemma}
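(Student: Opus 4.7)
The plan is to argue by contradiction and compactness. Choose $h^*=h^*(E)$ to be the constant given by Proposition \ref{prop 3.1 rivisitata}, and assume the conclusion of the lemma fails. Then there exist $\varepsilon_0>0$, sequences $\delta_n\to 0$, $h_n\in(0,h^*)$, sets $E_0^n\subset\T^N$ with $|E_0^n|=|E|$ and $\ud_H(\overline{E_0^n},\overline E)\le\delta_n$, and minimizers $F_n$ of \eqref{pb_min_1} with $E_0^n$ in place of $E$, satisfying $|F_n\triangle E|>\varepsilon_0$.

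The first task is to collect the preliminary estimates. The inclusion $\overline{E_0^n}\subset(\overline E)_{\delta_n}$ together with $|E_0^n|=|E|$ and the regularity of $E$ yields $|E_0^n\triangle E|\to 0$. A geometric argument, exploiting both directions of the Hausdorff inequality (for $x\in E_0^n\setminus E$ one moves along the outer normal to $\bd E$ to find a nearby point outside $(\overline E)_{\delta_n}$, hence outside $\overline{E_0^n}$), gives the pointwise bound $\dist_{\bd E_0^n}(x)\le C\delta_n$ for every $x\in E\triangle E_0^n$. Consequently $\mathcal D(E,E_0^n)\le C\delta_n^2$. Testing the minimality of $F_n$ against $E$ (allowed since $|E|=|E_0^n|$) then produces
\begin{equation*}
h_n P(F_n)+\mathcal D(F_n,E_0^n)\le h_n P(E)+\mathcal D(E,E_0^n)\le h_n P(E)+C\delta_n^2.
\end{equation*}

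The second step is a case analysis after passing to a subsequence with $h_n\to h_\infty\in[0,h^*]$. If $h_\infty>0$, the previous inequality bounds $P(F_n)\le P(E)+o(1)$, and $BV$-compactness provides $F$ with $F_n\to F$ in $L^1$, $|F|=|E|$, and $P(F)\le P(E)$. Passing to the limit in the minimality inequality, $F$ is a volume-constrained minimizer of $J_{h_\infty}^E$; Proposition \ref{prop 3.1 rivisitata} then forces $F=E$, contradicting $|F_n\triangle E|\ge\varepsilon_0$. If $h_\infty=0$, the same inequality gives $\mathcal D(F_n,E_0^n)\to 0$, and I would deduce $|F_n\triangle E_0^n|\to 0$, whence $|F_n\triangle E|\le|F_n\triangle E_0^n|+|E_0^n\triangle E|\to 0$, again a contradiction.

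The hard part will be handling the term $\mathcal D(F_n,E_0^n)$ in the limit. Hausdorff closeness of $E_0^n$ to $E$ does not imply Hausdorff closeness of boundaries (small bubbles or handles of $E_0^n$ inside $E$ create portions of $\bd E_0^n$ far from $\bd E$), so $\dist_{\bd E_0^n}$ need not converge uniformly to $\dist_{\bd E}$. To recover the required lower semicontinuity of $\mathcal D(\cdot,E_0^n)$ under $L^1$ convergence I would decompose $F_n\triangle E_0^n$ into its portion near $\bd E$ (where the relevant distance functions are comparable up to $C\delta_n$) and its interior portion (whose measure is controlled by $|E_0^n\triangle E|\to 0$ thanks to the regularity of $E$). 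For the case $h_\infty=0$ I would additionally leverage the $\Lambda_n$-minimality and density estimates of Proposition \ref{prop regularity discrete flow}, combined with a coarea-type argument, in order to convert $\mathcal D(F_n,E_0^n)\to 0$ into $|F_n\triangle E_0^n|\to 0$ by controlling the measure of the relevant tubular neighborhoods of $\bd E_0^n$.
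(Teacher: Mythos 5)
Your overall strategy (contradiction plus compactness, using Proposition \ref{prop 3.1 rivisitata} to identify the limit as $E$) is in the same spirit as the paper's, and the preliminary estimates $|E_0^n\triangle E|\to 0$ and $\mathcal D(E,E_0^n)\le C\delta_n^2$ are fine; but the crux of the argument has a genuine gap that your own plan does not close.

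The unjustified step is ``passing to the limit in the minimality inequality, $F$ is a volume-constrained minimizer of $J_{h_\infty}^E$.'' Write $\sd_{E_0^n}=\dist_{E_0^n}-\dist_{(E_0^n)^c}$. The Hausdorff hypothesis gives $\overline{E_0^n}\to\overline E$, hence $\dist_{E_0^n}\to\dist_E$ uniformly, but it says nothing about the complements: along a subsequence $\overline{(E_0^n)^c}\overset{H}{\to}K$ for some closed set $K$ which contains $(\overline E)^c$ but may be \emph{strictly larger} than $\overline{E^c}$ (for instance, if $E_0^n$ is $E$ with small holes deep in its interior, points of those holes survive in $K$). Consequently $\sd_{E_0^n}\to\dist_E-\dist_K$ uniformly, and the functional inherited in the limit is not $J_{h_\infty}^E$ but $\bar J_{h_\infty}(G)=P(G)+\frac{1}{h_\infty}\int_G(\dist_E-\dist_K)$. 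Neither clause of Proposition \ref{prop 3.1 rivisitata} applies directly to $\bar J_{h_\infty}$. The paper's proof addresses exactly this: it observes $K^c\subset\overline E$, deduces $\int_G\dist_K\le\int_E\dist_K$ for every $G$, and combines this with the strict minimality of $E$ for $\tilde J_h(G)=P(G)+\frac1h\int_G\dist_E$ (the third clause of the proposition) to conclude that $E$ is still the unique volume-constrained minimizer of $\bar J_h$. Without this device, identifying $F=E$ does not follow.

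Your proposed repair for the ``hard part'' (splitting $F_n\triangle E_0^n$ into a near-$\bd E$ piece and an interior piece) does not touch the real difficulty, which is the \emph{identification of the limit functional}, not a size estimate on the dissipation. Inside $E$ the distance $\dist_{\bd E_0^n}$ can be arbitrarily small (dense small holes), so one cannot assert that the interior part of the dissipation behaves like $\dist_{\bd E}$; the integral $\int_{F_n\triangle E_0^n}\dist_{\bd E_0^n}$ really does converge to $\int_{F\triangle ?}(\dist_E-\dist_K)$ with an extra $K$-dependence. For the same reason the case $h_\infty=0$ is also not closed by ``$\mathcal D(F_n,E_0^n)\to 0\Rightarrow |F_n\triangle E_0^n|\to 0$'': when $\bd E_0^n$ accumulates throughout $E$, small dissipation does not force small symmetric difference. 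Finally, note that the paper keeps $h$ fixed throughout the proof (and quantifier order in the statement should be read that way), so introducing a possibly vanishing sequence $h_n$ is an unnecessary complication.
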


\begin{proof} 
	Let $h^*=h^*(E)$ be the constant given by Proposition \ref{prop 3.1 rivisitata} so that, for every $h< h^*$, $E$ is the unique volume-constrained global minimizer of the functional 
	\begin{equation}
		\label{E_min_0}
		\tilde J_h(G):= P(G)+\frac{1}{h}\int_G \dist_{E}(x) \ud x.
	\end{equation}
	Fix $h<h^*$ and let $(E_n)_{n \in \N}$ be a sequence of sets satisfying
	\begin{equation}
		|E_n|=|E|, \qquad \overline{E}_n \overset{H}{\to} \overline{E}.
		\label{ipotesi}  
	\end{equation}
	Consider $F_n$ a solution of \eqref{pb_min_1} with $E_n$ replacing $E$. We claim that
	\begin{equation*}
		F_n \overset{L^1}\to E.
	\end{equation*} 
	If we prove the claim, the conclusion easily follows.

	First, Remark \ref{oss limitatezza perimetro} ensures that  $(F_n)_{n \in \N}$ is a sequence of  sets with uniformly bounded perimeters, with the bound depending only on $N,m,h$. Therefore, there exist $F$ a set of finite perimeter such that $|F|=m$ and a (unrelabelled) subsequence of $(F_n)_{n \in \N}$ such that 
	\begin{equation*}
		F_n\overset{L^1}\to F . 
	\end{equation*}
	Now, let $K$ be a compact subset of $\T^N$ such that, up to a subsequence, we have
	\begin{equation*}
		\overline{E_n^c} \overset{H}{\to} K.
	\end{equation*}
	From the second property in \eqref{ipotesi} we easily deduce that $(\overline E)^c \subset K$, and therefore $K^c\subset \overline E$. In particular, this inclusion implies that
	\begin{equation*}
		\int_{K^c} \dist_K(x) \ud x= \int_E \dist_K(x) \ud x \ge \int_G \dist_K(x) \ud x
	\end{equation*}
	for every $G \subset \T^N$.
	Setting 
	\[\bar{J}_h(G):=P(G)+\frac{1}{h} \int_G (\dist_E(x)-\dist_{K}(x))\ud x, \]
	from the previous remark and from the fact that $E$ is the unique minimizer of \eqref{E_min_0}, we have
	\begin{align*}
		\bar{J}_h(G)&= \tilde J_h(G)-\frac{1}{h}\int_G  \dist_K(x)\ud x\\
		&>\tilde J_h(E)-\frac{1}{h}\int_G  \dist_K(x)\ud x\\
		&\ge \tilde J_h(E)-\frac{1}{h}\int_E \dist_K(x)\ud x=\bar{J}_h(E),
	\end{align*}
	for any measurable set $G\subset \T^N$ with $|G|=|E|$. Finally, we obtain
	\begin{align*}
		\bar{J}_h(F)&=P(F)+\frac 1h \int_{F} \left(\dist_E( x)-\dist_{K}(x)\right) \ud x\\
		&\le \liminf_{n\to \infty} P(F_n) +\frac 1h \int_{F} \left(\dist_{E}( x)-\dist_{K}(x)\right) \ud x\\
		&=\liminf_{n \to \infty} \left( P(F_n)+\frac 1h \int_{F_n} \left(\dist_{E_n}( x)-\dist_{E_n^c}(x)\right) \ud x \right)\\
		&\le \liminf_{n\to \infty} \left( P(E)+\frac 1h \int_{E} \left(\dist_{E_n}( x)-\dist_{E_n^c}(x)\right) \ud x \right)\\
		&=P(E)-\frac{1}{h}\int_E \dist_{K}(x)\ud x=\bar{J}_h(E)
	\end{align*}
	where we exploited the lower-semicontinuity of the perimeter and the minimality of $F_n$.
	Since $E$ is the unique volume-constrained minimizer of $\bar{J}_h$, the set $F$ must coincide with $E$ and this concludes the proof.
\end{proof}

\begin{oss}
	\label{solo_una_inc}
	We remark that under the hypotheses of Lemma \ref{L^inf estimate} we could have just assumed the one-sided inclusion
	\[\overline{E}_0 \subset (E)_{\delta^*} \]
	instead of \[\ud_H(\overline{E}_0,\overline{E})\le \delta\]
	for a suitable $\delta^*\le \delta$. Indeed, let $\delta_n \to 0$ and $E_n\subset (E)_{\delta_n}$ such that $|E_n|=|E|$. We prove that $\overline{E}_n$ converges to $\overline{E}$ in the sense of Kuratowski (and thus with respect to Hausdorff).
	Let $(x_n)_{n \in \N}$ be a sequence such that $x_n\in \overline{E}_n$ and $x_n \to y$. For every $n\in\N$, there exists $y_n\in E$ such that  $|x_n-y_n|\le \delta_n.$ Therefore, for any $\varepsilon>0$ there exists $n_0$ such that, for $n\ge n_0$, we have
	\[ |y_n-y|\le |y_n-x_n|+|x_n-y|\le     \delta_n+\varepsilon,\]
	that is $y_n\to y$. Since $(y_n)_{n \in \N}\subset E$, we have $y\in \overline{E}$.
	
	Fix now $y\in \overline{E}$. Assume by contradiction that there exists $\delta>0$ such that $\dist_{E_n} (y)> \delta$, i.e. it doesn't exist a sequence of elements in $\overline{E}_n$ converging to $y$. From this (and up to subsequences) it follows  \[E_n\subset (E)_{\delta_n}\setminus B_\delta (y)\qquad \forall n \in \N.\]
	Thus we have
	\begin{align*}
		m&= \lim_{n \to \infty}|E_n|\le \lim_{n \to \infty}|(E)_{\delta_n}\setminus B_\delta (y)|\\
		&\le \lim_{n \to \infty} |(E)_{\delta_n}\setminus \left(  B_\delta(y)\cap E \right)|\\
		&=\lim_{n \to \infty}|(E)_{\delta_n}|-| B_\delta(y)\cap E| = m - | B_\delta(y)\cap E|
	\end{align*}   
	which is a contradiction.
\end{oss}

We are now able to prove the main estimate that will be used in the proof of Proposition \ref{teorema convergenza a meno di traslazioni}.

\begin{prop}[Uniform $L^1-$estimate] 
	\label{uniform L1 estimate}
	Let $E \subset \T^N$ be a strictly stable set. Then, for every $\varepsilon >0$ there exist $\delta^*=\delta^*(\varepsilon, E) >0$ and $h^*=h^*(E)>0$ with the following property: for every $h <  h^*$, if $E_0$ is a measurable set such that 
	\begin{equation*}
		|E_0| =|E|, \qquad \overline{E}_0\subset(E)_{\delta^*},
	\end{equation*} 
	then the discrete flow $(E_h^n)_{n \in \N}$ starting from $E_0$ satisfies \[\alpha(E,E^n_h) \le \varepsilon\]
	for every $n \in \N$ .
	
\end{prop}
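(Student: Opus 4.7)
The plan is to combine three tools: the one-sided $L^\infty$-estimate of Lemma \ref{L^inf estimate} (in the form of Remark \ref{solo_una_inc}), the $C^{1,\beta}$-regularity of $\Lambda$-minimizers from Theorem \ref{teorema convergenza C^1}, and the uniform $\Lambda$-minimality of the discrete-flow steps from Proposition \ref{prop regularity discrete flow}. The core argument is an induction on $n$ showing that $\overline{E_h^n}\subset (E)_{\delta^*}$ for every $n\ge 0$; together with $|E_h^n|=|E|$ and a tubular-neighbourhood estimate, this gives $\alpha(E,E_h^n)\le|E\triangle E_h^n|\le C(E)\delta^*\le \varepsilon$ once $\delta^*$ is chosen small enough.

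Concretely, given the target $\varepsilon$, I would fix an auxiliary scale $\rho\in(0,\varepsilon]$ (to be determined) and take $\delta^*=\delta^*(\rho,E)$, $h^*=h^*(E)$ from Lemma \ref{L^inf estimate} and Remark \ref{solo_una_inc}, so that, for $h<h^*$, the inclusion $\overline{E_h^{n-1}}\subset(E)_{\delta^*}$ together with $|E_h^{n-1}|=|E|$ implies $|E\triangle E_h^n|\le \rho$. The inductive step also needs the converse passage: from the $L^1$-bound $|E\triangle E_h^n|\le \rho$, one should recover $\overline{E_h^n}\subset (E)_{\delta^*}$. This is where the $\Lambda$-minimality of $E_h^n$ and Theorem \ref{teorema convergenza C^1} come in, yielding $\rho'=\rho'(\rho,\Lambda,E)\to 0$ as $\rho\to 0$ such that $\ud_H(\overline{E_h^n},\overline{E})\le \rho'$; picking $\rho$ small enough that $\rho'\le \delta^*$ closes the induction. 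The base case $n=0$ is handled by the tubular-neighbourhood estimate $|E\triangle E_0|\le C(E)\delta^*$, which also provides $\overline{E_0}\subset (E)_{\delta^*}$ by hypothesis.

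The main obstacle is the uniformity of the constants in $h\in(0,h^*)$: the $\Lambda$-constant in Proposition \ref{prop regularity discrete flow} scales like $1/h$, so the Hausdorff threshold supplied by Theorem \ref{teorema convergenza C^1} deteriorates as $h\to 0$, whereas the statement demands $\delta^*=\delta^*(\varepsilon,E)$ independent of $h$. I would address this by revisiting the contradiction argument behind Lemma \ref{L^inf estimate}, which is itself uniform in $h$ since it relies only on $L^1$-compactness with bounded perimeter and the uniqueness statement of Proposition \ref{prop 3.1 rivisitata}: adding the density estimates of Proposition \ref{prop regularity discrete flow} and the smoothness of the limit $E$ into that compactness step should produce the one-sided Hausdorff control simultaneously with the $L^1$-control, bypassing Theorem \ref{teorema convergenza C^1} altogether. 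An alternative, more robust route combines the monotonicity of perimeters (Remark \ref{oss limitatezza perimetro}) with a strengthened Lemma \ref{L^inf estimate} giving also $P(E_h^1)\le P(E)+\eta$: then the stability inequality $C\alpha^2(E,F)\le P(F)-P(E)$ of Theorem \ref{coroll 1.2} forces $\alpha(E,E_h^n)\le\sqrt{\eta/C}\le\varepsilon$ for every $n$, provided one maintains by induction the non-escape condition $\alpha(E,E_h^n)<\sigma(E)$.
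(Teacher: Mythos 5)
Your second alternative is the closest to the paper's actual proof, and it contains the right ingredients: a competitor estimate of the form $P(E_h^1)-P(E)\le \tfrac{\sqrt N}{h}\,\alpha(E_0,E)$ (obtained by testing the minimality of $E_h^1$ against a suitable translate of $E$, not by ``strengthening'' Lemma~\ref{L^inf estimate}), the monotonicity of perimeters along the discrete flow, and the stability inequality of Theorem~\ref{coroll 1.2} applied to each $E_h^n$. What is missing, and where the proposal has a genuine gap, is the mechanism for maintaining the ``non-escape'' hypothesis $\alpha(E,E_h^n)<\sigma$ that Theorem~\ref{coroll 1.2} requires. You cannot derive it from the inequality itself, since the inequality only applies once $\alpha<\sigma$ is already known, and a single step of the discrete scheme has no a priori reason to keep the minimizer $L^1$-close to the previous set. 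The paper closes this loop exactly through the chain you were hoping to avoid: from $\alpha(E_h^{n-1},E)$ small, $\Lambda$-minimality of $E_h^{n-1}$ and Theorem~\ref{teorema convergenza C^1} give $\ud_H(\bd E_h^{n-1},\bd E+x_{n-1})\le\delta$, and then Lemma~\ref{L^inf estimate} applied with $E+x_{n-1}$ in place of $E$ yields $|E_h^n\triangle(E+x_{n-1})|\le\sigma$, which is precisely the non-escape condition for the next step. So the claim of ``bypassing Theorem~\ref{teorema convergenza C^1} altogether'' is not realized; the paper invokes it at every inductive step.

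Your concern about the $\Lambda$-constant scaling like $1/h$ and degrading the regularity threshold is a legitimate observation, but the paper does not confront it the way you propose: it simply fixes $h<h^*$ at the outset and then chooses $\delta^*$, and the $\delta^*$ that the argument produces in fact satisfies $\delta^*\lesssim h$. As for your first alternative (building the one-sided Hausdorff control directly into the compactness argument of Lemma~\ref{L^inf estimate} using the density estimates of Proposition~\ref{prop regularity discrete flow}), it is a plausible idea, and it is morally a rederivation of the $L^1$-to-$C^{1,\beta}$ improvement of Theorem~\ref{teorema convergenza C^1} rather than an independent bypass; in any case you do not carry it out, so as written it does not supply the missing inductive step.
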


\begin{proof}
	Fix $h< h^*$, where $h^*=h^*(E)$ is the constant given by Lemma \ref{L^inf estimate} and let $\sigma=\sigma(E)$, $C=C(E)$ be the constants of Theorem \ref{coroll 1.2}.
	Moreover, let
	$\delta:= \delta(\sigma,E)$ be the constant given by Lemma \ref{L^inf estimate} with  $\sigma$ replacing  $\varepsilon$. 
	Set $\delta^*\le \delta$ to be chosen later and consider $E_0$ such that 
	\[|E_0| =|E|, \qquad \overline{E}_0 \subset (E)_{\delta^*}.\]
	Recall that, from Remark \ref{solo_una_inc} and from the hypothesis $\overline{E}_0 \subset (E)_{\delta^*}$, without loss of generality, we can assume $\ud_H(\overline{E}_0,\overline{E}) \le \delta^*$.
	Moreover, by the regularity of $E$, we can also suppose $\alpha(E_0,E)\le \tilde C\delta^*$, for a suitable constant $\tilde C>0$ that only depends on $E$.
	From Lemma \ref{L^inf estimate} we have that
	\begin{equation}
		|E_h^1\triangle E| \le \sigma.
		\label{siamo vicini}
	\end{equation}
	Let $x_0$ be such that $\alpha (E_0,E)= \lvert E_0 \triangle (E+x_0) \rvert$. By choosing $E+x_0$ as a competitor for the minimality of $E_h^1$ and estimating dist$_{\bd E_0}\leq \text{diam}(\T^N)= \sqrt{N}$, we find
	\[ P(E_h^1)-P(E)\leq \dfrac{1} h\int_{E_0 \triangle (E+x_0)} \dist_{\bd E_0}(x) \ud x \le \dfrac{\sqrt{N}} h \alpha (E_0, E)\le \frac{\sqrt{N}}{h} \tilde C\delta^*.  \]
	By \eqref{siamo vicini}, we can apply Theorem \ref{coroll 1.2} and the previous estimate to obtain
	\begin{align*}
		\alpha(E_h^1,E)\leq \dfrac 1{\sqrt C}\sqrt{P(E_h^1)-P(E)}\le \dfrac 1{\sqrt C}\sqrt{\frac{\sqrt{N}}{h} \alpha(E,E_0)}\leq \dfrac 1{\sqrt C}\sqrt{\frac{\sqrt{N}}{h}  \tilde C \delta^* } \le \min \{ \sigma, \delta,\varepsilon\},
	\end{align*}
	where we have chosen $\delta^*$ such that $\delta^* \le C h\left (\min \{ \sigma, \delta,\varepsilon\} \right )^2/ (\tilde C\sqrt{N}).$
	Since $E_h^1$ is a $\Lambda-$minimizer and $E$ is regular, up to taking $\delta^*$ smaller, the classical regularity theory for $\Lambda-$minimizers (see Theorem \ref{teorema convergenza C^1}) implies
	\[\ud_H(\bd E_h^1, \bd E+x_1) \le \delta, \]
	where $x_1$ is such that $\alpha(E_h^1,E)=|E_h^1 \triangle (E+x_1)|$.
	
	Now we iterate the procedure: by induction, suppose that 
	\begin{equation}
		\alpha(E_h^{n-1},E) \le \min \{ \sigma, \delta,\varepsilon\}, \qquad \ud_H(\bd E_h^{n-1}, \bd E+x_{n-1}) \le \delta
		\label{inequalities above}
	\end{equation}
	where $x_{n-1}$ is such that $|E_h^{n-1} \triangle (E +x_{n-1})|=\alpha(E_h^{n-1},E)$. Observe that the second inequality in \eqref{inequalities above}  implies that $\ud_H(\overline{E}_h^{n-1},\overline{E}+x_{n-1})\le \delta$, therefore $E_h^{n-1}$ and $E+x_{n-1}$ satisfy the hypotheses of Lemma \ref{L^inf estimate} and thus
	\[|E_h^n \triangle (E+x_{n-1})| \le \sigma.\]
	Observe that by definition $ \alpha(E_h^n,E+x_{n-1})=\alpha(E_h^n,E)$. Now, by Theorem \ref{coroll 1.2} and the monotonicity of the perimeters along the discrete flow we obtain 
	\begin{align*}
		\alpha(E_h^n,E)&\leq \dfrac 1{\sqrt C}\sqrt{P(E_h^n)-P(E)}\\
		&\leq \dfrac 1{\sqrt C}\sqrt{P(E_h^1)-P(E)} \\
		&\leq \dfrac 1{\sqrt C}\sqrt{\dfrac{\sqrt N}h  \tilde C\delta^* } \le \min \{ \sigma,\delta,\varepsilon\}.
	\end{align*}
	Again, thanks to the choice of $\delta^*$, the hypotheses of Theorem \ref{teorema convergenza C^1} are satisfied and thus 
	\[ \ud_H(\bd E_h^n,\bd E+x_n)\le \delta, \]
	where $x_n$ is such that $\alpha(E_h^n,E)=|E_h^n\triangle (E+x_n)|$. This concludes the proof.
\end{proof}

\subsection{Some remarks on the hypothesis of the $L^1-$estimate}
\label{sec_contr}

In this subsection we show that Proposition \ref{uniform L1 estimate} does not hold if we weaken the hypothesis of closeness in the Hausdorff distance between the starting set $E_0$ and the strictly stable set $E$. In particular, we prove that the sole hypothesis of closeness in $L^1$ and in perimeter is not enough. We remark that a  modification of this example yields the same result in $\R^N$.

Fix $h>0$ and $G \subset \T^N$. Recall that, for any set $F\subset \T^N$ such that $|F|=|G|$, we have set
\begin{equation}\label{1998}
	J_h^G(F):=P(F)+\frac 1h\mathcal \int_{F \triangle G} \dist_{\bd G}(x) \ud x.
\end{equation}

\begin{prop}
	\label{first counterexample}
	There exist $m>0$ and a sequence $(E_n)_{n\in \N}\subset \T^N$ with the following properties: $|E_n|=m$ for every $n \in \N$, $P(E_n)$ is uniformly bounded and, letting $F_n$ be any volume-constrained minimizer of \eqref{1998} with $E_n$ instead of $G$, we have
	\[ E_n\overset{L^1}{\to} E,\quad P(E_n) \to P(E) \quad \text{but}\quad F_n\overset{L^1}{\to} F, \]
	where $E$ is a lamella  and $F$ is such that $|E\triangle F|>0.$
\end{prop}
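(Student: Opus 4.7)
The plan is to construct an explicit sequence $(E_n)_n \subset \T^2$ that converges to a horizontal lamella in $L^1$ and in perimeter, but whose associated one-step minimizers $F_n$ converge to a set $F$ with $|E \triangle F| > 0$. I will take $m = 1/2$ and $E = \{0 \leq x_2 \leq 1/2\}$ (so $P(E) = 2$), and define $E_n$ as a \emph{small, Hausdorff-nontrivial} perturbation of $E$: set $E_n := (E \setminus S_n) \cup T_n$, where $T_n \subset \T^2 \setminus E$ is a "satellite" placed at a fixed positive distance from $\bd E$ (say near $x_2 = 3/4$), and $S_n \subset E$ is a thin compensating slice chosen so that $|S_n| = |T_n|$. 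Taking $T_n$ to be a small rectangle of width $w_n \to 0$ (and of comparable height), one immediately verifies $|E_n| = m$, $|E_n \triangle E| = 2|T_n| \to 0$ and $P(E_n) = P(E) + P(T_n) + o(1) \to P(E)$. However, since $T_n$ sits bounded away from $\bd E$ we have $\ud_H(E_n, E) \geq 1/4$, realizing exactly a case in which $L^1+$perimeter convergence takes place without Hausdorff convergence.

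To analyze the minimizers $F_n$ of $J_h^{E_n}$ under the volume constraint $|F| = |E_n|$, I will first invoke Remark~\ref{oss limitatezza perimetro} to get a uniform perimeter bound, then use compactness of sets of finite perimeter to extract an $L^1$-convergent subsequence $F_n \to F$. Testing $J_h^{E_n}(F_n)$ against natural competitors (such as $E$ itself) and passing to the limit via lower-semicontinuity of the perimeter will give $P(F) \leq P(E)$, so $F$ is a perimeter minimizer at volume $m$ in $\T^2$, i.e.\ a (possibly translated or reoriented) lamella. The key observation driving the argument is that $\sd_{E_n}$ is markedly different from $\sd_E$ inside and near $T_n$ (it is of order $w_n$ rather than $1/4$), so the distance term in $J_h^{E_n}$ is cheap for configurations that exploit this altered landscape; by an appropriate choice of the geometry of $T_n$ one can ensure that $F$ acquires a nontrivial piece distinct from $E$ in the limit, yielding $|E \triangle F| > 0$.

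The main obstacle is precisely to guarantee that $F_n$ does not collapse back to $E$ in $L^1$, despite the fact that the competitor $E$ already has a vanishingly small distance term (of order $|E \triangle E_n|$). The construction must therefore be tuned so that the gain in the distance term for the target limit $F$ (exploiting $T_n$) strictly dominates the perimeter overhead of any new component of $F_n$, uniformly in $n$. This requires a delicate balance between the shape and scale of $T_n$, its fixed location relative to $\bd E$, and the given time-step $h>0$; the verification combines the regularity and uniform $\Lambda$-minimality from Proposition~\ref{proprieta flusso} with an explicit comparison of $J_h^{E_n}$ on carefully chosen test sets. Once this is achieved, the conclusion $|E \triangle F| > 0$ follows directly, and the proposition is proved.
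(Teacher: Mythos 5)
Your construction does not work, and the gap is structural rather than a matter of tuning. There are two independent fatal problems.

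First, the choice $m=1/2$ is exactly the wrong regime. At $m=1/2$ in $\T^2$, the lamella $E$ \emph{is} the global volume-constrained perimeter minimizer (a disc of area $1/2$ has perimeter $\sqrt{2\pi}>2=P(E)$). Consequently, \emph{any} lower-semicontinuity argument of the type you sketch (``$P(F)\le P(E)$, so $F$ is a perimeter minimizer'') can only put the limit $F$ back into the class of lamellae of total area $1/2$, and gives you no mechanism to force $|E\triangle F|>0$. For the argument of the proposition to have any chance, $m$ must be chosen small enough that a disc beats the lamella; this is precisely what the paper does, and it is the whole reason the $\Gamma$-limit minimizer ends up far from $E$.

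Second, and independently, your satellite $T_n$ is a \emph{shrinking, localized} perturbation and therefore cannot change the limiting energy landscape. Indeed, $\dist_{\bd E_n}$ differs from $\dist_{\bd E}$ only on the set $\{x:\dist(x,T_n)<\dist(x,\bd E)\}$, a region of measure $O(1)$ but where the difference is at most $O(1)$ and integrated against a competitor it contributes at most $O(\text{diam}(\T^2))\cdot |F\triangle E_n|$, which for a competitor genuinely different from $E$ is not small. Conversely the ``cheap'' part of the landscape where $\dist_{\bd E_n}$ is actually close to $0$ is a set of measure $O(w_n)\to 0$, so no macroscopic mass can be placed there. The upshot is that $J_h^{E_n}\to J_h^E$ in the $\Gamma$-sense (not to $P$), and minimizers of $J_h^{E_n}$ converge to a minimizer of $J_h^E$. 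By Proposition~\ref{prop 3.1 rivisitata} applied to the strictly stable lamella $E$ (for $h$ small, and by the local $L^1$-minimality of $E$ for general $h$), this gives $F_n\to E$, the opposite of what you want. The obstacle you flag in your last paragraph --- ``guarantee that $F_n$ does not collapse back to $E$'' --- is therefore not a matter of delicate tuning; with a single shrinking satellite it cannot be overcome.

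The paper avoids both problems by doing something qualitatively different: it picks $m$ small enough that a disc has strictly smaller perimeter than the lamella of the same volume, and it builds $E_n$ so that $\bd E_n$ becomes \emph{dense} in $\T^N$ (a dyadic grid of tiny balls both inside and outside $E$ at scale $2^{-n}$, with radii chosen so that the added and removed volumes cancel and the added perimeter vanishes). The crucial consequence is that $\dist_{\bd E_n}\to 0$ \emph{uniformly on all of $\T^N$}, which makes the dissipation term vanish for \emph{every} competitor; hence $J_h^{E_n}$ $\Gamma$-converges to the perimeter functional $P$, and equicoercivity forces $F_n$ to converge to a \emph{global} perimeter minimizer, which by the choice of $m$ is a disc rather than the lamella. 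That is why $|E\triangle F|>0$. If you want to salvage your write-up, replace the single satellite by a whole dyadic family that densely fills the torus, and crucially take $m$ small rather than $m=1/2$.
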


\begin{proof} 
	Let $m>0$ such that the ball  of volume $m$ has perimeter strictly less than the one of the lamella of the same volume; we remark that for every smaller volume $m'\le m$ the same property holds.
	Let $E$ be a lamella of measure $m$, recall that $E$ is a strictly stable set of the perimeter in $\T^N$. 
	From the assumption on $m$ it follows that $E$ is only a local minimizer of the perimeter and not a global one.\\
	\textbf{Step 1.} Firstly, we construct a sequence $(E_n)_{n \in \N}$ such that $E_n \to E$ in $L^1$ and $\bd E_n \to \T^N$ in the Hausdorff distance.
	We define $E_n$ by adding to $E$ some balls contained in $\mathbb T^N\setminus E$ and of overall small volume, and by subtracting to $E$ balls contained in $E$ with the same overall volume. 
	
	Recall that $\T^N=[0,1]^N/\Z^N$.
	In the following, with a little abuse of notation, we will identify $\T^N$ and $[0,1)^N$.
	We define
	\begin{align*}
		I_n:&=\left \{\underline k =(k_1,\ldots, k_N) \in \Z^N: \, 0\le k_i \le 2^{n}-1 \quad \forall i =1,\ldots N\right\},\\
		\mathcal{P}_n:&=\left \{Q_{n,\underline k}:=\Big[0,\frac{1}{2^n}\Big)^N+\frac{\underline k}{2^n}:\, \underline k \in I_n   \right \},
	\end{align*} 
	for every $n \in \N$. Up to choosing $m$ smaller, we can assume that $m=1/2^s$ for some $s \in \N$. Moreover, we can suppose, up to translations, that $E=[0,1)^{N-1} \times (0,1/2^s)$, thus for $n \ge s$ we have
	\[E =\textnormal{Int}\left ( \bigcup_{ \underline k  \in I_n, \,\,0\le k_N \le 2^{n-s}-1} Q_{n,\underline k}\right), \]
	where $\textnormal{Int}(\cdot)$ denotes the interior of a set in $\T^N$.
	For every $n\ge s$ and 
	$\underline k  \in I_n$, we consider the balls $B_{n,\underline k} \subset Q_{n,\underline k}$  centered in the center of the cube $Q_{n,\underline k}$ and of radius $r_{n,\underline k}$ chosen in such a way that
	\begin{equation}
		\label{misura_uguale}
		\left\lvert\bigcup_{\underline k   \in I_n,\, \,0\le k_N \le 2^{n-s}-1} B_{n,\underline k}\right\rvert= \left\lvert \bigcup_{\underline k   \in I_n, \,\,2^{n-s}\le k_N \le 2^{n}-1} B_{n,\underline k} \right\rvert.
	\end{equation}
	Moreover, we can also take the radii $r_{n,\underline k}$ sufficiently small so that
	\begin{equation}\lim_{n \to \infty}\left\lvert\bigcup_{\underline k  \in I_n} B_{n,\underline k}\right\rvert = 0, \qquad \lim_{n \to \infty}P   \left(\bigcup_{\underline k   \in I_n } B_{n,\underline k}\right)=0.
		\label{misura_va_zero}
	\end{equation}
	Set now
	\begin{align*} 
		A_n&:=\bigcup_{\underline k   \in I_n ,\,\,0\le k_N \le 2^{n-s}-1} B_{n,\underline k}\subset \textnormal{Int}\left (\bigcup_{\underline k   \in I_n ,\,\,0\le k_N \le 2^{n-s}-1} Q_{n,\underline k}\right)= E,\\ C_n&:=\bigcup_{\underline k   \in I_n ,\,\,2^{n-s}\le k_N \le 2^{n}-1} B_{n,\underline k}\subset \bigcup_{\underline k   \in I_n ,\,\,2^{n-s}\le k_N \le 2^{n}-1} Q_{n,\underline k}\subset \T^N\setminus E.
	\end{align*}
	Define $E_n=(E\cup C_n)\setminus A_n$ and observe that, by \eqref{misura_uguale}, we have $|E_n|=|E|$. Now, by \eqref{misura_va_zero}, we also obtain 
	\[ E_n\overset{L^1}{\to} E \quad \textnormal{ and } \quad P(E_n) \to P(E).\]
	Observe that, from the definition of $A_n$ and $C_n$, we have that
	\[(\bd A_n)_{\sqrt{N}/2^n} \cup (\bd C_n)_{\sqrt{N}/2^n}=\T^N\]
	and therefore the set $\bd E_n= \bd E \cup \bd C_n \cup \bd A_n$ converges in the Hausdorff metric to the whole $\T^N$  as $n\to +\infty$. Therefore we have constructed a sequence $(E_n)_{n \in \N}$ that satisfies
	\begin{equation}
		E_n\overset{L^1}{\to} E,\qquad  P(E_n) \to P(E), \qquad \bd E_n\overset{H}{\to} \T^N.
		\label{insiemi controesempio}
	\end{equation}
	
	\textbf{Step 2.} Let $E_n$ be the sets previously defined.
	We consider the space $X=\{  F\subset \T^N\,:\, F \text{ is measurable} \}$ endowed with the $L^1-$distance, i.e. $\dist_{L^1}(F,G)=|F\triangle G|$ for every $F,G\in X$. We extend our functional in the following way 
	\begin{equation*}
		\tilde J^E_h(F):=\begin{cases} J_h^E(F) &\text{if } P(F)<\infty,\ |F|=m,\\[1ex]
			+\infty  &\text{otherwise}\end{cases}
	\end{equation*}
	and we set $J_n:=\tilde J_h^{E_n}$. We then prove the $\Gamma-$convergence of the functionals $J_n$ to the perimeter functional in $X$, that is
	\begin{equation}
		\Gamma(X)-\lim_{n \to \infty}J_n=P.
		\label{gamma-convergence}
	\end{equation}
	
	We can clearly restrict ourselves to consider sets of finite perimeter and volume $m$, otherwise the result is trivial. For any given set $F$ of measure $m$ and finite perimeter we choose the sequence constantly equal to $F$ as a recovery sequence for $F$. Indeed, by~\eqref{insiemi controesempio} we have
	\[ J_n(F)=P(F)+\frac 1h \int_{F\triangle E_n} \dist_{\bd E_n}\to P(F).\]
	We now prove the $\liminf$ inequality. Given a sequence $F_n$ that converges to $F$ in $L^1$, by the $L^1-$semicontinuity of the perimeter, we have
	\[ P(F)\le \liminf_{n\to \infty} P(F_n)\le \liminf_{n\to \infty} \left(    P(F_n)+\frac 1h \int_{F_n\triangle E_n}\dist_{\bd E_n}     \right)\]
	and thus \eqref{gamma-convergence} is proved.
	Therefore, thanks to the equi-coercivity of the functionals $J_n$, any sequence of volume-constrained global minimizers of $J_n$ converges in $L^1$, up to a subsequence, to a volume-constrained global minimizer of the perimeter in the torus. Let $(F_n)_{n \in \N}$ be a sequence of global minimizers of the functional $J_n$ and let $F$ be such that $F_n\to F$ in $L^1$. We know that $F$ is a global minimizer of the perimeter and that by the choice of $m$ the lamella is not a global minimizer. Therefore it must hold $|E\triangle F|>0$.
\end{proof}


\section{Convergence of the flow}
\label{sezione convergenza}
In this section, we will prove the main result of the paper concerning the convergence of the discrete flow that mainly relies on Proposition \ref{uniform L1 estimate}.


\subsection{Convergence of the flow up to translations}

We start by recalling \cite[Lemma~3.6]{MPS}: it will be used in the proof of the following proposition. 

\begin{lemma}
	Let $(E_h^n)_{n \in \N}$ be a volume preserving discrete flow starting from $E_0$ and let $E_h^{k_n}$ be a subsequence such that $E^{k_n}_h+\tau_n\to F$ in $L^1(\T^N)$ for some set $F$ and a suitable sequence $(\tau_n)_{n \in\N}\subset\T^N.$ Then $\dist_{\bd E^{k_n-1}_h}(\cdot+\tau_n)\to \dist_{\bd F}$ uniformly. 
	\label{lemma 3.5n}
\end{lemma}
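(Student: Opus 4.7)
My plan is to reduce the desired uniform convergence of distance functions on the compact torus $\T^N$ to the Hausdorff convergence of the underlying boundaries. Once translated versions of $E_h^{k_n-1}$ are shown to converge in $L^1$ to $F$, the uniform $\Lambda$-minimality of the discrete flow (Proposition \ref{proprieta flusso}) will upgrade this to the required convergence of boundaries and hence of distance functions.

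The main quantitative ingredient is a bound on the gap between two consecutive iterates. Testing the minimality of $E_h^{k_n}$ against the volume-preserving competitor $E_h^{k_n-1}$ gives
\[
\tfrac{1}{h}\int_{E_h^{k_n}\triangle E_h^{k_n-1}} \dist_{\bd E_h^{k_n-1}}(x)\,\ud x \le P(E_h^{k_n-1})-P(E_h^{k_n}).
\]
Since $(P(E_h^n))_n$ is non-increasing and non-negative, the right-hand side tends to zero along the (increasing) subsequence $k_n$. To convert this weighted integral estimate into the plain measure estimate $|E_h^{k_n}\triangle E_h^{k_n-1}|\to 0$, I would split the symmetric difference according to whether $\dist_{\bd E_h^{k_n-1}}$ exceeds a threshold $\varepsilon$: the part at distance at least $\varepsilon$ has measure at most $\varepsilon^{-1}\cdot o(1)$ by Chebyshev, while the part within $\varepsilon$ of $\bd E_h^{k_n-1}$ is contained in a tubular neighborhood of Lebesgue measure at most $C\varepsilon$, uniformly in $n$, thanks to the uniform perimeter bound and density estimates of Proposition \ref{proprieta flusso}. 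Sending $n\to\infty$ then $\varepsilon\to 0$ yields the claim, and by the triangle inequality one concludes that $E_h^{k_n-1}+\tau_n \to F$ in $L^1$.

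Finally, since the translates $E_h^{k_n-1}+\tau_n$ are uniform $\Lambda$-minimizers and the lower density estimates of Proposition \ref{proprieta flusso}(i) pass to the $L^1$ limit, this convergence upgrades to Hausdorff convergence of the boundaries by a standard argument in the theory of $\Lambda$-minimizers (the same principle behind Theorem \ref{teorema convergenza C^1}): any point of $\bd F$ is approached by points of $\bd E_h^{k_n-1}+\tau_n$ (otherwise $L^1$ convergence would fail in a small ball around it), and conversely any accumulation point of $\bd E_h^{k_n-1}+\tau_n$ lies in $\bd F$ by the uniform density estimates. Since Hausdorff convergence of closed subsets of $\T^N$ is equivalent to uniform convergence of their distance functions, we obtain precisely the stated conclusion (modulo unpacking the translations).

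The principal obstacle is the integral-to-measure step: the weight $\dist_{\bd E_h^{k_n-1}}$ degenerates on the boundary itself, so vanishing of the weighted integral is insufficient in isolation; the uniform control of $\mathcal H^{N-1}(\bd E_h^{k_n-1})$ provided by Proposition \ref{proprieta flusso} is essential to bound thin tubular neighborhoods of $\bd E_h^{k_n-1}$ uniformly in $n$ and close the argument.
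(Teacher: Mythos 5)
Your proposal is correct and is essentially the argument one must give; note that the paper itself does not reproduce a proof of this lemma but simply recalls it as \cite[Lemma~3.6]{MPS}, remarking that the original proof adapts to the periodic setting, and your write-up is precisely such an adaptation. The structure — vanishing of the dissipation $\mathcal{D}(E_h^{k_n},E_h^{k_n-1})$ from monotonicity of the perimeters, the Chebyshev/tubular-neighborhood split to promote this to $|E_h^{k_n}\triangle E_h^{k_n-1}|\to 0$ (where the uniform density estimates and perimeter bound of Proposition~\ref{prop regularity discrete flow} give $|(\bd E_h^{k_n-1})_\varepsilon|\le C\varepsilon$ uniformly in $n$), the triangle inequality to transfer $L^1$ convergence to the previous iterate, and finally the upgrade from $L^1$ convergence of uniform $\Lambda$-minimizers to Hausdorff convergence of boundaries, hence uniform convergence of distance functions on the compact torus — is exactly what is needed, and you have correctly identified both the main quantitative ingredient and the principal obstacle (the degeneracy of the weight along $\bd E_h^{k_n-1}$).
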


In the following proposition we characterize the long-time behaviour up to translations of the discrete mean curvature flow in the flat torus starting near a regular strictly stable set.

\begin{prop}
	Let $E \subset \T^N$ be a strictly stable set. Then there exist $\delta^*=\delta^*(E)>0$ and $h^*=h^*(E)>0$ with the following property: if $h< h^*$ and $E_0\subset \T^N$ is a set of finite perimeter satisfying
	\[|E_0| =|E|, \qquad \overline E_0\subset (E)_{\delta^*},\]
	then, for every discrete flow $(E_h^n)_{n\in\N}$ starting from $E_0$, there exists a sequence of translations $\tau_n\in \T^N$ such that
	\[ E_h^n+\tau_n \to E  \quad \text{in} \quad  C^k, \quad \forall k \in \N.\]
	\label{teorema convergenza a meno di traslazioni}
\end{prop}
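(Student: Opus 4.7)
My plan mirrors the sketch in Section 1.1: use Proposition \ref{uniform L1 estimate} to keep the flow $L^1$-close to translates of $E$, identify subsequential limits (after optimal translation) as critical sets of the perimeter via the Euler--Lagrange equation and Lemma \ref{lemma 3.5n}, invoke strict stability to force these limits to coincide with $E$, and finally upgrade convergence from $L^1$ to $C^k$ via the regularity theory for $\Lambda$-minimizers.

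Fix $\varepsilon>0$ small and apply Proposition \ref{uniform L1 estimate} to obtain $\delta^*,h^*$ so that $\alpha(E,E_h^n)\le\varepsilon$ for all $n$; for each $n$ pick $\tau_n\in\T^N$ realizing this infimum and set $\tilde E_n:=E_h^n+\tau_n$, so that $|E\triangle\tilde E_n|=\alpha(E,E_h^n)$. By the minimality characterization of $E_h^{n+1}$, $P(E_h^n)\searrow P_\infty$ and the dissipation series $\sum_n \int_{E_h^{n+1}\triangle E_h^n}\dist_{\bd E_h^n}\ud x$ is finite. Given any subsequence $(\tilde E_{k_n})$, up to a further extraction, BV compactness yields $\tilde E_{k_n}\to F$ in $L^1$ for some set of finite perimeter $F$ with $|F|=|E|$ and $|F\triangle E|\le\varepsilon$. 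Lemma \ref{lemma 3.5n} then gives $\dist_{\bd E_h^{k_n-1}}(\cdot+\tau_{k_n})\to \dist_{\bd F}$ uniformly, which, coupled with BV compactness and the volume constraint, forces $\tilde E_{k_n-1}\to F$ in $L^1$ as well. Since each $E_h^{k_n}$ is a $\Lambda$-minimizer (Proposition \ref{prop regularity discrete flow}(ii)), Theorem \ref{teorema convergenza C^1} upgrades $\tilde E_{k_n}\to F$ to convergence of the boundaries in $C^{1,\beta}$ for every $\beta\in(0,1/2)$. The translated Euler--Lagrange equation \eqref{euler-lagrange equation} for $\tilde E_{k_n}$, whose Lagrange multipliers $\lambda_{k_n}$ are uniformly bounded, can then be passed to the limit: the forcing term $\sd_{\tilde E_{k_n-1}}/h\to \sd_F/h$ uniformly and $\sd_F$ vanishes on $\bd F$, so the limit equation reduces to $\int_{\bd F}\div_\tau X\udH=\lambda\int_{\bd F}X\cdot\nu_F\udH$, i.e.\ $F$ is a critical set of the perimeter.

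A standard consequence of the strict stability of $E$ (via the implicit function theorem applied to the constant mean curvature operator near $E$, or equivalently via a Morse-type argument combined with Theorem \ref{teo alex}) is that volume-preserving critical sets in a small $C^{1,\beta}$-neighborhood of $E$ are exactly translates of $E$; thus $F=E+\sigma$ for some $\sigma$. Combined with the optimality of $\tau_{k_n}$ and the $L^1$-continuity of $\alpha(E,\cdot)$, one has $|E\triangle F|=\lim_n |E\triangle\tilde E_{k_n}|=\lim_n\alpha(E,E_h^{k_n})=\alpha(E,F)=0$, since $F$ is a translate of $E$; hence $F=E$ (up to null sets). As every subsequence admits such a further subsequence, the whole sequence $\tilde E_n$ converges to $E$ in $L^1$, whence in $C^{1,\beta}$ by Theorem \ref{teorema convergenza C^1}, and Schauder bootstrap applied to the Euler--Lagrange relation $H_{\tilde E_n}=-\sd_{\tilde E_{n-1}}/h+\lambda_n$ then yields $C^k$ convergence for every $k\in\N$. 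The main technical hurdle is coordinating the translations so that both $\tilde E_{k_n}$ and $\tilde E_{k_n-1}$ converge to the same limit (which is where Lemma \ref{lemma 3.5n} enters crucially) and verifying that the distance-forcing term vanishes when the Euler--Lagrange equation is passed to the limit; the uniqueness-up-to-translation of nearby critical points, relying essentially on strict stability, is the other delicate point.
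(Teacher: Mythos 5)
Your outline follows the paper's strategy closely: uniform $L^1$ control via Proposition~\ref{uniform L1 estimate}, subsequential limits identified as critical sets through the Euler--Lagrange equation and Lemma~\ref{lemma 3.5n}, strict stability to conclude the limit is a translate of $E$, and a $\Lambda$-minimizer/Schauder bootstrap for $C^k$-convergence. Your choice of $\tau_n$ as the optimal translation and the observation that $\alpha(E,F)=\lim_n\alpha(E,E_h^{k_n})=|E\triangle F|=0$ once $F$ is a translate is a clean way to pin down $F=E$; the paper is slightly less explicit about this.

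The step you compress, however, is precisely where the paper spends its second (and most technical) step. After establishing $C^{1,\beta}$-closeness of the limit to $E$, the paper does \emph{not} immediately invoke Theorem~\ref{teo alex}: it first proves that the curvatures converge and that the limit is $C^{2,\beta}$-close to $E$ (Step~2 of its proof, involving the divergence-form identity \eqref{align step 3} and \cite[Lemma~7.2]{AFM}). This is needed because the translation in Lemma~\ref{lemma 3.8_AFM}, which puts $\varphi_\infty$ into the form required by \eqref{ipotesi alex}, is stated for $\psi\in C^2$ with small $W^{2,p}$-norm, $p>N-1$ --- $C^{1,\beta}$-smallness alone is not a hypothesis of that lemma. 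You acknowledge the issue by pointing to an implicit-function-theorem alternative, but you do not carry it out, and the IFT route has its own subtleties (the linearization of the constant-mean-curvature operator is degenerate along $T(\bd E)$; one must quotient by translations and show the remaining operator is invertible, which is essentially a restatement of strict stability that again needs an $H^2$-level estimate). So the gap is fillable, but it is a genuine technical step that the paper proves and you treat as a black box; everything else in your argument matches the paper's route.
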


\begin{proof} 	
	Let $\varepsilon>0$ be sufficiently small and let $\delta^*=\delta^*(\varepsilon,E)$, $h^*=h^*(E)$ be the constants given by Proposition \ref{uniform L1 estimate}.
	Fix $E_0$ an initial set satisfying $|E|=|E_0|$ and $\overline E_0\subset (E)_{\delta^*}$. It is enough to show that any (unrelabelled) subsequence of the discrete flow starting from $E_0$ admits a further subsequence converging in $C^k$ and up to translations to $E$. 
	We divide the proof into three steps.\\
	\textbf{Step 1.} (Existence and regularity of a limit point) From Proposition \ref{prop regularity discrete flow} we remark that, for $n \ge 1$, the sets $E_h^n$ are uniform $\Lambda-$minimizers with uniformly bounded, non-increasing perimeters. Therefore, by the compactness of (uniform) $\Lambda-$minimizers, we can conclude that there exists a subsequence $(E_h^{k_n})_{n \in \N}$ and a $\Lambda-$minimizer $E_h^{\infty}$ such that
	\[ E_h^{k_n}\overset{L^1}{\to} E_h^{\infty},\quad  P(E_h^{k_n}) \to P(E_h^{\infty}), \quad  \sd_{ E_h^{k_n-1}}\to \sd_{ E_h^{\infty}}\ \text{uniformly}. \]
	Let $G$ be a set of finite perimeter such that $|G|=m$. By the minimality of $E_h^{k_n}$ we have
	\[ P(E_h^{k_n})+\dfrac 1h \int_{E_h^{k_n}} \sd_{ E_h^{k_n-1}}(x) \ud x\leq P(G)+\dfrac 1h \int_G \sd_{ E_h^{k_n-1}}(x) \ud x  \]
	and, taking the limit as $n\to \infty$, we obtain
	\[ P(E_h^{\infty})+\dfrac 1h\int_{E_h^{\infty}} \sd_{ E_h^{\infty}}(x) \ud x\leq P(G)+\dfrac 1h \int_G \sd_{ E_h^{\infty}}(x) \ud x.  \]
	We have thus proved that $E_h^{\infty}$ is a fixed point for the discrete flow and thus, by Proposition \ref{prop 3.1 rivisitata}, it is a critical point for the perimeter.  
	
	Let $\tau_\infty\in\text{argmin}_x|(E_h^\infty+x)\triangle E|$. 
	By Proposition \ref{uniform L1 estimate} we have $\alpha(E,E_h^{k_n}) \le \varepsilon$ for every $n \in \N$. Now, up to taking $\varepsilon$ smaller, Theorem \ref{teorema convergenza C^1} and  the smoothness of $E$, yields  both the $C^{1,\beta}$-closeness between $E_{h}^{\infty}+\tau_\infty$ and $E$, and the $C^{1,\beta}$ regularity of $E_h^\infty+\tau_\infty$ (and thus of $E_h^\infty$), for every $\beta\in (0,1)$. From Proposition \ref{prop regularity discrete flow} \textit{(iv)}  it follows that  $E_h^\infty$  is of class $C^{2,\beta}$, therefore we conclude that $E_h^\infty$ has constant classical mean curvature and thus it is of class $C^\infty$. To conclude, the smoothness of $E^\infty_h$ allow us to use Theorem \ref{teorema convergenza C^1} to improve the convergence of the subsequence to
	\begin{equation}
		\label{eq step 2}E_h^{k_n}\to E^\infty_h\quad \text{in}\quad C^{1,\beta}
	\end{equation}
	and to ensure that the sets $E^{k_n}_h$ are of class $C^{1,\beta}$ for $n$ large enough.\\
	\textbf{Step 2.} (Convergence in $C^{2,\beta}$ of the flow and $C^{2,\beta}-$closeness to $E$) In this step we we will prove that $E^\infty_h$ is $C^{2,\beta}-$close to $E$ and that the convergence of $E^{k_n}_h$ to $E^{\infty}_h$ is in $C^{2,\beta}$. Without loss of generality, we assume that  $\alpha(E,E^\infty_h)=|E\triangle E^\infty_h|$ so that the translation introduced by the previous step does not appear.

	First of all we remark that, owing to the compactness of $\bd E^\infty_h$, it suffices to show that the result holds locally. By a compactness argument and the definition of convergence of sets in $C^{1,\beta}$ (Definition \ref{definizione convergenza C^1}), up to rotations and relabelling the coordinates, we can find a cylinder $C=B'\times (-L,L)$, where $B'\subset \R^{N-1}$ is a ball centred at the origin, and functions $f_\infty,f_n\in C^{1,\beta}(B';(-L,L))$ 	describing locally $\bd E^\infty_h\cap C$ and $\bd E^{k_n}_h\cap C$ respectively. We remark that the convergence \eqref{eq step 2} now reads as 
	\begin{equation}
		f_{k_n}\to f_\infty \quad \text{in} \quad C^{1,\beta}(B').
		\label{convergenza grafici}
	\end{equation}
	
	We now prove that the curvatures $H_{E^{k_n}_h}$ of the sequence $E^{k_n}_h$ are converging in $C^{0,\beta}$ to the curvature of $E^\infty_h$ in the following sense
	\begin{equation}
		\label{goal step 2}
		H_{E^{k_n}_h}(\cdot,f_{k_n}(\cdot))\to H_{E^{\infty}_h}(\cdot,f_{\infty}(\cdot))\quad \text{in} \quad C^{0,\beta}(B').  
	\end{equation}
	We will follow an argument used in Step~3 of the proof of \cite[Theorem 4.3]{AFM}.
	
	Since we described $\bd E^{k_n}_h\cap C$ as a graph, the following formula for the curvature of $\bd E_h^{k_n}$ holds	
	\begin{equation}
		\label{formula curvatura}
		\div\left(  \dfrac{\nabla f_{k_n}(\cdot) }{\sqrt{1+|\nabla f_{k_n}(\cdot)|^2}} \right)=H_{E^{k_n}_h}(\cdot,f_{k_n}(\cdot))\quad \text{on} \quad B'
	\end{equation}
	and an analogous formula holds for $ \bd E^\infty_h$. From \eqref{formula curvatura} and the Euler-Lagrange equation \eqref{euler-lagrange equation}, by integrating on $B'$, we then obtain
	\begin{align}
		\label{align step 3}
		\lambda_{k_n}\mathcal H^{N-1}(B')&-\dfrac1h\int_{B'} sd_{E^{k_n-1}_h}(x',f_{k_n}(x'))\udH(x')\\[1.2ex]
		&=\int_{B'} H_{E^{k_n}_h}(x',f_{k_n}(x'))\udH(x')\nonumber \\[1.2ex]
		&=\int_{B'} \div\left(  \dfrac{\nabla f_{k_n}(x')}{\sqrt{1+|\nabla f_{k_n}(x')|^2}}   \right)\udH(x')\nonumber\\[1.2ex]
		&=\int_{\bd B'}  \dfrac{\nabla f_{k_n}(y)}{\sqrt{1+|\nabla f_{k_n}(y)|^2}}  \cdot y\,\ud\mathcal H^{N-2}(y)\nonumber,
	\end{align}
	where we set $y=x'/|x'|$ and integrated by parts in the last line. We can then exploit the  convergence \eqref{convergenza grafici} and the formula \eqref{formula curvatura} for the curvature of $E^\infty_h$  to prove 
	\begin{align*}
		\int_{\bd B'}  \dfrac{\nabla f_{k_n}(y)}{\sqrt{1+|\nabla f_{k_n}(y)|^2}}  \cdot y\,\ud\mathcal H^{N-2}(y)    &\to           \int_{\bd B'}  \dfrac{\nabla f_\infty(y)}{\sqrt{1+|\nabla f_\infty|^2}(y)}  \cdot y\,\ud\mathcal H^{N-2}(y)\\[1.2ex]
		&=\int_{B'} \div\left(  \dfrac{\nabla f_\infty(x')}{\sqrt{1+|\nabla f_\infty(x')|^2}}   \right)\udH(x')\\[1.2ex]
		&=H_{E^\infty_h}\,\mathcal H^{N-1}(B').
	\end{align*}
	Now, Lemma \ref{lemma 3.5n} ensures that $sd_{E^{k_n-1}_h}\to sd_{E^\infty_h}$ uniformly and we can use the convergence \eqref{convergenza grafici} to obtain 
	\[sd_{E^{k_n-1}_h}((\cdot,f_{k_n}(\cdot)))  \to    sd_{E^{\infty}_h}((\cdot,f_{\infty}(\cdot)))=0 \quad \text{uniformly on }B',\]
	since $\bd E^\infty_h\cap C=\{(x',f_\infty(x')): x'\in B')\}$ by definition. Therefore we find
	\[\int_{B'} sd_{E^{k_n-1}_h}((x',f_{k_n}(x'))) \udH(x')\to \int_{B'} sd_{E^{\infty}_h}((x',f_{\infty}(x'))) \udH(x')=0. \]
	We then conclude that  \eqref{align step 3} converges to $H_{E^\infty_h}\mathcal H^{N-1}(B')$ and thus it must hold
	\[ \lambda_{k_n}\to H_{E^\infty_h}.  \]
	From \eqref{euler-lagrange equation}, the previous result and the fact that the signed distance functions are all equi-lipschitz, we conclude that for any $ \beta\in (0,1)$, the sequence $(H_{E^{k_n}_h}(\cdot,f_{k_n}(\cdot)))$ is bounded in $C^{0,\beta}(B')$ and thus it converges uniformly to $H_{E^\infty_h}(\cdot,f_{\infty}(\cdot))$. This proves the convergence \eqref{goal step 2}. 
	
	We remark that the previous result also hold if we describe the sets of the flow $E_h^{k_n}$ as normal deformations of $E^\infty_h$, that is there exist functions $\varphi_{k_n}: \bd E^\infty_h\to \R$ such that $ E_h^{k_n}=(E_h^{\infty})_{\varphi_{k_n}}$. In this case the convergence \eqref{eq step 2} reads as
	\[ \varphi_{k_n}\to 0\quad \text{in} \quad C^{1,\beta}(\bd E^\infty_h),  \]
	and this and Lemma \ref{lemma 3.5n} ensure that
	\[ sd_{E^{k_n-1}_h}(\cdot+\varphi_{k_n}(\cdot)\nu_{E^\infty_h}(\cdot))  \to    sd_{E^{\infty}_h}(\cdot)=0 \quad \text{uniformly on }\bd E^\infty_h.  \]
	Now, the convergence of the curvatures reads as 
	\[  H_{E^{k_n}_h}(\cdot +\varphi_{k_n}(\cdot)\nu_{E^\infty_h}(\cdot))\to H_{E^\infty_h}(\cdot)\quad \text{in} \quad C^{0,\beta}(\bd E^\infty_h). \]
	We can then apply directly \cite[Lemma 7.2]{AFM} to obtain that the subsequence $E^{k_n}_h$ is converging to $E^\infty_h$ in $C^{2,\beta}$.
	
	To prove the $C^{2,\beta}-$closeness of the limit point we argue by contradiction. Assume that a sequence of limit points $(E^{\infty,l}_{h})_{l\in\N}$ is converging in $C^{1,\beta}$ to $E$ but there exists $\sigma>0$ such that 
	\[ \dist_{C^{2,\beta}}(E,E^{\infty,l}_h)>\sigma  \]
	for every $l$ large enough. Again, we describe locally $\bd E^{\infty,l}_h$ and $\bd E$ as graphs of suitable functions $f_{\infty,l},f : B'\to (-L,L)$  and we can repeat the same argument previously employed to prove that 
	\[  H_{E^{\infty,l}_h}((\cdot,f_{\infty,l}(\cdot)))\to H_E((\cdot,f(\cdot)))\qquad \text{in }C^{0,\beta}(B').  \] 
	This time the argument is simpler, since the limit points are stationary sets for the perimeter and thus  their Euler-Lagrange equation  is
	\[ H_{E^{\infty,l}_h}=\lambda_{E^{\infty,l}_h}\in\R\quad \text{on} \quad \bd E^{\infty,l}_h.  \]
	Again, Lemma 7.2 in \cite{AFM} yields the desired contradiction.\\
	\textbf{Step 3.} (Uniqueness up to translations and $C^k$ convergence) By the previous step we can find a suitable function $\varphi_\infty\in C^{2,\beta}(\bd E)$ such that  $E^\infty_h=E_{\varphi_\infty}$. Up to introducing a further translation given by Lemma \ref{lemma 3.8_AFM}, the hypotheses of Theorem \ref{teo alex} are satisfied and thus 
	\[ \|\varphi_\infty\|_{H^1(\bd E)}\le C\|H_{E^\infty_h}-\overline H_{E^\infty_h}\|_{L^2(\bd E)}=0, \]
	since the set $E^\infty_h$ is a stationary set for the perimeter. Therefore $E^\infty_h$ is a translated of the set $E$.
	
	A standard bootstrap method based on the elliptic regularity theory combined with the Euler-Lagrange equation \eqref{euler-lagrange equation} yields the convergence in $C^k$ for every $k\in\N$.
\end{proof}


\subsection{Exponential convergence of the whole flow}
\label{sec convergence}
In this subsection we will prove that the translations introduced in Proposition \ref{teorema convergenza a meno di traslazioni} decay to zero exponentially fast. In order to prove this result we will estimate the decay of the dissipations via a dissipation-dissipation inequality, which in turn relies on the quantitative Alexandrov type estimate established in Theorem \ref{teo alex}. We start by recalling some preliminary results from \cite{MPS}.

The following lemma is an adaptation to our case of \cite[Lemma~3.8]{MPS}. Its proof can be found in the Appendix.
\begin{lemma}[\textit{A priori} estimates]
	\label{lemma a priori estimates}
	Let $\eta>0$ and let $E \subset \T^N$ be a strictly stable set. There exists $\delta>0$ with the following property: if $f_1, f_2\in C^1(\bd E)$ with $\|f_i\|_{C^1(\bd E)}\le \delta$ and $|E_{f_i}|=|E|$ for $i=1,2$ we have
	\begin{align}
		C_1(1-\eta)\|f_1-f_2\|_{L^2}^2\le &\mathcal D(E_{f_1},E_{f_2})\le C_1 (1+\eta)\|f_1-f_2\|_{L^2}^2\label{stima dissipazione norma 2}\\[2ex]
		\dfrac{1-\eta }2\int_{\bd \rosso{E_{f_1}}}\sd^2_{ E_{f_2}}\udH\le &\mathcal D(E_{f_1},E_{f_2})\le \dfrac{1+\eta }2\int_{\bd \rosso{E_{f_1}}}\sd^2_{ E_{f_2}}\udH\label{stima dissipazione distanza^2}\\[2ex]
		|\textnormal{bar}(E_{f_1})-\textnormal{bar}(E_{f_2})|^2\le& C_2\|f_1-f_2\|^2_{L^2} \le \frac{C_2}{C_1(1-\eta)}\mathcal D(E_{f_1},E_{f_2})
		\label{stima baricentri}
	\end{align}
	for suitable constants $C_1,\, C_2>0$.
\end{lemma}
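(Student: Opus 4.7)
The strategy is to exploit the fact that, for $\|f_i\|_{C^1(\bd E)} \le \delta$ small, the symmetric difference $E_{f_1}\triangle E_{f_2}$ lies in a fixed tubular neighborhood $\mathcal N$ of $\bd E$ where the map $\Psi(x,t)=x+t\,\nu_E(x)$ is a diffeomorphism with Jacobian $J\Psi(x,t)=\prod_{i=1}^{N-1}(1+\kappa_i(x)t)=1+O(\delta)$ (cf.~\eqref{quiquo}). In these normal coordinates, all three inequalities will reduce to straightforward integrals over $\bd E$, with the smallness of $\delta$ used to absorb all curvature error terms into the factor $(1\pm\eta)$.

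The plan for \eqref{stima dissipazione norma 2} is to use Fubini in the tubular chart to write
\[
\mathcal D(E_{f_1},E_{f_2})=\int_{\bd E}\int_{\min(f_1,f_2)(x)}^{\max(f_1,f_2)(x)}\dist_{\bd E_{f_2}}\!\bigl(\Psi(x,t)\bigr)\,J\Psi(x,t)\,\ud t\,\ud\mathcal H^{N-1}(x).
\]
The key point, which I expect to be the main technical obstacle, is to prove that $\dist_{\bd E_{f_2}}(\Psi(x,t))=|t-f_2(x)|(1+O(\delta))$ uniformly in $x$ and $t$ in the range of integration. This follows because, in the normal chart, $\bd E_{f_2}$ is the graph $\{t=f_2(x)\}$ and, since $\|f_2\|_{C^1}\le\delta$, the nearest-point projection onto $\bd E_{f_2}$ is close to the vertical projection (the angle between $\nu_E$ and $\nu_{E_{f_2}}$ is controlled by \eqref{prod scal vers normali}). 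Computing the inner integral explicitly gives $(f_1-f_2)^2/2$ exactly, and the combined $(1+O(\delta))$ factor can be made smaller than $\eta$ by shrinking $\delta$, yielding $C_1=1/2$.

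For \eqref{stima dissipazione distanza^2}, I would parametrize $\bd E_{f_1}$ by $\Phi_1(x)=x+f_1(x)\nu_E(x)$ with Jacobian $J\Phi_1$ given by \eqref{Jac_gen} (which equals $1+O(\delta)$ in $C^0$). The same signed-distance expansion $\sd_{E_{f_2}}(\Phi_1(x))=(f_1(x)-f_2(x))(1+O(\delta))$ then gives
\[
\int_{\bd E_{f_1}}\sd_{E_{f_2}}^2\,\udH=\|f_1-f_2\|_{L^2(\bd E)}^2(1+O(\delta)),
\]
which, combined with part \eqref{stima dissipazione norma 2}, produces the desired two-sided bound by $\mathcal D(E_{f_1},E_{f_2})$.

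Finally, for \eqref{stima baricentri}, I write
\[
\int_{E_{f_i}}x\,\ud x-\int_E x\,\ud x=\int_{\bd E}\int_0^{f_i(x)}\Psi(x,t)\,J\Psi(x,t)\,\ud t\,\ud\mathcal H^{N-1}(x),
\]
subtract for $i=1,2$, and use the volume constraint $|E_{f_1}|=|E_{f_2}|=|E|$ to subtract off the constant component (any fixed vector can be pulled out of the inner integral thanks to this volume equality). Estimating $\Psi(x,t)=x+O(\delta)$ and $J\Psi=1+O(\delta)$, the leading term becomes $\int_{\bd E}(f_1-f_2)(x)\,x\,\ud\mathcal H^{N-1}(x)$, and Cauchy--Schwarz gives $|\text{bar}(E_{f_1})-\text{bar}(E_{f_2})|^2\le C_2\|f_1-f_2\|_{L^2}^2$, with $C_2$ depending only on $E$. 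Chaining with \eqref{stima dissipazione norma 2} concludes \eqref{stima baricentri}.
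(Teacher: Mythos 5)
Your proposal is correct and follows essentially the same route as the paper's own proof: tubular (normal-ray) coordinates via $\Psi(x,t)=x+t\nu_E(x)$, the key estimate $\dist(p_t,\bd E_{f_2})\approx |t-f_2(x)|$ up to a $(1+O(\delta))$ factor (the paper formalizes this with the cone condition \eqref{cono}, you argue via the small angle between $\nu_E$ and $\nu_{E_{f_2}}$ from \eqref{prod scal vers normali} --- same content), the coarea/Fubini computation over the normal fibers, and the direct barycenter expansion closed by Cauchy--Schwarz. The only cosmetic differences are that you pin down $C_1=1/2$ (immaterial, since the lemma leaves $C_1$ unspecified) and that the paper spells out the cone estimate as its Step~1.
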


The following lemma  proves the crucial dissipation-dissipation inequality \eqref{dissipation-dissipation inequality} (see \cite[Lemma 3.9]{MPS}). This result will play a central role in the proof of Theorem \ref{teorema convergenza esponenziale}. Its proof is based on the Alexandrov-type estimate contained in Theorem \ref{teo alex}.
\begin{lemma}
	Let $h>0$ and let $E \subset \T^N$ be a strictly stable set. There exist constants $C,$ $\delta>0$  with the following property: for any pair of normal deformations $E_{f_1},$ $E_{f_2}$ with $f_i\in C^2(\bd E),$ $\|f_i\|_{C^1(\bd E)}\le \delta$, and such that $|E_{f_2}|=|E|,$ $|\int_{\bd E}\nu_E f_2\ud \mathcal H^{N-1}|\le \delta\|f_2\|_{L^2(\bd E)}$ and
	\begin{equation}
		H_{E_{f_2}}+\dfrac {\sd_{ E_{f_1}}}h=\lambda \quad \text{on}\quad \bd E_{f_2}
		\label{diss-diss euler lagrange}
	\end{equation}
	for some $\lambda\in\R$, we have
	\begin{equation}
		\mathcal D(E,E_{f_2})\le C\mathcal D(E_{f_2},E_{f_1}).
		\label{dissipation-dissipation inequality}
	\end{equation}
	\label{lemma 3.5}
\end{lemma}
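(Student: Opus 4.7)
The plan is to chain together the \emph{a priori} dissipation estimates of Lemma~\ref{lemma a priori estimates}, the Alexandrov-type estimate of Theorem~\ref{teo alex}, and the Euler--Lagrange equation \eqref{diss-diss euler lagrange}. More precisely, I would prove the chain of inequalities
\[
\mathcal D(E,E_{f_2})\,\lesssim\,\|f_2\|_{L^2(\bd E)}^2\,\lesssim\,\|H_{E_{f_2}}-\overline H_{E_{f_2}}\|_{L^2(\bd E)}^2\,\lesssim\,\tfrac{1}{h^2}\|\sd_{E_{f_1}}\|_{L^2(\bd E_{f_2})}^2\,\lesssim\,\mathcal D(E_{f_2},E_{f_1}),
\]
where each $\lesssim$ hides a constant that depends only on $E$ (and possibly $h$).

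First I would apply \eqref{stima dissipazione norma 2} with the trivial deformation $f\equiv 0$ (so that $E_0=E$) and with $f_2$ to obtain $\mathcal D(E,E_{f_2})\le C_1(1+\eta)\|f_2\|_{L^2(\bd E)}^2$. Next, to apply Theorem~\ref{teo alex} to $f_2$, I have to verify the two hypotheses in \eqref{ipotesi alex}: the second one is assumed; the first one, $|\int_{\bd E}f_2\ud\mathcal H^{N-1}|\le \delta\|f_2\|_{L^2(\bd E)}$, follows from $|E_{f_2}|=|E|$ together with the $C^1$-smallness $\|f_2\|_{L^\infty}\le \delta$ via Remark~\ref{oss} (estimate \eqref{conto media curvatura non nulla}). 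Theorem~\ref{teo alex} then yields
\[
\|f_2\|_{L^2(\bd E)}^2\le \|f_2\|_{H^1(\bd E)}^2\le C\|H_{E_{f_2}}-\overline H_{E_{f_2}}\|_{L^2(\bd E)}^2.
\]

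The third link requires changing the domain of integration from $\bd E$ to $\bd E_{f_2}$. Using the Jacobian formula \eqref{Jac_gen} and the bound $\|f_2\|_{C^1}\le \delta$, the Jacobian $J\Phi$ is uniformly close to $1$, so the $L^2$ norm on $\bd E$ of any function composed with $\Phi$ is comparable to the $L^2$ norm on $\bd E_{f_2}$ of that function, up to a multiplicative constant close to $1$. Then I use the Euler--Lagrange equation \eqref{diss-diss euler lagrange} to compute, on $\bd E_{f_2}$,
\[
H_{E_{f_2}}-\overline H_{E_{f_2}}=-\tfrac{1}{h}\bigl(\sd_{E_{f_1}}-\overline{\sd_{E_{f_1}}}\bigr),
\]
and therefore
\[
\|H_{E_{f_2}}-\overline H_{E_{f_2}}\|_{L^2(\bd E_{f_2})}^2=\tfrac{1}{h^2}\|\sd_{E_{f_1}}-\overline{\sd_{E_{f_1}}}\|_{L^2(\bd E_{f_2})}^2\le \tfrac{1}{h^2}\|\sd_{E_{f_1}}\|_{L^2(\bd E_{f_2})}^2,
\]
since the mean minimizes the $L^2$ distance to constants. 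Finally, the second inequality in \eqref{stima dissipazione distanza^2} (applied with the roles of $E_{f_1}$ and $E_{f_2}$ interchanged, using symmetry of the argument in Lemma~\ref{lemma a priori estimates}) gives $\|\sd_{E_{f_1}}\|_{L^2(\bd E_{f_2})}^2\le \tfrac{2}{1-\eta}\mathcal D(E_{f_2},E_{f_1})$, closing the chain.

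The only delicate step is the change of base surface in the third inequality: one must verify that the Jacobian of the normal parametrization is two-sided bounded by explicit constants when $\|f_2\|_{C^1}\le \delta$, which is clear from \eqref{Jac_gen} but requires choosing $\delta$ small depending only on $\|B_E\|_\infty$. All the other passages are essentially algebraic once Lemma~\ref{lemma a priori estimates}, Theorem~\ref{teo alex}, and the Euler--Lagrange equation are available; so the real content of the lemma is that the quantitative Alexandrov-type estimate on $\bd E$ can be transported to $\bd E_{f_2}$ with constants that do not deteriorate as $f_2\to 0$.
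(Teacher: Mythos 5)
Your proposal is correct and follows essentially the same route as the paper: both chain the a priori estimate \eqref{stima dissipazione norma 2} (with $f\equiv 0$), the Alexandrov-type estimate of Theorem~\ref{teo alex} (hypotheses verified exactly as you describe via Remark~\ref{oss}), a Jacobian-controlled change of base surface, the Euler--Lagrange equation \eqref{diss-diss euler lagrange}, and finally \eqref{stima dissipazione distanza^2}. The only cosmetic difference is that you apply the mean-minimizing property of $L^2$ to the signed-distance side after invoking the Euler--Lagrange equation, whereas the paper applies it on the curvature side first (writing $\|H_{E_{f_2}}-\overline H_{E_{f_2}}\|_{L^2}\le\|H_{E_{f_2}}-\lambda\|_{L^2}$) before converting to $\sd_{E_{f_1}}$; the two are equivalent.
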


\rosso{
\begin{proof}
    By Theorem \ref{teo alex}, for $\delta$ sufficiently small, we get
    \begin{equation*}
        \begin{split}
            \|f_2\|_{L^2(\bd E)}^2 &\le C \|H_{E_{f_2}}-\overline{H}_{E_{f_2}}\|_{L^2(\bd E)}^2\le C\|H_{E_{f_2}}-\lambda \|_{L^2(\bd E)}^2\\
            &\le 2 C \|H_{E_{f_2}}-\lambda \|_{L^2(\bd E_{f_2})}^2= \frac{2C}{h^2} \int_{\bd E_{f_2}} \textnormal{sd}_{E_{f_1}}^2 \udH,
        \end{split}
    \end{equation*}
    where the third inequality follows by bounding the Jacobian of the change of variables by $2$ (see \eqref{Jac_gen}). By combining the previous inequalities with \eqref{stima dissipazione norma 2} and \eqref{stima dissipazione distanza^2}, we obtain the thesis.
\end{proof}
}

We are now able to prove our main result.
The proof relies on our previous result Proposition~\ref{teorema convergenza a meno di traslazioni}, however this time we have to show that the translations introduced converge to an appropriate translation $\xi$. To achieve this result, we will obtain in Step 1 some estimates on the dissipations along the flow by comparing the energy with a suitable competitor. Once the (exponential) decay of the dissipations is proved, the convergence of the translations follows (see Step~2). The last step is devoted to prove the exponential convergence of the sets.

\begin{proof}[Proof of Theorem \ref{teorema convergenza esponenziale}]
	Let $h^*>0$, $\delta^*>0$ and $(\tau_n)_{n \in\N}$ be given by Proposition \ref{teorema convergenza a meno di traslazioni}. Fix $h<h^*$ and set $E_n:=E_h^n$. We split the proof in three steps.\\
	\textbf{Step 1.} (Exponential convergence of dissipations) Testing the minimality of $E_n$ with $E_{n-1}$ we obtain
	\[ P(E_n)+\dfrac 1h\mathcal D(E_n,E_{n-1})\le P(E_{n-1}). \]
	Recalling that $P(E_n)\to P(E)$ and summing the previous inequality from $n+1$ to $+\infty$ we get
	\begin{equation}
		\sum_{k=n+1}^{+\infty} \dfrac 1h\mathcal D(E_k,E_{k-1})\le  P(E_n)-P(E).
		\label{confronto facile2}
	\end{equation}
	
	We will now construct a suitable competitor to estimate the dissipation at the step $n-1$ with the difference of perimeters. 
	Since, by Proposition \ref{teorema convergenza a meno di traslazioni}, we have
	\begin{equation}
		E_n+\tau_n\to E   \quad \text{in} \quad C^k \quad \forall k \in \N,
		\label{convergenza}
	\end{equation}
	the translated sets of the flow, for $n$ large enough, can be written as normal deformations of the set $E$, that is there exists $g_n : \bd E\to \R$ such that
	\[ E_n+\tau_n=E_{g_n}, \]
	where $E_{g_n}$ was defined in \eqref{normal_deformation}.
	The convergence \eqref{convergenza} then reads as $g_n\to 0$ in $C^k$ as $n\to \infty$. Let $\sigma_n$ be the translations introduced by Lemma \ref{lemma 3.8_AFM} with $E_n +\tau_n$ instead of $F$. From the convergence in $C^k$ of $E_n+\tau_n$ to $E$, we deduce that $\sigma_n \to 0$ as $n\to\infty$. Therefore, setting 
	\[F_n:=E_n+\tau_n+\sigma_n,\]
	we have that $F_n\to E$ in $C^k$  and $F_n=E_{f_n}$ with $f_n:\bd E\to\R$ satisfying
	\[ \left|\int_{\bd E} f_n\nu_E \udH\right|\le \delta\|f_n\|_{L^2(\bd E)}\quad \text{and}\quad \|f_n\|_{W^{2,p}(\bd E)}\le C \|g_n\|_{W^{2,p}(\bd E)}   \]
	for $p>N-1$. Consider now the competitor
	\[ \mathscr E_n:=E-\tau_{n-1}-\sigma_{n-1}. \]
	From the minimality of $E_n$ we easily deduce 
	\begin{equation}
		P(E_n)+\dfrac 1h \mathcal D(E_n, E_{n-1})\le  P(\mathscr E_n)+\dfrac 1h \mathcal D(\mathscr E_n,E_{n-1})=P(E)+\dfrac 1h\mathcal D(E,E_{n-1}+\tau_{n-1}+\sigma_{n-1})
		\label{confronto competitor}
	\end{equation}
	where we used the translational invariance of the dissipations. From Lemma \ref{lemma 3.5n} we obtain that the sequence  $E_{n-2}+\tau_{n-1}+\sigma_{n-1}$ converges in $C^k$ to the same limit of $E_{n-1}+\tau_{n-1}+\sigma_{n-1}$, that is to $E$. In particular, for $n$ large enough we can write $E_{n-2}+\tau_{n-1}+\sigma_{n-1}=E_\psi$ for a suitable function $\psi:\bd E\to \R$ (depending on $n$) and with $\|\psi\|_{C^1(\bd E)}$ small. From Lemma \ref{lemma 3.5} we can then estimate the right hand side of \eqref{confronto competitor} with
	\begin{align*}
		\mathcal D(E,E_{n-1}+\tau_{n-1}+\sigma_{n-1})=&\mathcal D(E,F_{n-1})=\mathcal D(E,E_{f_{n-1}})
		\le C\mathcal D(E_{f_{n-1}}, E_\psi)\\
		=&C\mathcal D(E_{n-1}+\tau_{n-1}+\sigma_{n-1},E_{n-2}+\tau_{n-1}+\sigma_{n-1})\\
		=&C\mathcal D(E_{n-1},E_{n-2}).
	\end{align*}
	From the previous inequality and \eqref{confronto competitor} we obtain
	\begin{equation}
		P(E_n)-P(E)=P(E_n)-P(\mathscr E_n)\le \dfrac Ch \mathcal D(E_{n-1},E_{n-2}).
		\label{risultato confronto competitor}
	\end{equation}
	Now, \eqref{confronto facile2} and \eqref{risultato confronto competitor} yield
	\begin{align*}
		\sum_{k=n-1}^{\infty} \dfrac 1h\mathcal D(E_k,E_{k-1})
		=& \sum_{k= n+1}^{\infty} \dfrac 1h\mathcal D(E_k,E_{k-1}) + \dfrac 1h\mathcal D(E_n,E_{n-1})+\dfrac 1h\mathcal D(E_{n-1},E_{n-2})\\
		\le& \dfrac {C+1}h \mathcal D(E_{n-1},E_{n-2})+\dfrac 1h\mathcal D(E_n,E_{n-1})\\
		\le& \dfrac{C+1}h \left( \mathcal D(E_{n-1},E_{n-2})+\mathcal D(E_n,E_{n-1})  \right).
	\end{align*}
	We then apply Lemma \ref{lemma 3.10} below (with $l=2$) to conclude 
	\begin{equation}
		\mathcal D(E_n,E_{n-1})\le \left( 1-\dfrac 1{C+1} \right)^{n/2} \left(  P(E_0)- P(E) \right).
		\label{convergenza dissipazioni}
	\end{equation}
	\\
	\textbf{Step 2.} (Exponential convergence of barycenters)
	Set 
	\begin{equation}
		b= \left( 1-\dfrac 1{C+1} \right)^{\frac 14}\in(0,1).
		\label{def b}
	\end{equation} 
	From \eqref{convergenza}  and Lemma \ref{lemma 3.5n} both the sequences $(E_n+\tau_n)_{n\in\N}$ and $(E_{n-1}+\tau_n)_{n\in\N}$ converge in $C^k$ to $E$. Therefore, for $n$ large enough, there exist some functions $f_{1,n},\, f_{2,n}\in C^k(\bd E)$ such that 
	\[ E_{n}+\tau_n=E_{f_{1,n}},\quad E_{n-1}+\tau_n=E_{f_{2,n}} \]
	and $\|f_{i,n}\|_{C^k(\bd E)}\to 0$ as $n\to \infty$ for $i=1,2$. From \eqref{stima baricentri} and \eqref{convergenza dissipazioni}  we can estimate for $n$ sufficiently large
	\begin{align*}
		|\text{bar}(E_n)- \text{bar}(E_{n-1})|&=|\text{bar}(E_n+\tau_n)- \text{bar}(E_{n-1}+\tau_n)|\\
		&=|\text{bar}(E_{f_{1,n}})- \text{bar}(E_{f_{2,n}})|\\
		&\le C\sqrt{\mathcal D(E_{f_{1,n}},E_{f_{2,n}})}=\sqrt{\mathcal D(E_n,E_{n-1})}\\
		&\le C\left( P(E_0)-P(E)  \right)^{1/2}b^n.
	\end{align*}
	In turn, the above estimate implies that $(\text{bar}(E_n))_{n\in\N}$ satisfies the Cauchy condition, thus the whole sequence admits a limit $\bar \xi\in\T^N$. Moreover, the convergence is exponentially fast in the sense that 
	\[ |\text{bar}(E_{f_{1,n}})-\bar \xi|\le \sum_{k=n+1}^\infty |\text{bar}(E_{f_{1,n}})-\text{bar}(E_{f_{2,n}})|\le C\left( P(E_0)-P(E)  \right)^{1/2}\dfrac {b^n}{1-b}  \]
	for $n$ large enough. 
	Recalling \eqref{convergenza} we thus conclude that there exists a suitable translation $\xi \in \T^N$ such that for every $k\in\N$  
	\[ E_n\to E-\xi \quad \text{in }C^k \quad \text{as }n\to\infty.\]
	\textbf{Step 3.} (Exponential convergence of the sets) By the previous step we can write, for $n$ large, the boundaries of the evolving sets as radial graphs of the limit set $E-\xi$. Precisely, for $n$ large enough there exist functions $f_n$ such that
	\begin{equation}
		E_n+\xi=E_{f_n}\quad \text{and}\quad \|f_n\|_{C^k(\bd E)}\to 0 \quad \text{as }n\to\infty.
		\label{convergenza deformazioni}
	\end{equation}
	From \eqref{stima dissipazione norma 2} and for $n$ large enough we have $\|f_n-f_{n-1}\|_{L^2(\bd E)}\le 2\sqrt{\mathcal D(E_n,E_{n-1})}$ and thus, recalling \eqref{convergenza dissipazioni} and arguing as in Step 2, we get
	\begin{equation}
		\|f_n\|_{L^2(\bd E)}\le \sum_{k=n+1}^\infty \|f_n-f_{n-1}\|_{L^2(\bd E)}\le \left(  P(E_0)-P(E)  \right)^{1/2} \dfrac {b^n}{1-b}
		\label{velocita convergenza}
	\end{equation}
	where $b$ is as in \eqref{def b}. The above estimate yields the exponential decay of the $L^2-$norms of the radial graphs. We recall the well-known Gagliardo-Nieremberg inequality: for every $j\in\N$ there exists $C>0$ such that, if $g$ is smooth enough on the boundary of a smooth set $E$, then 
	\begin{equation}
		\| D^k g \|_{L^2(\bd E)}\le C\| D^{2k}g\|^{1/2}_{L^2(\bd E)}\|g\|^{1/2}_{L^2(\bd E)} 
		\label{gagliardo-nieremberg}
	\end{equation}
	where $D^k$ stands for the collection of all the $k-$th order derivatives of $g$, see e.g. \cite[Theorem~3.70]{Aub}.
	Now, by \eqref{convergenza deformazioni} for every $k$ there exists $n_k$ such that $\sup_{n\ge n_k}\| D^{2k} f_n\|_{L^2(\bd E)}\le 1$, therefore we may apply \eqref{gagliardo-nieremberg} to $f_n$ to deduce from \eqref{velocita convergenza} that also $\| D^k f_n\|_{L^2(\bd E)}$ decays exponentially fast for all $k\in\N$. The Sobolev immersion Theorem then yields the exponential decay in $C^k$ for every $k$ thus completing the proof of the result.
\end{proof}

\begin{lemma}
	Let $(a_n)_{n \in \N}$ be a sequence of non-negative numbers. Assume furthermore that there exist $c>1,\ l\in\N$ such that $\sum_{n=k}^\infty a_n\le c\sum_{j=k}^{k+l-1}a_j$ for every $k\in\N$. Then 
	\[ a_k\le \left(  1+\dfrac 1c \right)^{\frac kl}S \]
	for every $k\in\N$, where  $S=\sum_{n=1}^\infty a_n.$ 
	\label{lemma 3.10}
\end{lemma}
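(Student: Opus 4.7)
The plan is to introduce the tail sums $T_k := \sum_{n \ge k} a_n$, so that $T_1 = S$ and $(T_k)$ is non-increasing in $k$ (since each $a_n \ge 0$). The hypothesis $\sum_{n=k}^\infty a_n \le c \sum_{j=k}^{k+l-1} a_j$ rewrites, using $\sum_{j=k}^{k+l-1} a_j = T_k - T_{k+l}$, as
\begin{equation*}
T_k \le c\,(T_k - T_{k+l}), \qquad k \in \N,
\end{equation*}
which rearranges to the one-step geometric contraction
\begin{equation*}
T_{k+l} \le \left(1 - \frac{1}{c}\right) T_k.
\end{equation*}

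I would then iterate this contraction starting from $k = 1$: by an immediate induction on $m \in \N \cup \{0\}$,
\begin{equation*}
T_{1+ml} \le \left(1 - \frac{1}{c}\right)^{m} S.
\end{equation*}
For an arbitrary $k \in \N$, write $k - 1 = m l + r$ with $0 \le r < l$ and $m = \lfloor (k-1)/l \rfloor$. Using the monotonicity of $(T_k)$ together with $a_k \le T_k$, one obtains
\begin{equation*}
a_k \le T_k \le T_{1+ml} \le \left(1 - \frac{1}{c}\right)^{m} S.
\end{equation*}
Since $m \ge (k-1)/l - 1 \ge k/l - 2$ and $0 < 1 - 1/c < 1$, this yields
\begin{equation*}
a_k \le \left(1 - \frac{1}{c}\right)^{-2} \left(1 - \frac{1}{c}\right)^{k/l} S,
\end{equation*}
that is, exponential decay at rate $(1 - 1/c)^{1/l}$, which is precisely the rate used (with $l = 2$ and $c = C+1$) in the derivation of \eqref{convergenza dissipazioni} in the proof of Theorem \ref{teorema convergenza esponenziale}.

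There is no genuine obstacle: the whole lemma reduces to the algebraic passage from the hypothesis to the contraction $T_{k+l} \le (1 - 1/c) T_k$; the only care needed is the bookkeeping of the remainder $r < l$ when interpolating between the indices $1 + ml$ via monotonicity of $T_k$ (and absorbing the resulting constant $(1 - 1/c)^{-2}$ into the statement, depending on which precise normalization of the exponential rate one wishes to display).
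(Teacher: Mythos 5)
Your proof is correct and provides the content of the lemma via an elementary, self-contained argument. The paper does not supply its own proof but defers to \cite[Lemma 3.11]{MPS}; the argument there is the same tail-sum iteration you use, so the approaches coincide. Two points deserve a remark. First, the base $(1+1/c)$ in the lemma as printed must be a misprint for $(1-1/c)$: as written the conclusion is trivially true, since $a_k\le S$ and $(1+1/c)^{k/l}\ge 1$, and moreover the lemma is invoked in \eqref{convergenza dissipazioni} precisely in the form $\bigl(1-\tfrac{1}{C+1}\bigr)^{n/2}$. Your derivation via the one-step contraction $T_{k+l}\le(1-1/c)T_k$ correctly produces the decaying rate. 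Second, you rightly note that the argument yields $a_k\le (1-1/c)^{-2}(1-1/c)^{k/l}S$ rather than the (typo-corrected) bound with no prefactor; such a multiplicative constant is in fact unavoidable, as the example $l=1$, $a_1=1$, $a_n=0$ for $n\ge 2$ shows: there $S=1$ and the hypothesis $\sum_{n\ge k}a_n\le c\,a_k$ holds for every $c>1$, yet $a_1=1>1-1/c$. A slightly sharper floor estimate $m=\lfloor(k-1)/l\rfloor\ge k/l-1$ reduces your prefactor from $(1-1/c)^{-2}$ to $c/(c-1)$, but some constant must remain. This is harmless for the paper, since only the exponential rate enters the proof of Theorem \ref{teorema convergenza esponenziale}.
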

The proof of the previous lemma can be found in \cite[Lemma 3.11]{MPS}.


\section{Two-dimensional case}
\label{Two-dimensional case}

In this section, we completely characterize the long-time behaviour of the discrete flow in dimension two.
This particular choice for the dimension is purely technical and can be justified as follows. In the two-dimensional flat torus we have a complete characterization of the critical points of the perimeter: they consist in unions of disjoint discs (having the same area) or in unions of disjoint lamellae (possibly having different areas), or their complements. It turns out that these sets are all strictly stable. This allows us to conclude that either the connected components of any limit point of the discrete flow or the ones of their complements are strictly stable sets. We remark that in higher dimension this could not be true anymore.

Fix $h$, $m>0$ and let $(E^n_h)_{n\in \N}$ be a flow with initial set $E_0 \subset \T^2$ such that $|E_0|=m$. We recall that, by Proposition \ref{proprieta flusso}, there exists $s_0>0$ such that the distance between the connected components of the set $E_h^n$ is at least $s_0$. Moreover, the proposition also provides a bound from below on the diameter of the connected components. Set
\[P_\infty:=\lim_n P(E_h^n)\]
as the limit of the monotone sequence of the perimeters along the discrete flow. 
Let $F$ be any possible limit point of the sequence $(E_n^h)_{n \in \N}$. We observe that if $F$ is a union of discs then its number of connected components must be $\pi^{-1} P_{\infty}^2/(4m)$ and therefore the form of the limit point is uniquely determinated up to translations. Analogously, if $F$ is the complement of a union of discs, $F^c$ is made of $\pi^{-1} P_\infty^2/(4-4m)$ connected components and thus it is uniquely determinated up to translations of its complement. In the case when $F$ is a union of lamellae the number of connected components is\rosso{, in general, less than or equal to} $P_\infty/2$, and we have no information on the area of the single components.

Since we will consider $h$ as a fixed parameter, from now on we will denote by $E_n$ the set $E_h^n$.

\begin{oss}[Remarks on the uniform $C^{1,\alpha}-$closeness to limit points]
	\label{remark scrittura lamelle}
	We remark that for every $ \varepsilon>0$ there exists $ n_0=n_0(\varepsilon)\in \N$ such that for every $n\ge n_0 $ it holds 
	\begin{equation}
		\label{scrittura insiemi flusso}
		|E_n\triangle \bigcup_{i=1}^{l_n} F_{i,n} |\le \varepsilon \quad \text{or} \quad |E_n^c\triangle \bigcup_{i=1}^{L_n} F_{i,n} |\le \varepsilon,
	\end{equation}
	where, in the first case, $\bigcup_{i=1}^{l_n} F_{i,n}$ is a union of disjoint lamellae or a union of disjoint discs, with $F_{i,n}$ having the same mass of the $i-$th connected component of $E_n$; $l_n$ is either \rosso{less than or equal to} $ P_\infty/2$ if $F_{i,n},\ i=1,\dots, l_n$, are lamellae or $l_n=\pi^{-1} P_{\infty}^2/(4m)$ if they are discs; in the second case, $\bigcup_{i=1}^{L_n} F_{i,n}$ is a union of disjoint discs, with $F_{i,n}$ having the same mass of the $i-$th connected component of $E_n^c$ and $L_n=\pi^{-1} P_{\infty}^2/(4-4m)$.
	This can be easily proved recalling that any subsequence of the flow admits a further subsequence converging in $L^1$ to a set of the aforementioned form.

	Moreover, the classical regularity theory of $\Lambda-$minimizers implies that the previous result can be improved. Consider, for the sake of simplicity, that $E_n$ satisfies the first inequality in \eqref{scrittura insiemi flusso}(the other case is analogous). Then one can prove that for every $ \varepsilon>0$ there exists $ n_0=n_0(\varepsilon)$ such that for every $n\ge n_0 $ it holds
	\begin{equation}
		E_n=\bigcup_{i=1}^{l_n} (F_{i,n})_{f_{i,n}} \quad \text{where} \quad f_{i,n}\in C^{1,\alpha}(\bd F_{i,n}), \ \|f_{i,n}\|_{C^{1,\alpha}(\bd F_{i,n})}\le \varepsilon.
		\label{scrittura lamelle}
	\end{equation}
\end{oss}

\begin{oss}\label{rmk lamellae}
    \rosso{In this remark, we identify $\T^2$ with the unit square $[0,1)^2.$
    We prove that for a fixed $M>0$ there exists a finite number of slopes such that, for any lamella $L$ having one of those slopes, we have $P(L)\le M$.}
    
    \rosso{Fix a lamella $L$. Let $a \subset \T^2$ be one of the two components of the boundary of $L$, and suppose that $(0,0) \in a$. Since $a$ is a closed curve in $\T^2$, by periodicity, the line in $\R^2$ passing through the origin and with the same slope of $a$ must  also pass through a point of the form $(p,q) \in \N \times \N$ with $p,q$ coprime or equal to $(0,1)$ or $(1,0)$.  We then remark that the length in $\T^2$ of $a$ is equal to the one of the segment between the origin and $(p,q)$, that is length$(a)=|(p,q)|.$ 
    \begin{figure}
        \centering
        \includegraphics[scale=0.4]{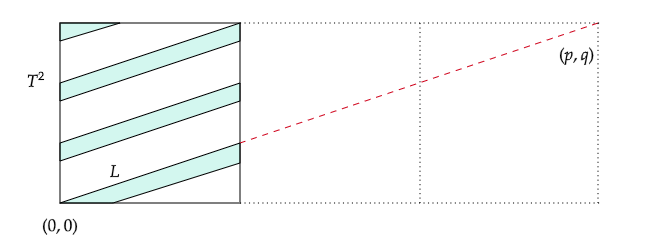}
        \caption{The lamella $L$ in light blue, the line $a$  dashed in red.}
    \end{figure}
    Since $P(L)=2\,\text{length}(a)$, in order to have $P(L) \le M$, the point $(p,q)$ must be contained in the disc of radius $M/2$.
    Our claim follows since in the disc of radius $M/2$ there is a finite number of points belonging to $\N \times \N$.}
\end{oss}

In the following lemma we characterize the geometric form of any limit point of the discrete flow.

\begin{lemma}[Uniqueness of the form of the limit]
	\label{limite unico}
	Fix $h$, $ m>0$ and an initial set $ E_0 \subset \T^2$ with mass $m$.
	Let $(E_n)_{n \in \N}$ be a discrete flow starting from $E_0$. 
	Then either one of the following holds:
	\begin{itemize}
		\item[i)] the limit points of the flow are disjoint unions of $l$ discs of total area $m$, where $l=\pi^{-1} (4m)^{-1}P_{\infty}^2$ belongs to $\N$,
		\item[ii)] the limit points of the flow are the complement of disjoint unions of $l$ discs of total area $1-m$, where $l=\pi^{-1} (4-4m)^{-1}P_{\infty}^2$ belongs to $\N$.
		\item[iii)] \rosso{the limit points of the flow are disjoint unions of $l$ lamellae of total area $m$, with the same slope and $l \le P_\infty/2$. Moreover, the equality $l=P_{\infty}/2 \in \N$ holds if and only if the limit is given by vertical or horizontal lamellae.}
	\end{itemize}
\end{lemma}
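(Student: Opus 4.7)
The plan is to proceed in three stages: identify subsequential $L^1$-limits of $(E_n)$ as critical sets of the perimeter, classify such critical sets in $\T^2$, and use a connectedness argument on the $\omega$-limit set to show that all limit points share the same type (and, in case (iii), the same slope).

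\emph{Subsequential limits are critical.} Following Step 1 of the proof of Proposition \ref{teorema convergenza a meno di traslazioni}, I would use the uniform $\Lambda$-minimality (Proposition \ref{proprieta flusso}), the uniform convergence of signed distances along the flow (Lemma \ref{lemma 3.5n}), and passage to the limit in the Euler--Lagrange equation \eqref{euler-lagrange equation} to show that any $L^1$-subsequential limit $F$ is a fixed point of the discrete flow with $|F|=m$, hence a critical set of the perimeter satisfying $P(F)=P_\infty$. Since $N=2$, $\Lambda$-minimizers have empty singular set and $F$ is smooth.

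\emph{Classification of smooth critical sets in $\T^2$.} Each connected component of $\bd F$ is a smooth closed curve in $\T^2$ of the same constant curvature $\lambda\in\R$. Lifting to $\R^2$, a component is either a circle (yielding a disc component of $F$ or of $F^c$) or a straight line of rational slope which closes up into a periodic geodesic (yielding a lamella). Since all components of $\bd F$ share the curvature $\lambda$, either all are circles of one common radius, all are parallel closed geodesics, or $F^c$ is a union of circles of equal radii. The isoperimetric identities $P(F)=2\sqrt{\pi l m}$ and $P(F)=2\sqrt{\pi l (1-m)}$ give the numerology in (i) and (ii); in case (iii), if the slope is realized by $(p,q)\in\N\times \N$ coprime (or $(0,1),(1,0)$) as in Remark \ref{rmk lamellae}, then $P(F)=2l\sqrt{p^2+q^2}\geq 2l$, yielding $l\leq P_\infty/2$ with equality iff the slope is horizontal or vertical.

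\emph{Uniqueness of type along the flow.} Summing the minimality inequality $P(E_n)+h^{-1}\mathcal D(E_n,E_{n-1})\le P(E_{n-1})$ gives
\[\sum_n \mathcal D(E_n,E_{n-1})\leq h(P(E_0)-P_\infty)<\infty,\]
so $\mathcal D(E_n,E_{n-1})\to 0$. For $n$ large, Remarks \ref{remark scrittura lamelle} and \ref{rmk lamellae} write $E_n$ (or $E_n^c$) as a $C^{1,\alpha}$ normal deformation of one of finitely many templates; Lemma \ref{lemma a priori estimates} then upgrades $\mathcal D(E_n,E_{n-1})\to 0$ to $|E_n\triangle E_{n-1}|\to 0$. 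By the uniform perimeter bound $(E_n)$ is $L^1$-relatively compact, and the classical fact that a sequence in a compact metric space with vanishing consecutive distances has connected $\omega$-limit set shows that the set of limit points is $L^1$-connected. On the other hand, by the classification above the set of admissible critical configurations with mass $m$ and perimeter $P_\infty$ decomposes into finitely many closed, pairwise $L^1$-separated pieces labelled by type (discs, complements of discs, or lamellae) and, in the lamella case, by slope. Connectedness then forces all limit points to lie in a single piece, which is precisely the statement of the lemma.

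\emph{Main obstacle.} The delicate step is passing from $\mathcal D(E_n,E_{n-1})\to 0$ to $|E_n\triangle E_{n-1}|\to 0$ in the transitional regime where consecutive terms might be $C^{1,\alpha}$-close to different templates. My plan is to exploit the uniform density lower bounds of Proposition \ref{proprieta flusso}(i) together with the $L^1$-separation of templates to argue that any "template jump" between consecutive steps costs a definite positive amount of dissipation; summability then rules out infinitely many such jumps, so the template stabilizes from some index onward and the quantitative estimate of Lemma \ref{lemma a priori estimates} applies on the tail of the flow.
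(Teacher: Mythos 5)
Your proposal takes a genuinely different route in the crucial third stage. The first two stages (criticality of subsequential limits and classification of smooth constant-curvature sets in $\T^2$) match the paper's setup via Proposition \ref{teorema convergenza a meno di traslazioni} and Remarks \ref{remark scrittura lamelle}/\ref{rmk lamellae}. Where you diverge is in controlling consecutive steps: the paper does not argue via dissipation summability at all. Instead it argues by contradiction that $\ud_H(E_{n-1},E_n)\to 0$: along any subsequence with $\ud_H(E_{n_k-1},E_{n_k})>\varepsilon$ one extracts $E_{n_k}\to F$ in $L^1$, and then invokes Lemma \ref{lemma 3.5n} to get $\sd_{E_{n_k-1}}\to\sd_F$ \emph{uniformly}, which already contradicts the Hausdorff separation $\eqref{distanza palle-lamelle}$ by the triangle inequality. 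Your dissipation-based alternative is attractive because it sidesteps Lemma \ref{lemma 3.5n}, but it has a subtle asymmetry you should be aware of.

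The gap lies in the jump-cost claim. You want: if $E_{n-1}$ is $C^{1,\alpha}$-close to a template of type $T_1$ and $E_n$ to a template of type $T_2\neq T_1$, then $\mathcal D(E_n,E_{n-1})\ge c>0$. The dissipation weights by $\dist_{\bd E_{n-1}}$, so the relevant separation is whether there is a point of $\bd E_n$ far from $\bd E_{n-1}$; around such a point the density estimates of Proposition \ref{proprieta flusso}(i) applied to $E_n$ produce a definite volume of $E_n\triangle E_{n-1}$ at definite distance from $\bd E_{n-1}$, and the lower bound follows. However, the separation $\eqref{distanza palle-lamelle}$ is a two-sided Hausdorff statement and a priori may be realized only by a point of $\bd E_{n-1}$ far from $\bd E_n$; in that case the symmetric difference mass produced by the density estimate sits \emph{close} to $\bd E_{n-1}$, where the weight $\dist_{\bd E_{n-1}}$ is small, and your estimate does not close. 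For the specific finite family of templates (discs, complements of discs, lamellae of the finitely many admissible slopes, all with fixed volume $m$ and perimeter $P_\infty$) one can check that both one-sided excesses are bounded below, but this needs a separate geometric argument that you did not supply. Lemma \ref{lemma 3.5n} bypasses this entirely by upgrading $L^1$-convergence of $E_{n_k}$ to uniform convergence of $\sd_{E_{n_k-1}}$, which controls exactly the right direction. Finally, as a minor structural point: once you have ruled out infinitely many type-jumps, the statement of the lemma is already proved (all limit points are of the stabilized type), and your subsequent appeal to connectedness of the $\omega$-limit set via $|E_n\triangle E_{n-1}|\to 0$ and Lemma \ref{lemma a priori estimates} is redundant for this lemma (though it is in the spirit of what is actually needed later for Theorem \ref{teorema convergenza dimensione 2}).
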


\begin{proof}
	We first employ a compactness argument and then use Lemma \ref{lemma 3.5n} to conclude. We start by fixing some notation. We denote by
	\begin{equation}
		\label{def unione palle}
		\mathscr E_B:= \bigcup_{i=1}^{l_B} B_i
	\end{equation}
	any disjoint union of $l_B=4^{-1}\pi m^{-1}P_{\infty}^2 $ discs each having radius $2m/P_{\infty}$; we denote by
	\begin{equation}
		\label{def unione palle compl}
		\mathscr E_{B^c}:= \left (\bigcup_{i=1}^{l_{B^c}} B_i\right )^c 
	\end{equation}
	the complement of any disjoint union of $l_{B^c}=4^{-1}\pi (1-m)^{-1}P_{\infty}^2 $ discs, each of radius $2(1-m)/P_{\infty}$; we denote by 
	\begin{equation}
		\label{def unione lamelle}
		\mathscr E_L:= \bigcup_{i=1}^{l_L} L_i
	\end{equation}
	any disjoint union of $l_L\rosso{\le\  }P_{\infty}/2$ lamellae \rosso{having the same slope} (\rosso{and} possibly having different masses). We remark that, for every fixed $P_\infty$ and $m$, the following holds
	\begin{equation}
		i:=\inf\{ \ud_H( \mathscr E_B, \mathscr E_L)\wedge \ud_H( \mathscr E_{B^c}, \mathscr E_L) \wedge \ud_H( \mathscr E_{B^c}, \mathscr E_B) \ :\ \mathscr E_L, \mathscr E_B, \mathscr E_{B^c} \text{ as above}  \}>0,
		\label{distanza palle-lamelle}
	\end{equation}
	\rosso{This is clear if we compare the families $\mathscr E_B, \mathscr E_{B^c}$ and a union of lamellae having the same slope. Since, by  Remark \ref{rmk lamellae}, there is a finite number of possible slopes for the lamellae, we conclude \eqref{distanza palle-lamelle}.} From Remark \ref{remark scrittura lamelle} the discrete flow is eventually $C^1-$close to a limit point of the form $\mathscr E_L, \mathscr E_B$ or $\mathscr E_{B^c}$.  Assume now by contradiction that the flow does not converge to a fixed configuration. Then, without loss of generality, we can assume that for every $0<\varepsilon<i/3$ there exist infinitely many indexes such that
	\[ \ud_H(E_{n-1},\mathscr E_B)\le \varepsilon \quad \text{and }\quad \ud_H(E_{n},\mathscr E_L)\le \varepsilon.\]
	Therefore we get
	\[ \ud_H(\mathscr E_B,\mathscr E_L)\le \ud_H(\mathscr E_B, E_{n-1})+\ud_H(\mathscr E_L,E_n)+\ud_H(E_n,E_{n-1})\le 2\varepsilon+\ud_H(E_n,E_{n-1}).  \]
	To reach the contradiction (compare \eqref{distanza palle-lamelle}), it is enough to show that  for every $ \varepsilon>0$ there exists $  n_0=n_0(\varepsilon)$ such that for every $n\ge n_0 $ it holds
	\begin{equation}
		\ud_H (E_{n-1},E_n)\le \varepsilon.
		\label{lemma 3.5 implica}
	\end{equation}
	Assume by contradiction the existence of a subsequence $n_k$ along which the flow satisfies
	\[\ud_H (E_{n_k-1},E_{n_k})> \varepsilon.\]
	Up to a further subsequence, $E_{n_k}\to F$, with $F$ being a set of the form $\mathscr E_B, \mathscr E_L $ or $\mathscr E_{B^c}$. But then Lemma \ref{lemma 3.5n} implies $ \sd_{E_{n_k-1}}\to \sd_F$ uniformly, which is clearly a contradiction.

 \rosso{Finally, we observe that in case $iii)$ the number of connected component is given by $\frac{P_{\infty}}{2|(p,q)|}$, where we used the same notation of Remark \ref{rmk lamellae}. Thus, $l=P_\infty/2$ if and only if $(p,q)$ is equal to $(0,1)$ or to $(1,0)$
 that means that the lamella is either vertical or horizontal.}
\end{proof}

Thanks to the previous lemma we can then conclude the proof of Theorem \ref{teorema convergenza dimensione 2},  the main result of this section.  While the proofs of assertions $i)$ and $ii)$ of Theorem \ref{teorema convergenza dimensione 2} are similar to the one of \cite[Theorem~3.4]{MPS}, the third one is slightly different, the main issue being that we can not fix the mass of the connected components of the limiting configuration. We will prove nonetheless the exponential convergence of the dissipations that, in turn, yields the convergence of the mass of the connected components of the flow. We start by a simple remark.

\begin{oss}[$C^{1,\alpha}$-closeness to lamellae]
	\label{C^1 closeness for lamellae}
	Let $\varepsilon>0$. Consider two lamellae $L_1, L_2$ \rosso{having the same slope,} possibly having different area and two $C^{1,\alpha}-$deformations $E_1, E_2$, respectively, of $L_1$ and $L_2$. 
	Suppose also that 
	\[\dist_{C^{1,\alpha}}(E_i,L_i)\le \varepsilon, \quad i=1,2. \]
	Then the closeness in $L^{\infty}$ of $E_1$ and $E_2$ implies that $E_2$ and $L_1$ are close in $C^{1,\alpha}$. Indeed, we first remark that
	\[\dist_{C^{1,\alpha}}(L_2,L_1)=\dist_{L^{\infty}}(L_2,L_1) \]
	since the components of the boundaries of $L_1$ and $L_2$ differ only by a translation.
	Moreover, the hypothesis $\dist_{L^\infty}(E_1,E_2)\le \varepsilon$ implies $\dist_{L^{\infty}}(L_2,L_1)\le 2\varepsilon$.
	Now, let $f_2$ be a suitable function such that $E_2=(L_2)_{f_2}$, then $\|f_2\|_{C^{1,\alpha}(\bd L_2)}\le \varepsilon$ and there exists a constant $|c|\le \dist_{L^{\infty}}(L_1,L_2)\le 2\varepsilon$ such that $E_2=(L_1)_{f_2+c}$.
	Therefore we obtain
	\begin{align*}
		\dist_{C^{1,\alpha}}(E_2,L_1)=\|f_2+c\|_{C^{1,\alpha}(\bd L_1)}\le \|f_2\|_{C^{1,\alpha}(\bd L_2)}+|c| \le \varepsilon+2\varepsilon= 3\varepsilon.
	\end{align*}
\end{oss}

\begin{proof}[Proof of Theorem \ref{teorema convergenza dimensione 2}]
	By Lemma \ref{limite unico}, we can assume that all the limit points of the flow are sets either of the form $\mathscr E_B$, $\mathscr E_{B^c}$ or $\mathscr E_L$ (see \eqref{def unione palle}, \eqref{def unione palle compl}, \eqref{def unione lamelle}).
	To conclude we need to prove that the whole sequence converges in $C^k$ and exponentially fast to a unique configuration.

	In the case when the limit points are of the form $\mathscr E_B$, the proof follows the same spirit of \cite[Theorem 3.4]{MPS}, but it is easier since we work in a compact space. The case when the limit points are of the form $\mathscr E_{B^c}$ is at all analogous: we simply remark that, if $F$ is a minimizer of \ref{def funzionale}, then its complement is a minimizer of the same problem with $E^c$ instead of $E$ and with $1-m$ instead of $m$. By studying the evolution of the complement of the discrete flow, we can conclude as before.

	Now, suppose that the limit points are of the form $\mathscr E_L$.
	We begin by observing that any subsequence of the flow admits a further subsequence converging in $L^1$ to a union of disjoint lamellae. Firstly, we prove the exponential decay of the dissipations. Testing the minimality of $E_n$ with $E_{n-1}$ we obtain
	\[ P(E_n)+\dfrac 1h\mathcal D(E_n,E_{n-1})\le P(E_{n-1}). \]
	Summing for $s\ge n+1$ we have
	\begin{equation}
		\sum_{s=n+1}^{+\infty} \dfrac 1h\mathcal D(E_s,E_{s-1})\le  P(E_n)-P_\infty.
		\label{confronto facile}
	\end{equation}
	With the notation previously introduced, for every $\varepsilon$ we can choose $n$ large enough such that \eqref{scrittura lamelle} holds. Let $F_{i,n}$ be the sets given by \eqref{scrittura lamelle}: by Lemma \ref{limite unico}, we know that $F_{i,n}$, $i=1, \ldots, l_n$, are eventually lamellae and  \rosso{$l_n=l\ge P_\infty/2$}.
	
	We will now construct a suitable competitor to estimate the dissipation at the step $n-1$ with the difference of the perimeters. For $n$ large enough consider the competitor
	\[ \mathscr L_n=\bigcup_{i=1}^{l}    F_{i,n-1}. \]
	We remark that, by definition and for $n$ large enough, this competitor has perimeter $P(\mathscr L_n)=P_\infty$. 
	By Proposition \ref{prop regularity discrete flow}, there exists $s_0=s_0(m,h,N, E_0)>0$ such that the connected components $E_{i,n}$ of $E_n$ satisfy
	\begin{equation*}
		\dist \left(E_{i,n}, E_{j,n}  \right)\ge s_0
	\end{equation*}
	for every $i\neq j$, moreover Remark \ref{remark scrittura lamelle} ensures that 
	\[ \dist \left( F_{i,{n-1}},F_{j,{n-1}}   \right)\ge \frac{s_0}2 \]
	holds for $n$ large enough and $i\neq j$. Thus, we can localize the dissipations
	\begin{align}
		\mathcal D(E_n,E_{n-1})&=\sum_{i=1}^l \mathcal D(E_{i,n},E_{i,n-1}),\label{eq 3.25-1}\\
		\mathcal D(\mathscr L_n,E_{n-1})&=\sum_{i=1}^l \mathcal D(F_{i,n-1},E_{i,n-1}).
		\nonumber
	\end{align}
	Testing the minimality of $E_n$ with $\mathscr L_n$ and using the previous equality we have
	\begin{equation}
		P(E_n)+\frac 1h \mathcal D(E_n,E_{n-1})\le P(\mathscr L_n) +\frac 1h \sum_{i=1}^l \mathcal D(F_{i,n-1}, E_{i,n-1}).
		\label{eq 3.26}
	\end{equation}
	Recalling Remark \ref{C^1 closeness for lamellae} and equations \eqref{scrittura lamelle} and \eqref{lemma 3.5 implica}, we then obtain that the connected components of both $E_{n-1}$ and $E_{n-2}$ are small normal $C^{1,\alpha}-$deformations of the connected components of $ \mathscr L_{n-1}$. Thus we can assume that both $E_{i,n-1}$ and $E_{i,n-2}$ can be described as normal deformation of $F_{i,n-1}$ for $i=1,\dots,k$. Let $f_{i,n-1}$ and $f_{i,n-2}$ be the functions (having small $C^{1,\alpha}-$norms) that describe respectively these deformations. 
	Now, recalling Lemma \ref{lemma 3.5}, we can estimate
	\begin{align*}
		\mathcal D(F_{i,n-1}, E_{i,n-1} )=& \mathcal D(F_{i,n-1},(F_{i,n-1})_{f_{i,n-1}}  )
		\le  C \mathcal D((F_{i,n-1})_{f_{i,n-1}}, (F_{i,n-1})_{f_{i,n-2}}  )\\
		=&C \mathcal D(E_{i,n-1}, E_{i,n-2} ).
	\end{align*}
	Thus, from equations  \eqref{eq 3.25-1} and \eqref{eq 3.26} we get 
	\[  P(E_n)-P_\infty= P(E_n)-P(\mathscr L_n)\le \frac Ch\sum_{i=1}^l \mathcal D(E_{i,n-1},E_{i,n-2})=\frac Ch \mathcal  D(E_{n-1},E_{n-2})\]
	and then \eqref{confronto facile} clearly yields
	\begin{align*}
		\sum_{s=n-1}^{\infty} \dfrac 1h\mathcal D(E_s,E_{s-1})
		=& \sum_{s= n+1}^{\infty} \dfrac 1h\mathcal D(E_s,E_{s-1}) + \dfrac 1h\mathcal D(E_{n-1},E_{n-2})+\dfrac 1h\mathcal D(E_{n},E_{n-1})\\
		&\le P(E_n)-P_\infty+\dfrac 1h\mathcal D(E_{n-1},E_{n-2})+\dfrac 1h\mathcal D(E_{n},E_{n-1})\\
		&\le \frac {C+1}h \mathcal D(E_{n-1},E_{n-2})+\dfrac 1h\mathcal D(E_{n},E_{n-1})\\
		&\le  \left(\frac {C+1}h \mathcal D(E_{n-1},E_{n-2})+\dfrac 1h\mathcal D(E_{n},E_{n-1}) \right).
	\end{align*}
	We can then conclude using the same arguments of \cite[Theorem 3.4]{MPS}. 
\end{proof}


\appendix
\section{Appendix}
We present the proof of Lemma \ref{lemma a priori estimates} for the reader's convenience. It is slightly different from the proof of \cite[Lemma 3.8]{MPS}.

\begin{proof}[Proof of Lemma \ref{lemma a priori estimates}]
	The proof of equations \eqref{stima dissipazione norma 2} and \eqref{stima dissipazione distanza^2} are quite analogous to the corresponding ones in \cite{MPS}. We recall it for the sake of completeness and to highlight the minor differences between the two versions.

	We start by observing that for any $\eta'>0$, if $\delta$ is sufficiently small, then for every $p_0\in\bd E_{f_2}$ the boundary of $E_{f_2}$ in a small disc must be contained in a cone 
	\begin{equation}
		\bd E_{f_2}\cap B_{4\bar\delta}(p_0)\subset G:=\left\lbrace  y\in\R^N\ :|(y-p_0)\cdot \nu_E(p_0)|^2\le \dfrac{\eta'^2}{1+\eta'^2} |y-p_0|^2  \right\rbrace. 
		\label{cono}
	\end{equation}
	We then divide the rest of the proof into two steps.\\
	\textbf{Step 1.} If $\delta$ is small enough , for every point $p=\lambda p_0\in B_{2\delta}(p_0)\ (\lambda>0),$ we have that 
	\[ \dfrac 1{1+\eta'}|p-p_0|\le \dist(p,\bd E_{f_2})\le |p-p_0|.  \]
	Indeed the second inequality is trivial by definition, since $p_0\in\bd E_{f_2}$. Concerning the first one, set $q\in\bd E_{f_2}$ such that $\dist(p,\bd E_{f_2})=|p-q|$, in particular $|p-q|\le |p-p_0|\le 2\delta$. From \eqref{cono} we infer that $q\in G$ and thus we have 
	\[  \dist(p,\bd E_{f_2})\ge \dist(p,G)=\dfrac 1{\sqrt{1+\eta'^2}}|p-p_0|\ge \dfrac 1{1+\eta'}|p-p_0| \]
	where we used the explicit formula for the projection of a point on a cone. If $p_0:=s+f_2(s)\nu_E(s)\in\bd E_{f_2}$ with $s\in \bd E$, we set
	\[ p_t:=p_0+t\dfrac {f_1(s)-f_2(s)}{|f_1(s)-f_2(s)|}\nu_E(s)\quad \text{for all } t\in [0,c|f_1(s)-f_2(s)|]  \]
	for an appropriate constant $c$ such that the quantities defined are regular. We deduce that 
	\begin{equation}
		\dfrac 1{1+\eta'}t\le \dist(p_t,\bd E_{f_2})\le t.
		\label{stima cose}
	\end{equation}
	Keeping the same notation and using the coarea formula (also recall \eqref{quiquo}), we infer that 
	\begin{align}
		\mathcal D(E_{f_1},E_{f_2})&=\int_{E_{f_1}\triangle E_{f_2}}\dist(x,\bd E_{f_2})\ud x\nonumber\\
		&=\int_{\bd E}\udH(s)\int_0^{c|f_1(s)-f_2(s)|} \dist(p_t, \bd E_{f_2})J\Phi(s,t) \ud t\nonumber \\
		&=\int_{\bd E}\udH(s)\int_0^{c|f_1(s)-f_2(s)|} \dist(p_t,\bd E_{f_2}) \ud t \nonumber\\ 
		&\quad+\int_{\bd E}\udH(s)\int_0^{c|f_1(s)-f_2(s)|}  \dist(p_t,\bd E_{f_2})(J\Phi(s,t)-1) \ud t.
		\label{stima cose 2}
	\end{align}
	Recalling that for every $s \in \bd E$ we have that $J\Phi(s,\cdot)-1$ is Lipschitz continuous with constant $H_E$, for $\delta $ small enough and using \eqref{stima cose}, we get
	\begin{align}
		\mathcal D(E_{f_1},E_{f_2})&\le (1+\delta H_E)\int_{\bd E} \udH(s) \int_0^{c|f_1(s)-f_2(s)|}t\ud t\\
		&=\dfrac {1+\delta H_E}2 c^2\int_{\bd E}|f_1(s)-f_2(s)|^2 \udH(s) ,   \label{stima cose 3}
	\end{align}
	from which the second inequality in \eqref{stima dissipazione norma 2} follows by taking  $\delta $ small enough. On the other hand, by \eqref{stima cose} we also have
	\begin{align}
		\mathcal D(E_{f_1},E_{f_2})&\ge \dfrac{1-\delta H_E}{1+\eta'}\int_{\bd E} \udH(s) \int_0^{c|f_1(s)-f_2(s)|}t\ud t\nonumber\\
		&=\dfrac {1-\delta H_E}{1+\eta'}c^2\int_{\bd E}|f_1(s)-f_2(s)|^2 \udH(s),
		\label{stima cose 4}
	\end{align}
	from which the first inequality in \eqref{stima dissipazione norma 2} follows by taking $\eta'$ and  $\delta $ small enough.\\
	\textbf{Step 2.} The inequalities \eqref{stima dissipazione distanza^2} and \eqref{stima baricentri} are now easy consequences. Indeed, by \eqref{stima cose} we have that, for every $p_1=(1+f_1(s))\nu_E(s)\in\bd E_{f_1}$, it holds
	\[ \dfrac c{1+\eta'}|f_1(s)-f_2(s)|\le \dist(p_1,E_{f_2})\le c|f_1(s)-f_2(s)|.  \]
	Therefore \eqref{stima dissipazione distanza^2} follows from \eqref{stima cose 3} and \eqref{stima cose 4}, by taking $\eta'$ and $\delta$ smaller if needed and using the same  change of coordinates used previously (recall that $J\Phi$ and its inverse are estimated from above by $1+C\delta$ for a suitable constant $C>0$).
	
	Finally, we prove \eqref{stima baricentri}. For $\delta$ small enough, we can bound the Jacobian by $2$ and therefore we obtain
	\begin{align*}
		|\text{bar}(E_{f_1})-\text{bar}(E_{f_2})|\,|E|&=\left| \int_{E_{f_1}\setminus E_{f_2}}x\ud x -\int_{E_{f_2}\setminus E_{f_1}}x \ud x \right|\\
		&=\bigg| \int_{\bd E\cap \{f_1>f_2 \}}\udH(s)\int_{f_2(s)} ^{f_1(s)} (s+t\nu_E(s))J\Phi(s)  \ud t  \\
		&\,\,\, - \int_{\bd E\cap \{f_1<f_2\} }\udH(s)\int_{f_1(s)} ^{f_2(s)} (s+t\nu_E(s))J\Phi(s) \ud t \bigg|\\
		&\le 2\left |\int_{\bd E} \big(2s+(|f_1(s)|+|f_2(s)|) \nu_E(s)\big)\,|f_1(s)-f_2(s)|\udH(s)\right|\\
		&    \le C\|f_1-f_2\|_{L^2}
	\end{align*}
	and the conclusion follows from \eqref{stima dissipazione norma 2}.
\end{proof}


\subsection*{Data availability statement}
Data sharing is not applicable to this article as no datasets were generated or analysed during the current study.

\newpage


\end{document}